\newcommand{\citep}{\cite}
\newtheorem{theorem}{Theorem}[section]
\newtheorem{lemma}[theorem]{Lemma}
\newtheorem{assumption}[theorem]{Assumption}
\DeclarePairedDelimiter\floor{\lfloor}{\rfloor}
\renewcommand{\S}{\mathbb S}
\newcommand{\R}{\mathbb{R}}
\DeclareMathOperator*{\E}{\mathbb{E}}
\newcommand{\Var}{\text{Var}}
\newcommand{\sign}{\text{sign}}
\newcommand{\eps}{\varepsilon}
\newcommand{\1}{\mathbbm{1}}
\newcommand{\inner}[1]{\langle#1\rangle}
\newcommand{\wh}{\widehat}
\DeclarePairedDelimiterX{\abs}[1]{|}{|}{#1}
\DeclarePairedDelimiterX{\norm}[1]{\lVert}{\rVert}{#1}
\DeclarePairedDelimiterX{\kl}[2]{D_{KL}(}{)}{%
  #1\;\delimsize\|\;#2%
}
\title{Beyond Catoni: Sharper Rates for Heavy-Tailed and Robust Mean Estimation}
\author{
    Shivam Gupta\\
    UT Austin\\
    \texttt{shivamgupta@utexas.edu}\\
    \and
    Samuel B. Hopkins\\
    MIT\\
    \texttt{samhop@mit.edu}\\
    \and Eric Price\\
    UT Austin\\
    \texttt{ecprice@cs.utexas.edu}
    }
\begin{document}
\maketitle
\thispagestyle{empty}

\begin{abstract}%
        We study the fundamental problem of estimating the mean of a $d$-dimensional distribution with covariance $\Sigma \preccurlyeq \sigma^2 I_d$ given $n$ samples. When $d = 1$, \cite{catoni} showed an estimator with error $(1+o(1)) \cdot \sigma \sqrt{\frac{2 \log \frac{1}{\delta}}{n}}$, with probability $1 - \delta$, matching the Gaussian error rate. For $d>1$, a natural estimator outputs the center of the minimum enclosing ball of one-dimensional confidence intervals to achieve a $1-\delta$ confidence radius of $\sqrt{\frac{2 d}{d+1}} \cdot \sigma \left(\sqrt{\frac{d}{n}} + \sqrt{\frac{2 \log \frac{1}{\delta}}{n}}\right)$, incurring a $\sqrt{\frac{2d}{d+1}}$-factor loss over the Gaussian rate. When the $\sqrt{\frac{d}{n}}$ term dominates by a $\sqrt{\log \frac{1}{\delta}}$ factor, \cite{lee2022optimal-highdim} showed an improved estimator matching the Gaussian rate. This raises a natural question: Is the $\sqrt{\frac{2 d}{d+1}}$ loss \emph{necessary} when the $\sqrt{\frac{2 \log \frac{1}{\delta}}{n}}$ term dominates?

    We show that the answer is \emph{no} -- we construct an estimator that improves over the above naive estimator by a constant factor. We also consider robust estimation, where an adversary is allowed to corrupt an $\eps$-fraction of samples arbitrarily: in this case, we show that the above strategy of combining one-dimensional estimates and incurring the $\sqrt{\frac{2d}{d+1}}$-factor \emph{is} optimal in the infinite-sample limit.
\end{abstract}

\newpage

\thispagestyle{empty}

\tableofcontents

\newpage

\setcounter{page}{1}

\section{Introduction}
Mean estimation is perhaps the simplest statistical estimation
problem: given samples $x_1,\ldots,x_n \sim D$ for some $d$-dimensional probability
distribution $D$, estimate the mean $\mu$ of $D$.  If $x$ is
Gaussian with covariance $\Sigma \preceq \sigma^2 I_d$, then the empirical mean
is the optimal estimator.  It satisfies
\begin{align}\label{eq:gaussianerror}
  \norm{\wh{\mu} - \mu} \leq \sigma \left(\sqrt{\frac{d}{n}} + \sqrt{\frac{2 \log\frac{1}{\delta}}{n}}\right)
\end{align}
with probability $1-\delta$.  Even if $x$ is not Gaussian, for any
fixed ($D, d, \delta$), as $n \to \infty$ the central limit theorem
shows that the empirical mean achieves the Gaussian
rate~\eqref{eq:gaussianerror}.
But when the distribution, dimension,
or failure probability can vary with $n$, more sophisticated
estimators are needed to get good rates.  If the distribution has
outliers---large, rare events---the empirical mean can perform
very badly.

In \emph{one} dimension, the Median-of-Means estimate \citep{nemirovskij1983problem,jerrum1986random,alon1996space} is the classic way to
get subgaussian rates with minimal assumptions on the distribution. For
\emph{any} 1-dimensional distribution $D$ of variance
$\sigma^2$, the median (over $\Theta(\log \frac{1}{\delta})$ batches)
of means (of $\Theta\left(\frac{n}{\log \frac{1}{\delta}}\right)$ samples per batch) satisfies
\[
  \abs{\wh{\mu} - \mu} \leq O\left( \sigma \cdot \sqrt{\frac{\log\frac{1}{\delta}}{n}} \right)
\]
with $1-\delta$ probability, i.e., it achieves the Gaussian
rate~\eqref{eq:gaussianerror} up to constant factors.  But such constants are important in statistical estimation: statistics
texts, for example \citep{maindonald_braun_2010,wasserman2004all,wackerly2014mathematical,casella2021statistical}, discuss asymptotic relative efficiency of the mean over the median (and asymptotic optimality of maximum-likelihood estimators in general) as an important consideration in choosing an estimator---in this case, the asymptotic efficiency of the mean results in a $\sqrt{\frac{2}{\pi}}$ factor smaller error bound in
the Gaussian case, leading to $\approx 36\%$ lower sample complexity.  As a
result, many practitioners use the mean, and then are vulnerable to
outliers.  It is therefore important to have estimators that are as
efficient as possible, while still working without strong assumptions
on the data distribution.

To address this, \cite{catoni} developed
a 1-dimensional mean estimator that is tight up to $1 + o(1)$ factors:
for $n \gg \log \frac{1}{\delta}$, it gives error
\[
  \abs{\wh{\mu} - \mu} \leq (1+ o(1))\cdot \sigma \sqrt{\frac{2\log\frac{1}{\delta}}{n}},
\]
matching the Gaussian rate~\eqref{eq:gaussianerror}.  Catoni's
estimator requires knowledge of the variance $\sigma^2$; this
requirement was removed by \cite{lee2022optimal}, at a cost of a
larger $o(1)$ term.
Even the Median-of-Means-style $O(\sigma \cdot \sqrt{\log \frac 1 \delta / {n}}))$ guarantee is information-theoretically impossible if $n \ll \log \frac 1 \delta$ \citep{devroye_subg_multiple_delta}.
It is open whether the Catoni-style $(1+o(1))$ guarantee can be achieved when $n = \Theta(\log \frac 1 \delta)$.
We henceforth assume $n \gg \log \frac 1 \delta$.


\paragraph{High-dimensional mean estimation.}  In
dimension $d > 1$, naively applying a 1-dimensional estimator to the
coordinates independently leads to the suboptimal rate
$O\left(\sigma \cdot \sqrt{\frac{d \log \frac{d}{\delta}}{n}}\right)$.  Over the past
few years, a number of works in statistics and theoretical computer
science have developed better estimators~\citep{lugosi2019sub,hopkins_subgaussian,pmlr-v99-cherapanamjeri19b}, matching the
Gaussian rate~\eqref{eq:gaussianerror} up to constant factors.  But as
with $d=1$, we can ask: what constant factors are achievable, and in particular, can the Gaussian rate~\eqref{eq:gaussianerror} be matched up to $(1+o(1))$?

There are two terms in~\eqref{eq:gaussianerror}, and so we will refer to two different constants: the optimal constant $c_{d}$ on
$\sqrt{\frac{d}{n}}$ whenever $d \gg \log \frac{1}{\delta}$ and the
first term dominates, and the optimal constant $c_{\delta}$ on
$\sqrt{\frac{2 \log \frac{1}{\delta}}{n}}$ when
$\log \frac{1}{\delta} \gg d$ and the second term dominates. There is
also a third regime---when $d \eqsim \log \frac{1}{\delta}$---but this
regime is quite complicated to analyze.  Even in the Gaussian case,
the error bound~\eqref{eq:gaussianerror} does not give the tight
constant in this regime.  For this paper we ignore the intermediate
regime.

One can get a natural upper bound on these constants by lifting $1$-dimensional estimators to
$d$ dimensions. \cite{catoni_giulini} used a ``PAC-Bayes'' argument to show that if the Catoni estimator is applied
to every direction $u$, then every estimate $\wh{\mu}_u$ of
$\inner{\mu, u}$ has error bounded by the Gaussian
rate~\eqref{eq:gaussianerror}.  The set of possible $d$-dimensional
means $\mu$ that satisfy all these $1$d constraints has diameter twice
this error rate.  One can then output the center $\wh{\mu}$ of the
minimum enclosing ball of this set. Jung's theorem~\citep{Jung1901} states
that this loses just a constant factor: any set of diameter $2$ is
enclosed in a ball of radius
$JUNG_d := \sqrt{\frac{2d}{d+1}} \leq \sqrt{2}$. Therefore
\begin{align}
    \label{eq:jung_d_error}
  \norm{\hat{\mu} - \mu} \leq JUNG_d \cdot (1+o(1)) \sigma \left( \sqrt{\frac{d}{n}} + \sqrt{\frac{2 \log\frac{1}{\delta}}{n}} \right )
\end{align}
and so both $c_{d}$ and $c_{\delta}$ are at most
$JUNG_d \leq \sqrt{2}$.  For very large dimension one can do better:
\cite{lee2022optimal-highdim} showed for $d \gg \log^2 \frac{1}{\delta}$
that the Gaussian rate~\eqref{eq:gaussianerror} can be matched
precisely, so $c_d = 1$ for such large $d$.

\paragraph{Our contributions: heavy-tailed estimation.}
%
%


Our main result gives an algorithm with a strictly better constant factor than in \eqref{eq:jung_d_error} when $\log \frac{1}{\delta} \gg d$ and $d \ge 2$---that is, we show that $c_\delta$ is strictly smaller than $JUNG_d$ for all $d \geq 2$.

\begin{restatable}{theorem}{heavyupperthm}\label{thm:heavy_upper}
  There exists constants $\tau, C > 0$ such that the following holds.  Let $d \geq 2$, and suppose $n \geq C \log \frac{1}{\delta} \geq C^2 d$.
  There is an algorithm that takes $n$ samples from a distribution over $\R^d$ with covariance $\Sigma \preceq \sigma^2 I$, as well as $\sigma^2$ and $\delta$, and outputs an estimate $\wh \mu$ of the mean $\mu$ that achieves
  \[
    \norm{\wh{\mu} - \mu} \leq (1-\tau) \cdot JUNG_d \cdot \sigma \sqrt{\frac{2 \log \frac{1}{\delta}}{n}}
  \]
  with $1-\delta$ probability.
\end{restatable}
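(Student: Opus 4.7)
The plan is to refine the naive Catoni-plus-Jung estimator that gives the bound \eqref{eq:jung_d_error}. Apply the PAC-Bayes Catoni estimator of \cite{catoni_giulini} to obtain 1D estimates $\wh m_u$ for every $u \in \S^{d-1}$, with uniform guarantee $|\wh m_u - \langle \mu, u\rangle| \leq r$, where $r = (1+o(1)) \sigma \sqrt{2 \log(1/\delta)/n}$ in the regime $\log(1/\delta) \gg d$. The feasibility region $K := \{v \in \R^d : \sup_u |\langle v, u\rangle - \wh m_u| \leq r\}$ contains $\mu$ and has diameter at most $2r$; outputting the Chebyshev center of $K$ yields the baseline $JUNG_d \cdot r$ guarantee via Jung's theorem. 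The goal is to shave off a constant factor $\tau > 0$, which amounts to showing that the circumradius of $K$ is strictly smaller than $JUNG_d \cdot r$ with high probability.

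The approach hinges on two ingredients: (i) a quantitative Jung stability theorem, and (ii) a joint-tail bound for Catoni errors across multiple directions. For (i), I would prove that if a convex body $K$ of diameter $2r$ does not contain $d+1$ points forming an approximate regular simplex of side $2(1-\eta) r$ for some constant $\eta > 0$, then its circumradius is at most $(1-\tau) JUNG_d \cdot r$ with $\tau = \Omega(\eta^2)$. For (ii), for any fixed set of $d+1$ simplex directions $u_1, \ldots, u_{d+1}$, the event that $K$ contains the corresponding simplex requires the Catoni errors $e_{u_i} := \wh m_{u_i} - \langle \mu, u_i\rangle$ to all exceed $(1-\eta) r$ in magnitude with the prescribed signs. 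I would extend the PAC-Bayes Catoni analysis to show that this joint event has probability at most $\delta^{c(d+1)(1-\eta)^2}$ for some constant $c > 0$, which is much less than $\delta$ for $d \geq 2$ and $\eta$ a suitably chosen constant. A union bound over an $\eta$-net of simplex configurations in $(\S^{d-1})^{d+1}$, of size $\exp(O(d \log(1/\eta)))$, then certifies the absence of any near-regular simplex in $K$ with probability at least $1 - \delta/2$, and combining with (i) completes the proof.

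The main technical obstacle is the joint-tail bound in (ii). The Catoni errors in different directions are not independent, since they are all functions of the same $n$ samples, so a product of marginal tail bounds is not directly available. The guiding intuition is that the heavy-tailed events driving the tail of $e_u$ come from samples with anomalously large projection onto $u$, and for well-spread simplex directions these subsets of influential samples are approximately disjoint, producing effective independence in the joint tail event. Formalizing this should require a joint exponential-moment estimate analogous to the influence-function analysis in Catoni's original proof, or equivalently a PAC-Bayes posterior supported on multi-direction simplex configurations rather than a single direction; a secondary difficulty is making the quantitative Jung stability theorem in (i) sharp enough that the constant $\tau$ it provides is absolute rather than degenerating with $d$.
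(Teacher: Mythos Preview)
Your route is genuinely different from the paper's, and the gap you flag in step (ii) is real and, as far as I can see, fatal for this line of argument under a bare second-moment assumption.

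The paper does not try to control joint tails of Catoni errors at all. Instead it first builds a two-dimensional estimator beating $JUNG_2$, and then lifts it to $\R^d$ via the generalized Jung theorem~\cite{generalized_Jung}: running the 2D estimator on every plane in a net of 2D subspaces and taking the Chebyshev center of the resulting feasibility set yields the $(1-\tau)\cdot JUNG_d$ bound with the \emph{same} absolute constant $\tau$ as in 2D. The 2D estimator rests on a structural dichotomy: test whether a constant fraction of the variance in direction $e_1$ or $e_2$ comes from points beyond a threshold $\beta T$ (``inlier-light''). If so, Catoni computed with a slightly sharpened $\psi$ beats the $OPT_1$ constant in that direction; a geometric argument then shows that for any inscribed triangle realizing the circumradius of the feasibility set, at least one of its three side-directions inherits the inlier-light property, so the circumradius drops below $JUNG_2\cdot OPT_1$. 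If neither axis is inlier-light, then the distribution is outlier-light in every direction, and a coordinate-wise trimmed mean is analyzed via Bernstein to achieve $(1+O(\sqrt\beta))\cdot OPT_1$ directly, which is already below $JUNG_2\cdot OPT_1$ with no Jung factor at all.

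Your step (ii) requires $\Pr\big[\,|e_{u_i}|\ge(1-\eta)r\ \forall i\,\big]\le\delta^{c(d+1)(1-\eta)^2}$. The natural tool is a joint MGF: bound $\E\exp\big(\sum_i\psi(\langle x,u_i\rangle/T)\big)$ per sample. But Catoni's inequality only gives $e^{\psi(a_i)}\le 1+a_i+a_i^2/2$, and when you multiply $d+1$ such factors the expansion produces cross-moments of $x$ of order up to $2(d+1)$, which are completely uncontrolled by $\Sigma\preceq\sigma^2 I$. Your ``disjoint influential samples'' heuristic is right when the heavy mass sits along a few well-separated rays, but it fails for distributions whose outlier mass is isotropic at scale $T$ (e.g.\ mass $\Theta(\log(1/\delta)/n)$ spread over the vertices of a regular simplex of radius $T$): there every sample in the tail influences every $e_{u_i}$ simultaneously, and there is no evident decoupling. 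I do not see how to rescue this step without an additional moment assumption, and the paper's construction was designed precisely to avoid it.

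Your step (i) is also delicate: you need the Jung-stability constant relating $\eta$ to $\tau$ to be dimension-free, and you have not argued this. The paper sidesteps the issue entirely by doing the stability analysis only in dimension two (where it is an explicit circumradius computation on triangles) and then invoking generalized Jung, which transports the 2D gain to $d$ dimensions without loss.
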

In particular, the limiting constant as $d \to \infty$ is
$\sqrt{2} - \tau$ for some $\tau > 0$. 

\paragraph{Our contributions: robust estimation.}  A related problem,
also extensively studied in theoretical computer science over the past
decade, is \emph{robust} mean estimation~\citep{diakonikolas2023algorithmic}.  In robust mean estimation, the data is
initially drawn from a covariance $\Sigma \preceq I$ distribution, but
an adversary can corrupt an arbitrary $\eps$ fraction of the data
points.  In this model, estimation error remains even in the
population limit as $n \to \infty$.  In one dimension, the optimal
error bound is
\[
  (1 + O(\eps))\sqrt{2 \eps}.
\]

As with heavy-tailed estimation, one can lift the 1d estimator to
higher dimensions: apply the one-dimensional estimator in every
direction, take the intersection of their confidence intervals to get
a set of candidate means, and output the center of the minimum
enclosing ball.  And as with heavy-tailed estimation, this loses a
factor $JUNG_d = \sqrt{\frac{2d}{d+1}}$.  But,
unlike with heavy-tailed estimation, this is tight:

\begin{restatable}{theorem}{robustlowerthm}\label{thm:robustlower}
  For every $d \geq 1$ and $\eps \leq \frac{1}{2}$, every algorithm
  for robust estimation of $d$-dimensional distributions with
  covariance $\Sigma \preceq \sigma^2 I$ has error rate
  \[
    \E[\norm{\wh{\mu} - \mu}] \geq JUNG_d \cdot (1 + O(\eps)) \cdot \sqrt{2 \sigma^2 \eps}
  \]
  on some input distribution, in the population limit.
\end{restatable}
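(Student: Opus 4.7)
The plan is to prove the lower bound via a multi-hypothesis minimax argument against the vertices of a regular simplex, using a single adversarial observation consistent with any one of the hypotheses. Concretely, I would construct $d+1$ distributions $P_1, \ldots, P_{d+1}$ on $\R^d$ with (i) means $\mu_i$ at the vertices of a regular simplex of circumradius $R = JUNG_d \cdot (1 + O(\eps)) \cdot \sqrt{2\sigma^2 \eps}$, (ii) covariances $\Sigma_i \preceq \sigma^2 I$, and (iii) a common distribution $\wt P^*$ with $d_{TV}(\wt P^*, P_i) \le \eps$ for every $i$. Given such a family, the adversary can present $\wt P^*$ as the observation regardless of which $P_i$ is the truth; any deterministic estimator is then forced to output a single point $\wh \mu$, and by the definition of circumradius, $\max_i \|\wh \mu - \mu_i\| \ge R$, which is the claimed lower bound.

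The edge length of the simplex is governed by the tight 1-dimensional robust lower bound applied pair by pair. For each pair $(i,j)$, projecting onto the edge direction $u_{ij} = (\mu_j - \mu_i)/\|\mu_j - \mu_i\|$ yields a 1-D robust instance with projected variance $\le \sigma^2$ by (ii) and TV distance $\le 2\eps$ by (iii) plus the triangle inequality. The tight 1-D lower bound then gives $\|\mu_i - \mu_j\| \le 2\sigma\sqrt{2\eps/(1-2\eps)} = (1+O(\eps)) \cdot 2\sqrt{2\sigma^2\eps}$, and the circumradius of a regular simplex of this edge length is exactly $JUNG_d \cdot (1+O(\eps)) \cdot \sqrt{2\sigma^2\eps}$, matching (i). For this to be attained simultaneously over all edges, the variance of each $P_i$ in every edge direction $u_{ij}$ must be essentially $\sigma^2$, i.e., $\Sigma_i$ must be nearly isotropic.

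The main obstacle is constructing the family $\{P_i, \wt P^*\}$. The naive common-bulk construction $P_i = (1-2\eps)\delta_0 + 2\eps \delta_{\mu_i/(2\eps)}$ has pairwise $d_{TV}(P_i, P_j) = 2\eps$, but (a) no common $\wt P^*$ within TV distance $\eps$ of all $P_i$ exists (the natural symmetric candidate has TV distance $2\eps d/(d+1) > \eps$ for $d \ge 2$), and (b) its rank-one covariance in the $\mu_i$ direction forces the \emph{circumradius} rather than the edge length to be at most $(1+O(\eps))\sqrt{2\sigma^2\eps}$, losing the $JUNG_d$ factor. The improvement must place $\wt P^*$'s support symmetrically across the simplex so that each $P_i$ can be obtained from $\wt P^*$ by moving only an $\eps$-fraction of mass, and must spread each $P_i$'s corruption across all $d$ edge directions incident to $\mu_i$ so that $\Sigma_i$ approximates $\sigma^2 I$ rather than being rank-one. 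The technical crux is then verifying, via a symmetrization argument over the isometry group of the simplex, that such a construction satisfies $\Sigma_i \preceq \sigma^2 I$ with the sharp $(1+O(\eps))$ constant; any slack in this step would translate directly into a worse constant in the final bound.
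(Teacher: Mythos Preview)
Your high-level strategy matches the paper's: a simplex-based multi-hypothesis argument with a single observed distribution $\wt P^*$ consistent with all $d+1$ hypotheses. You also correctly diagnose why the naive rank-one construction fails and that $\wt P^*$ must be supported symmetrically across the simplex. However, two genuine gaps remain.

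\textbf{First, the explicit construction is missing.} You identify the ``technical crux'' but do not carry it out. The paper's key insight is concrete and clean: take $\wt P^*$ to put mass $\eps$ on \emph{each} of the $d+1$ unit simplex vertices $v_1,\dots,v_{d+1}$ (and mass $1-(d+1)\eps$ at the origin), and let $P_j$ be the same distribution with the single atom at $v_j$ reflected to $-v_j$. Then $d_{TV}(\wt P^*, P_j)=\eps$ exactly, the mean of $P_j$ is $-2\eps v_j$, and a two-line computation shows $\Sigma_j \preceq \tfrac{d+1}{d}\eps\, I$, i.e. $\sigma^2 = \tfrac{d+1}{d}\eps$. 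The circumradius of the means is $2\eps = JUNG_d \cdot \sqrt{2\sigma^2\eps}$ on the nose, with no $(1+O(\eps))$ slack. Your ``spread the corruption across all edge directions'' intuition is right, but the realization is simpler than a symmetrization over the isometry group: the covariance is isotropic because the vertices of a regular simplex form an isotropic set. Also note a small slip: for randomized estimators you need the average (not just the max) distance from any point to the simplex vertices to be at least the circumradius; this is a separate one-line lemma (convexity and symmetry), not the definition of circumradius.

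\textbf{Second, the regime $\eps > \tfrac{1}{d+1}$ is unaddressed.} Your (and the paper's) construction requires $(d+1)\eps \le 1$ for $\wt P^*$ to be a probability measure. The paper handles larger $\eps$ by restricting to a subspace of dimension $d' = \lfloor 1/\eps - 1\rfloor$, applying the small-$\eps$ bound there, and observing $JUNG_{d'} \ge (1-\eps)\,JUNG_d$; this is where the $(1+O(\eps))$ factor in the statement actually comes from. Without this step your argument only covers $\eps \le \tfrac{1}{d+1}$.
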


As discussed above, this is matched by the (somewhat folklore)
algorithm of estimating all $1$d projections and taking the center of the
minimum enclosing ball of feasible means:

\begin{restatable}[Folklore + Jung's theorem]{theorem}{robustupperthm}\label{thm:robustupper}
  For every $d \geq 1$ and $\eps \leq \frac{1}{3}$, there is an
  algorithm for robust estimation of $d$-dimensional distributions
  of covariance $\Sigma \preceq \sigma^2 I$ with error rate
  \[
    \E[\norm{\wh{\mu} - \mu}] \leq JUNG_d \cdot (1 + O(\eps)) \cdot \sqrt{2 \sigma^2 \eps}
  \]
  in the population limit.
\end{restatable}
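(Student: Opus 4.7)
The plan is the standard ``lift a one-dimensional robust estimator to $d$ dimensions via 1D projections and then take the center of the minimum enclosing ball of the feasible set,'' then bound the radius of that ball using Jung's theorem.

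First I would recall (or cite) the one-dimensional ingredient: in the population limit, a trimmed-mean-style estimator on any variance-$\sigma^2$ 1D distribution corrupted in total variation by at most $\eps$ (for $\eps \leq 1/3$) achieves error $(1+O(\eps))\sqrt{2 \sigma^2 \eps}$ against the true mean. This is standard: after removing the top and bottom $\eps$-quantiles of the corrupted distribution, Chebyshev's inequality applied on the ``good'' tails of the original distribution controls how far the trimmed mean can drift. Denote this 1D algorithm by $A_{1D}$, and let $r := (1+O(\eps))\sqrt{2\sigma^2 \eps}$.

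Next, for every direction $u \in \S^{d-1}$, consider the pushforward $D'_u$ of the corrupted distribution under $x \mapsto \langle x, u \rangle$. Since projection does not increase total-variation distance and the uncorrupted pushforward has variance $u^\top \Sigma u \leq \sigma^2$, applying $A_{1D}$ to $D'_u$ produces $\wh{\mu}_u$ with $|\wh{\mu}_u - \langle \mu, u \rangle| \leq r$. Define
\[
S := \{ x \in \R^d : |\langle x, u \rangle - \wh{\mu}_u| \leq r \ \text{for all} \ u \in \S^{d-1}\}.
\]
Then $\mu \in S$ by construction, and $S$ is convex and closed as an intersection of slabs. Moreover, for any $x,y \in S$, taking $u = (x-y)/\|x-y\|$ yields $\|x-y\| = \langle x-y, u\rangle \leq 2r$, so $\mathrm{diam}(S) \leq 2r$.

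Finally, output $\wh{\mu}$ = center of the minimum enclosing ball of $S$. By Jung's theorem, a set of diameter $2r$ in $\R^d$ is contained in a ball of radius $\sqrt{\frac{2d}{d+1}} \cdot r = JUNG_d \cdot r$; since $\mu \in S$, this gives $\|\wh{\mu} - \mu\| \leq JUNG_d \cdot (1+O(\eps))\sqrt{2\sigma^2 \eps}$ deterministically in the population limit. Taking expectations (which is trivial since the bound is deterministic) proves the claim. The only nontrivial step is the first one---verifying the sharp $(1+O(\eps))\sqrt{2\sigma^2 \eps}$ constant for the 1D trimmed mean in the population limit---and even that is a short Chebyshev calculation; the remaining geometry (slab intersection plus Jung) is entirely black-box.
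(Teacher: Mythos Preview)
Your high-level structure---define a feasible set containing $\mu$, bound its diameter, output the circumcenter via Jung's theorem---matches the paper exactly. The difference is in the one-dimensional ingredient. The paper does not use a trimmed mean at all: instead it defines $S$ as the set of means of \emph{all} distributions $X$ with $TV(X,D')\le\eps$ and $\Var(X)\le\sigma^2$, and bounds $\mathrm{diam}(S)$ by proving directly (Lemma~\ref{lem:robust1d}) that any two such distributions $X,Y$ satisfy $|\E X-\E Y|\le 2\sqrt{2\sigma^2\eps}/\sqrt{1-2\eps}$. The proof of that lemma is a two-line Cauchy--Schwarz on the optimal coupling: $\E[X-Y]^2=\E[(X-Y)\1_{X\ne Y}]^2\le \E[(X-Y)^2]\cdot\Pr[X\ne Y]$.

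Your route instead needs the trimmed mean to hit the sharp leading constant $(1+O(\eps))\sqrt{2\sigma^2\eps}$. You call this ``a short Chebyshev calculation,'' but the standard Chebyshev-on-tails analysis of the trimmed mean typically produces $O(\sigma\sqrt{\eps})$ with a constant larger than $\sqrt{2}$; getting exactly $\sqrt{2}$ from trimming is not obvious and you have not actually carried it out. So either you should supply that calculation with the correct constant, or---simpler---replace the trimmed-mean step by the paper's coupling/Cauchy--Schwarz lemma, at which point your slab set $S$ can be replaced by the set of feasible means and the two proofs coincide. Everything downstream (the diameter-to-radius step via Jung, the deterministic bound in the population limit) is identical.
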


We provide the full proof of Theorem~\ref{thm:robustlower} in Appendix~\ref{sec:robust-lower} and
Theorem~\ref{thm:robustupper} in Appendix~\ref{sec:robust-upper}.

\paragraph{Summary.} The mean estimation error bound has three terms,
corresponding to the dependence on dimension $d$, on failure
probability $\delta$, and on robustness $\eps$.  Lee and
Valiant~\cite{lee2022optimal-highdim} showed that the $d$-dependent term does not lose a
constant factor relative to the Gaussian rate, for sufficiently large
$d$.  We show that the $\eps$-dependent term loses exactly the
constant $JUNG_d$ that arises when lifting $1$-dimensional estimates
to $d$-dimensional estimates, while the $\delta$-dependent term is better than
$JUNG_d$ times the Gaussian rate, for all $d \neq 1$.
For the latter result, we construct a novel high-dimensional mean estimator which goes beyond lifting a one-dimensional estimator.

\subsection{Related Work}
\paragraph{Heavy-tailed and Robust Estimation.} Both settings been extensively studied by the statistics and theoretical computer science communities; see for example, a recent survey \citep{lugosi2019mean} and book \citep{diakonikolas2023algorithmic}. For heavy-tailed estimation, several works have established asymptotic bounds matching the Gaussian rate for a variety of estimation tasks, including mean estimation~\citep{lugosi2019sub,catoni_giulini}, covariance estimation~\citep{abdalla2023covariance,Mendelson2018RobustCE}, and regression~\citep{heavy_tail_regression}. Similarly, robust estimation has been studied in a variety of settings, including mean estimation~\citep{robust_first,diakonikolas2017robustly}, covariance estimation~\citep{robust_covariance_cheng}, list-decodable estimation~\citep{list_decodable_robust,list_decodable_robust_fast}, and regression~\citep{robust_regression}.
\citep{diakonikolas2020outlier,hopkins2020robust} study rigorous connections between robust and heavy-tailed estimation.

Despite the large body of work on both these models, the algorithms proposed have so far seen limited adoption in practice. One reason for this is suboptimal constants. Samples can be precious, and statistics texts often report ``asymptotic relative efficiency'' of various estimators (similar in spirit to the constant factors we study here).
Since the empirical mean has optimal asymptotic efficiency,
in some texts practitioners are taught to use the mean over the median (despite the robustness the median provides) if the data ``looks'' Gaussian via eyeballing \citep{maindonald_braun_2010}, since using the median would require collecting $\approx 50\%$ more samples.
In one dimension, this is unprincipled and error prone; in high dimensions, it is not even a viable strategy.

\paragraph{Towards optimal constants.} To overcome the above issues and promote adoption, there has been a flurry of recent work attempting to achieve sharp rates (including constants) for a variety of statistical estimation \citep{lee2022optimal, lee2022optimal-highdim, pmlr-v195-minsker23a, Minsker2022UstatisticsOG, catoni,catoni_giulini,devroye_subg_multiple_delta,GLP_high_dim,GLP_symmetric,gupta2023minimaxoptimal} and testing \citep{GP_huber,dang2023improving, kipnis2023minimax} tasks. Of these, for heavy-tailed estimation, Catoni~\citep{catoni} showed an estimator matching the Gaussian rate in dimension $d=1$ when the variance $\sigma^2$ is known. This was followed by work that achieved the same rate even when $\sigma^2$ is unknown~\citep{lee2022optimal}. 

For $d>1$, a natural estimator outputs the center of the minimum enclosing ball of the intersection of one-dimensional confidence intervals. For covariance $\Sigma \preccurlyeq \sigma^2 I_d$, \cite{catoni_giulini} showed a PAC-Bayes argument that implies a $\sqrt{\frac{2d}{d+1}} \cdot \sigma \left(\sqrt{\frac{d}{n}} + \sqrt{\frac{2 \log \frac{1}{\delta}}{n}}\right)$ rate for this estimator, incurring a $\sqrt{\frac{2 d}{d+1}}$ factor over the Gaussian rate. When the $\sqrt{\frac{d}{n}}$ term dominates by a $\sqrt{\log \frac{1}{\delta}}$ factor, \cite{lee2022optimal-highdim} showed an estimator with an improved rate of $\sigma \sqrt{\frac{d}{n}}$, matching the Gaussian rate in this regime. This work shows that the $\sqrt{\frac{2d}{d+1}}$ factor can be improved upon even when the $\sqrt{\frac{2 \log \frac{1}{\delta}}{n}}$ term dominates.




\section{Proof Overview}


\subsection{Heavy-Tailed Estimator}

\paragraph{High-level goal.} In one dimension, the optimal error rate for $(1-\delta)$-probability mean estimation is
$\sigma \sqrt{\frac{2\log \frac{1}{\delta}}{n}}$, which we will call
$OPT_1$.  In $d$ dimensions, one can apply the one-dimensional bound
in every direction (with either a union bound, or more efficiently
with PAC-Bayes~\citep{catoni_giulini}) to identify a set of candidate means of
diameter $2 OPT_1 + O(\sigma \sqrt{d /n})$; suppose
$\log \frac{1}{\delta} \gg d$, so the high-probability term $2 OPT_1$
dominates.  Then, Jung's theorem states that the minimum enclosing ball of this
set has radius at most
$\sqrt{\frac{2d}{d+1}} \cdot OPT_1  = JUNG_d \cdot OPT_1$.
In Theorem~\ref{thm:heavy_upper} we show that a better constant factor
is possible.

Our key technical result is a mean estimation algorithm for \emph{two}
dimensions, with error
$(1 - \tau)\cdot JUNG_2 \cdot OPT_1 = (1-\tau)\frac{2}{\sqrt{3}} \cdot
OPT_1$ for a constant $\tau > 0$.  Given this result, we can lift it
to higher dimensions using a generalization of Jung's
theorem~\citep{generalized_Jung}: for a dimension-$d$ set $S$, if
every dimension-$k$ projection has length $2r_k$, then $S$ is enclosed
in a ball of radius $r_k \cdot \frac{JUNG_d}{JUNG_k}$.  So our
$(1-\tau)$ improvement for $d=2$ yields a $(1-\tau)$ improvement for
all $d$, and in particular asymptotic error $(1-\tau)\sqrt{2} \cdot OPT_1$
rather than $\sqrt{2} \cdot OPT_1$ for $d \to \infty$.

\paragraph{Variant of Catoni's estimator for $d=1$.}
To understand our $d=2$ estimator, it's helpful to understand how to
get the optimal constant for $d=1$.  In Appendix~\ref{sec:catoni} we
give a simple, 2-page self-contained analysis of a variant of Catoni's
estimator~\citep{catoni}.

Define $T = \sigma \sqrt{\frac{n}{2 \log \frac{1}{\delta}}}$, and
consider a $\psi$ function satisfying
\begin{align}
  \label{eq:catonirequirement}
-\log\left(1 - x + \frac{x^2}{2}\right)\le \psi(x) \le \log\left(1 + x + \frac{x^2}{2} \right)  
\end{align}
such as $\psi(x) = x - x^3/6$ for $\abs{x} \leq \sqrt{2}$, and
$\psi(x) = \frac{2 \sqrt{2}}{3} \cdot \sign(x)$ otherwise. We plot this function below, along with two other functions from~\citep{catoni} satisfying the above bound.

\begin{figure}[H]
    \centering
    \hspace{-0.8cm}
    \includegraphics[width=0.5\textwidth]{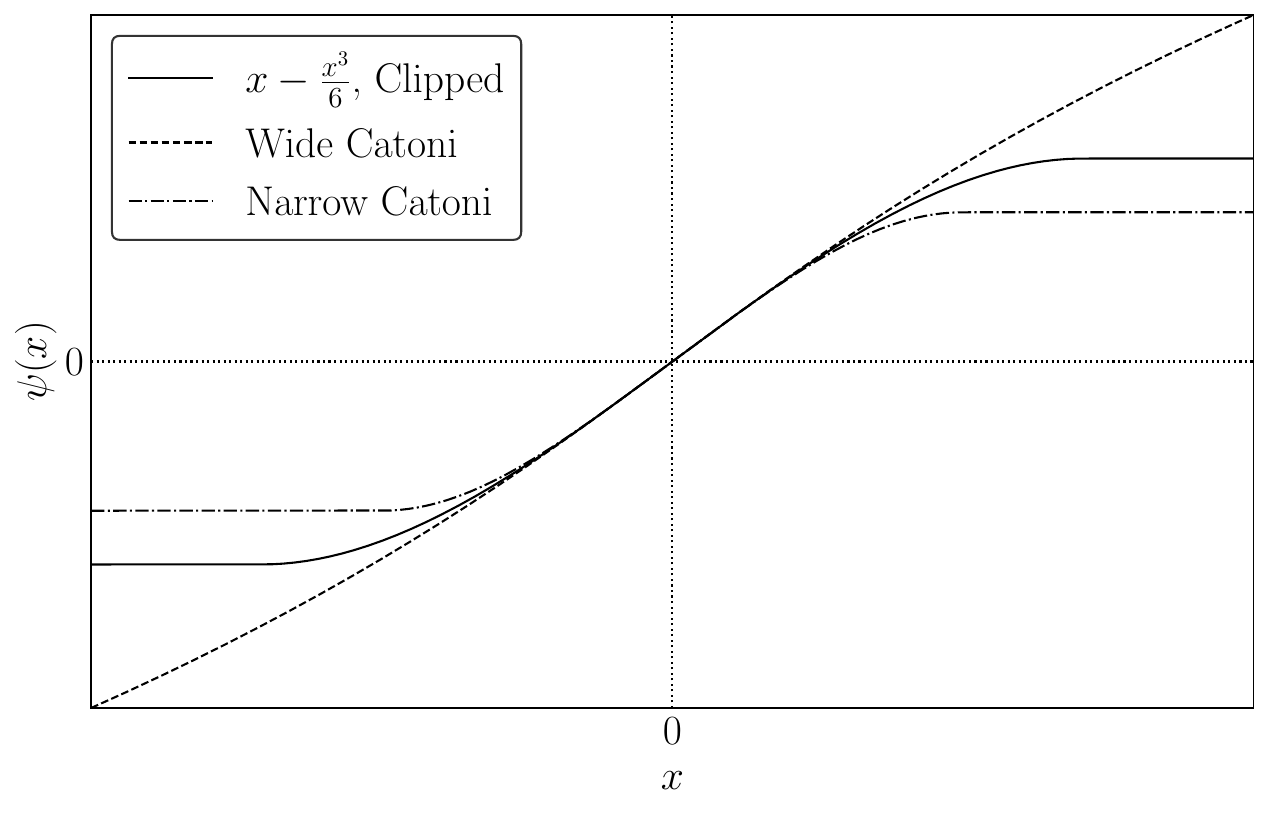}
    \caption{Some $\psi$ functions satisfying Catoni's constraints~\eqref{eq:catonirequirement}} 
    \label{fig:catoni_psi}
\end{figure}

Suppose we have an initial estimate $\mu_0$ that has small big-O 
error, but with a large constant factor---say, the median-of-means
estimate on an initial sample of $\xi n$ points for a small constant
$\xi$.  This will have error
$O_\xi\left(\sigma \sqrt{\frac{\log \frac{1}{\delta}}{n}}\right)$, which we
would like to drive down to
$OPT_1 = \sigma \sqrt{\frac{2\log \frac{1}{\delta}}{n}}$.  The final
estimate is
\begin{align}\label{eq:catoniest}
  \wh{\mu} = \mu_0 + \frac{1}{n} \sum_{i=1}^n T \psi\left(\frac{x_i - \mu_0}{T}\right)
\end{align}
Intuitively, $T$ is the threshold for being an outlier: if
$\abs{x} \ll T$ always, then Bernstein's inequality will give that the
empirical mean achieves $(1 + o(1)) OPT_1$.  And indeed,
$T \psi(x / T) \approx x$ for $\abs{x} \ll T$, so the
estimate~\eqref{eq:catoniest} is close to the empirical mean in this
case.  On the other hand, elements $\abs{x} \gg T$ will only be
sampled $o(\log \frac{1}{\delta})$ times by Chebyshev's inequality, so
the sample of such events is completely unreliable for $1-\delta$
failure probability; the influence of such elements on the
estimator~\eqref{eq:catoniest} is negligible.  The challenge is to
handle the cases of $\abs{x} = \Theta(T)$.

The natural approach to show that $\wh{\mu}$ concentrates about $\mu$
is to bound its moment generating function (MGF).  The
conditions~\eqref{eq:catonirequirement} are precisely what are needed: $\E[\exp(\frac{n}{T} \wh{\mu})]$ depends on $\E[\exp(\psi( (x-\mu)/T))]$, which is controlled by just the mean and variance of $x$ through~\eqref{eq:catonirequirement}.  As we show in
Lemma~\ref{lem:vanilla_catoni_local}, this leads to the concentration bound
\[
  \abs{\wh{\mu} - \mu} \leq \left(1 + O\left(\frac{\log \frac{1}{\delta}}{n}\right)\right) OPT_1
\]
with probability $1-\delta$.

The estimator~\eqref{eq:catoniest} we analyze in Appendix~\ref{sec:catoni} is different from
the original Catoni estimator in that Catoni finds a root of
$\psi(\frac{x - \mu}{T})$, while our variant approximates this root
with essentially one step of Newton's method.  Our analysis does not
handle reuse of samples, so it requires the initial estimate $\mu_0$
to use a small initial sample.  This makes our analysis 
simpler than~\citep{catoni}, which is helpful for the extension we need to get the better
constant for $d = 2$.

\paragraph{A better constant for ``inlier-light'' distributions.}
The error of the estimate $\wh{\mu}$ is bounded by the
constraints~\eqref{eq:catonirequirement}.  So with a \emph{better}
bound, the estimate would sharpen by a constant factor.
In particular, if
we could find a $\psi$ with
\begin{align}
  \label{eq:catonirequirement2}
-\log\left(1 - x + (1-\eta)\frac{x^2}{2}\right)\le \psi(x) \le \log\left(1 + x + (1-\eta)\frac{x^2}{2} \right)  
\end{align}
then the variance term which appears in the MGF argument above would have a leading $(1 - \eta)$
factor, giving a better constant.  Unfortunately, \eqref{eq:catonirequirement} is not achieved by any function $\psi$ for all $x$ simultaneously: both the upper and lower constraints~\eqref{eq:catonirequirement} were
$x - x^3/6 \pm \Theta(x^4)$, so for any $\eta > 0$ if $x$ is small enough, shifting the constraints closer by
$\Theta(x^2)$ is impossible.  

But, for any $\beta > 0$, if we restrict attention to $x$ such that $\abs{x} > \beta$, the constraints of~\eqref{eq:catonirequirement} do not exactly
match, so there exists an $\eta$ for
which~\eqref{eq:catonirequirement2} is possible for all $|x| > \beta$.
We have already discussed one function satisfying tightened constraint: $\psi(x) = x -x^3/6$ for $|x| \le \sqrt{2}$ and $\psi(x) = \left(\sqrt{2} - \frac{2 \sqrt{2}}{6} \right) \sign(x)$ otherwise. This is plotted in Figure~\ref{fig:catoni_psi}.

As a result of this improved analysis, the Catoni estimate~\eqref{eq:catoniest} is a constant factor
better at handling the variance caused by $x$ whenever
$\abs{x} \gtrsim T$.

To formalize this idea, for any constants $\beta, L > 0$, we say a distribution is
``$(\beta, L)$-inlier-light'' if it has at most $(1-L) \sigma^2$
variance from elements smaller than $\beta T$. Catoni gets the tight
constant on the $(1-L) \sigma^2$ variance from inliers, and a
\emph{better} constant on the remaining at-most-$L \sigma^2$ variance.
Thus it gets error $(1 - \tau) OPT_1$ error on inlier-light
distributions, for some constant $\tau$ depending on $\beta$ and $L$.

\paragraph{An alternative to Catoni for outlier-light distributions.}
On the other hand, if a distribution is \emph{not} inlier-light, it
can have very few outliers: there's at most $L \sigma^2$ variance
remaining to come from outliers.  If we trim at a threshold $\alpha T$ for $\alpha > \beta$, then the contribution to the mean from the trimmed outliers is small:
the worst-case is when they are all at the threshold $\alpha T$, in which case the contribution is
$\frac{L \sigma^2}{\alpha^2 T^2} \cdot \alpha T = \frac{L}{\alpha}
OPT_1$.  And for small $\alpha$, Bernstein's
inequality says that the empirical mean of the untrimmed inliers will have
accuracy $(1 + O(\alpha))OPT_1$.  As a result, the
\emph{trimmed} mean, trimmed to $\alpha T$, achieves
$(1 + O(\alpha + L/\alpha)) OPT_1$ on distributions that are \emph{not} $(\beta, L)$
inlier-light, for any $\alpha > \beta$.

Note also that the property of being inlier-light can be tested with $1-\delta$ accuracy, since
it involves measuring the variance from bounded entries, as long as $L \gtrsim \beta$.  So for $L = \Theta(\beta)$, we can (1) test for inlier-lightness, and on non-inlier-light distributions (2) trim at $\alpha = \sqrt{\beta}$ to get $(1 + O(\sqrt{\beta})) OPT_1$ error.

\paragraph{Handling $d=2$.}
Per the above, in one dimension \emph{either} the Catoni estimate
achieves a constant better than 1, \emph{or} the trimmed mean achieves
constant close to 1.  The latter is promising because the empirical mean, in the
subgaussian case where it works, gets error $OPT_1$ \emph{independent
  of the dimension}.

Our $d=2$ algorithm is as follows.  We test whether the distribution
is inlier-light in either direction $e_1$ or $e_2$; if it is, we run
Catoni on every $1$d projection in a fine net around the circle, and
take the center of the minimum enclosing ball of the possible means.
In general, this gets at most $JUNG_2 \cdot OPT_1$ error; but the
tight instance for Jung is an equilateral triangle, and this error
only happens if Catoni gets error bound $OPT_1$ in three directions
approximately $120^\circ$ apart.  If the distribution is inlier-light
in some direction $e_i$, then it is also inlier-light (with slight
loss in parameters) in at least one of the triangle directions, so
Catoni gets a better error in that direction and a more accurate
estimate overall.

On the other hand, if the distribution is not inlier-light in either
the $e_1$ or $e_2$ direction, we remove any element larger than
$\sqrt{\beta} T$ in either direction and take the empirical mean of all other
samples.  This gets error $(1 + O(\sqrt{\beta})) OPT_1$, without
any dependence on $JUNG_2$.

For small enough constant $\beta$ and $L = \Theta(\beta)$, either situation will give a
constant better than $JUNG_2$. Finally, as stated before, we can lift our two-dimensional estimate to higher dimensions using a generalization of Jung's theorem (Theorem~\ref{thm:generalized_jung}, \cite{generalized_Jung}) to obtain a constant better than $JUNG_d$ in $d$-dimensions.

\subsection{Robust Estimation, Lower Bound}

Now, we discuss the ideas behind Theorem~\ref{thm:robustlower}, showing that the naive strategy of combining one-dimensional estimates is optimal for the robust estimation setting.


\begin{figure}[H]
    \vspace{-0.2cm}
    \centering
    \begin{minipage}{.23\textwidth}
    \includegraphics[width=\textwidth]{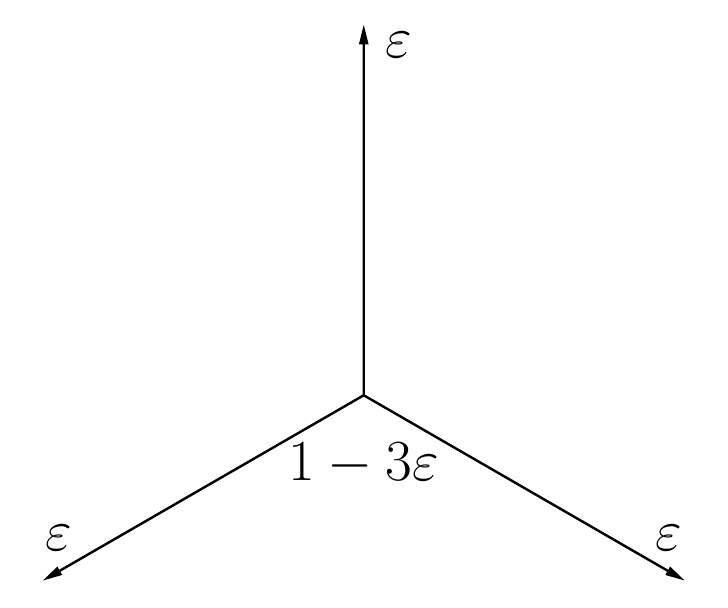}
    \end{minipage}
    \begin{minipage}{.23\textwidth}
       \includegraphics[width=\textwidth]{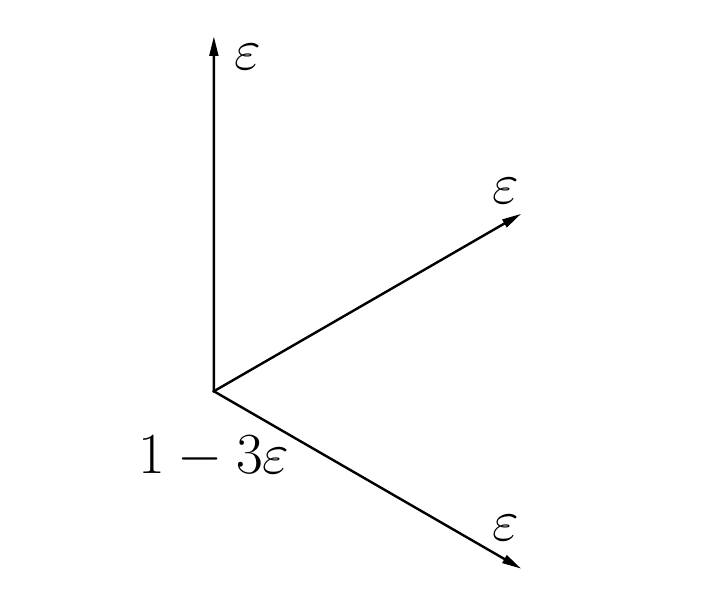} 
    \end{minipage}
    \begin{minipage}{.23\textwidth}
       \includegraphics[width=\textwidth]{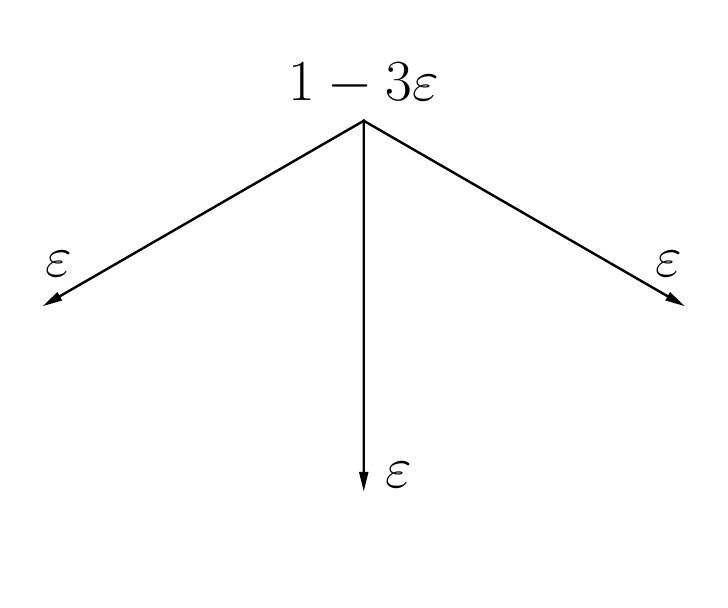} 
    \end{minipage}
    \begin{minipage}{.23\textwidth}
       \includegraphics[width=\textwidth]{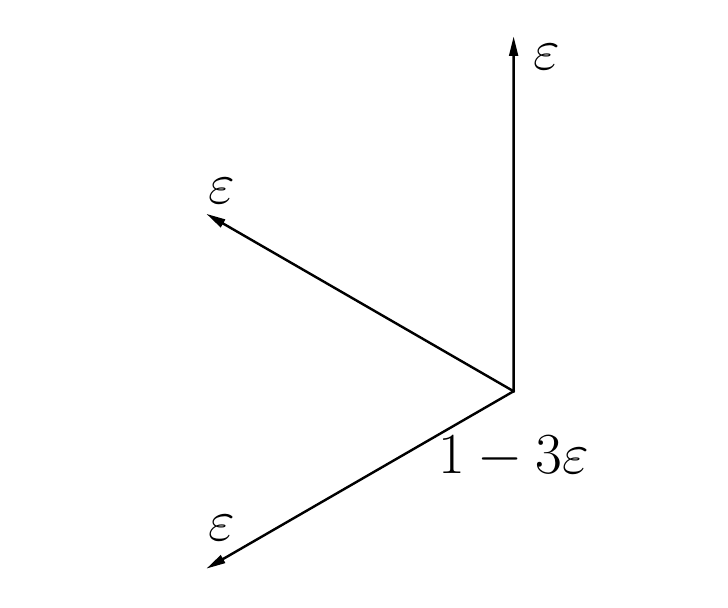} 
    \end{minipage}
    \caption{For $d = 2$, the algorithm sees as input the distribution on the left after the adversary corrupts $\eps$-mass. The three distributions to its right are ones consistent with the input.}
    \label{fig:robust_lower_bound}
\end{figure}

We first show the lower bound for $\eps \leq \frac{1}{d+1}$.  The hard instance is that the adversary hands over a distribution that
puts $\eps$ mass on each vertex of the regular simplex.  The true
distribution is the same, but with one of the vertices reflected across
the origin.  These distributions are all consistent with the
observed distribution -- that is, they have total variation at most $\varepsilon$ to the distribution handed to us by the adversary -- but have means at vertices of a simplex.  A
regular simplex is the setting where Jung's theorem is tight, and some
calculation gives a $JUNG_d \cdot \sqrt{2 \eps}$ lower bound.

When $d > \frac{1}{\eps} - 1$, we instead restrict to a lower-dimensional space of dimension $d' = \floor{\frac{1}{\eps} - 1}$ and apply the same bound to get a $JUNG_{d'}$ lower bound.  Since $d'$ is large, both $JUNG_d$ and $JUNG_{d'}$ are $\sqrt{2} - O(\eps)$.  
\section{Proof Details -- Heavy-Tailed Estimator}

Here, we provide a detailed description of our heavy-tailed estimator, along with key lemmas in the proof of our main result, Theorem~\ref{thm:heavy_upper}. We will focus on our $2$-dimensional estimator that achieves a constant better than $JUNG_2$; as stated earlier, we can ``lift'' it to high-dimensions to obtain a constant better than $JUNG_d$ in $d$ dimensions. We begin with the formal definition of ``inlier-light'' and ``outlier-light'' distributions. 

\subsection{``Inlier-Light'' and ``Outlier-Light'' Distributions}

\begin{restatable}[$(\beta, L)$-Inlier-Light Distribution]{definition}{inlierlightdef}
    A distribution $x$ over $\R$ with variance at most $\sigma^2$ is ``$(\beta, L)$-inlier-light'' if:
    \[
\E\left[(x - \mu)^2 \1_{|x - \mu| \le \beta T} \right] < (1- L) \sigma^2    \]
    for $T = \sigma \sqrt{\frac{n}{2 \log \frac{2}{\delta}}}$.
\end{restatable}

That is, a distribution is $(\beta, L)$-inlier-light if at most $(1 - L)$ fraction of its variance comes from ``inlier'' points, points within $\beta T$ of $\mu$. We define outlier-light analogously:

\begin{restatable}[$(\beta, L)$-Outlier-Light Distribution]{definition}{outlierlightdef}
    A distribution $x$ over $\R$ with variance at most $\sigma^2$ is ``$(\beta, L)$-outlier-light'' if:
    \[
\E\left[(x - \mu)^2 \1_{|x - \mu| \ge \beta T} \right] < L \sigma^2    \]
    for $T = \sigma \sqrt{\frac{n}{2 \log \frac{2}{\delta}}}$. 
    
    A distribution $x$ over $\R^d$ is $(\beta, L)$-outlier-light if $\inner{x, w}$ is $(\beta, L)$-outlier-light for all unit vectors $w$.
\end{restatable}

\subsection{Estimator for One-Dimensional Inlier-Light Distributions}
We first show that a variant of Catoni's Estimator for one-dimensional distributions, when computed using an appropriate $\psi$ function, achieves a rate strictly better than $\sigma \sqrt{\frac{2 \log \frac{1}{\delta}}{n}}$, the Gaussian rate, when the distribution is \emph{inlier-light}. \hyperref[alg:catoni_local]{\textsc{CatoniEstimatorLocal}} takes an initial estimate $\mu_0$ of the mean $\mu$ as input, such that $|\mu_0 - \mu| \le O\left(\sigma \sqrt{\frac{\log \frac{1}{\delta}}{n}} \right)$, typically computed using the median-of-means estimator~\citep{mom_estimator}.

\begin{algorithm}[H]
\caption{\textsc{CatoniEstimatorLocal}}
    \label{alg:catoni_local} 
    \vspace{0.2cm}
    \paragraph{Input parameters:}
    \begin{itemize}
        \item Failure probability $\delta$, One-dimensional iid samples $x_1, \dots, x_n$, Initial estimate $\mu_0$, $\psi$ function, Scaling parameter $T$.
    \end{itemize}
    \begin{enumerate}
        \item Compute
            \begin{align*}
                r(\mu_0) = \frac{T}{n} \sum_{i=1}^n \psi\left(\frac{x_i - \mu_0}{T} \right)
            \end{align*}
        \item Return mean estimate $\wh \mu = r(\mu_0) + \mu_0$
    \end{enumerate}
\end{algorithm}

We will suppose that our $\psi$ function satisfies the following.

\begin{restatable}{assumption}{inliercatonipsidef}
\label{assumption:improved_catoni_psi}
$\psi$ satisfies that for all $x$,
\begin{align*}
-\log\left(1 - x + \frac{x^2}{2}\right)\le \psi(x) \le \log\left(1 + x + \frac{x^2}{2} \right)
\end{align*}
Additionally, for constants $0 <\beta, \eta < 1$, for all $|x| \ge \frac{\beta}{2}$, 
\begin{align*}
    -\log\left( 1 - x + (1 - \eta) \frac{x^2}{2}\right)\le \psi(x) \le \log\left(1 + x + (1 - \eta) \frac{x^2}{2} \right)
\end{align*}
\end{restatable}
Recall that the ``$x - \frac{x^3}{6}$, Clipped'' function from Figure~\ref{fig:catoni_psi} satisfies that there exists an $\eta$ such that the above is satisfied for every $\beta$.
We show that for $(\beta, L)$-inlier-distributions, \hyperref[alg:catoni_local]{\textsc{CatoniEstimatorLocal}} improves upon the Gaussian rate by a $\approx \left( 1 - \frac{\eta L}{4} \right)$-factor when using a $\psi$ function satisfying the above, given an initial estimate $\mu_0$ of the mean.
\begin{restatable}[Improved Rate for One-Dimensional Inlier-Light Distributions]{lemma}{lemonedinlierlightcatoni}
    \label{lem:improved_catoni_local}
    For every constant $0 < \beta, L < 1$, $C_1 > 1$ there exists constant $C_2 > 1$ such that the following holds. Suppose $\psi$ satisfies Assumption~\ref{assumption:improved_catoni_psi}, $n > C_2 \log \frac{1}{\delta}$, and we have an initial estimate $\mu_0$ with $|\mu_0 - \mu| \le C_1 \sigma \sqrt{\frac{\log \frac{1}{\delta}}{n}}$.   
  We let $T = \sigma \sqrt{\frac{n}{2 \log \frac{2}{\delta}}}$. 
  
  Given $n$ one-dimensional iid samples $x_1, \dots, x_n$ with mean $\mu$ and variance at most $\sigma^2$, if $x_i$ is $(\beta, L)$-inlier-light, then, with probability $1 - \delta$, the output $\wh \mu$ of Algorithm~\hyperref[alg:catoni_local]{\textsc{CatoniEstimatorLocal}} satisfies
    \begin{align*}
        \left|\wh \mu - \mu \right| \le \left(1 - \frac{\eta L}{4} + C_2\frac{\log \frac{1}{\delta}}{n} \right) \cdot \sigma \cdot \sqrt{\frac{2 \log \frac{2}{\delta}}{n}}
    \end{align*}
\end{restatable}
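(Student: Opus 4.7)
The plan is to follow the standard Catoni-style moment-generating-function argument (as in Lemma~\ref{lem:vanilla_catoni_local}) but exploit the tighter constraint in Assumption~\ref{assumption:improved_catoni_psi} to save a $(1-\Theta(\eta L))$ factor in the variance term of the MGF. Set $y_i := (x_i-\mu_0)/T$. Assumption~\ref{assumption:improved_catoni_psi} gives the pointwise upper bound $e^{\psi(y)} \le 1 + y + \tfrac{1}{2}y^2 - \tfrac{\eta}{2}y^2\,\1_{|y|\ge \beta/2}$, since we lose $\tfrac{\eta}{2}y^2$ in the relevant region and nothing outside. Taking expectations, multiplying across the $n$ independent samples, applying $1+z \le e^z$, and invoking Markov's inequality on $\exp\bigl(\sum_i \psi(y_i)\bigr)$ at level $1/\delta$ yields, with probability $1-\delta$,
\[
  \wh\mu - \mu \;\le\; \frac{\sigma^2 + (\mu-\mu_0)^2}{2T} \;-\; \frac{\eta}{2T}\,\E\!\bigl[(x-\mu_0)^2\,\1_{|x-\mu_0|\ge \beta T/2}\bigr] \;+\; \frac{T\log(1/\delta)}{n}.
\]
A matching bound on $\mu - \wh\mu$ comes from applying the same argument to $\exp\bigl(-\sum_i\psi(y_i)\bigr)$ using the lower bound in Assumption~\ref{assumption:improved_catoni_psi}.

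The heart of the proof is to lower-bound the trimmed second moment $M := \E\bigl[(x-\mu_0)^2\,\1_{|x-\mu_0|\ge \beta T/2}\bigr]$ by essentially $L\sigma^2$, given the inlier-light hypothesis $\E[(x-\mu)^2\,\1_{|x-\mu|\ge \beta T}] \ge L\sigma^2$. The factor-of-two gap between $\beta$ in the inlier-light definition and $\beta/2$ in the $\psi$-constraint is there precisely to absorb the $\mu\leftrightarrow\mu_0$ shift: since $|\mu-\mu_0| \le C_1\sigma\sqrt{\log(1/\delta)/n}$ while $T = \Theta(\sigma\sqrt{n/\log(1/\delta)})$, one has $|\mu-\mu_0|/T = O(\log(1/\delta)/n)$, which is much smaller than $\beta/2$ once $n \ge C_2\log(1/\delta)$ for $C_2$ sufficiently large. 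Hence $\{|x-\mu|\ge \beta T\} \subseteq \{|x-\mu_0|\ge \beta T/2\}$, and expanding $(x-\mu_0)^2 = (x-\mu)^2 + 2(\mu-\mu_0)(x-\mu) + (\mu-\mu_0)^2$, bounding the cross term via Cauchy-Schwarz together with the Chebyshev tail $\P[|x-\mu|\ge \beta T] \le \sigma^2/(\beta T)^2$, gives $M \ge L\sigma^2 - O\bigl(\sigma^2\log(1/\delta)/n\bigr)$.

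Plugging $M$ into the deviation bound, and absorbing $(\mu-\mu_0)^2/(2T)$ into the lower-order terms, we obtain $\wh\mu - \mu \le (1-\eta L)\cdot \sigma^2/(2T) + T\log(1/\delta)/n + O\bigl(\log(1/\delta)/n\bigr)\cdot \sigma^2/T$. With the prescribed $T = \sigma\sqrt{n/(2\log(2/\delta))}$ one has $\sigma^2/(2T) = \tfrac{1}{2}\sigma\sqrt{2\log(2/\delta)/n}$ and $T\log(1/\delta)/n \le \tfrac{1}{2}\sigma\sqrt{2\log(2/\delta)/n}$ (since $\log(1/\delta)/\log(2/\delta) < 1$), so the whole bound is at most $\bigl(1 - \eta L/2 + O(\log(1/\delta)/n)\bigr)\,\sigma\sqrt{2\log(2/\delta)/n}$, comfortably within the claimed $(1-\eta L/4 + C_2 \log(1/\delta)/n)$ factor. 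The main technical obstacle is exactly the bookkeeping in the previous paragraph: one must verify that the indicator shift, the cross term $2(\mu-\mu_0)(x-\mu)\,\1_A$, and the $(\mu-\mu_0)^2$ contributions all collapse to an $O(\log(1/\delta)/n)\cdot OPT_1$ residual, which works precisely because $n \gg \log(1/\delta)$ and because $\beta,L$ are constants independent of $n$ and $\delta$.
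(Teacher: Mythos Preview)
Your MGF setup is the same as the paper's, but there is a genuine gap in the ``heart of the proof'' paragraph: you have misstated the inlier-light hypothesis. The definition says
\[
\E\bigl[(x-\mu)^2\,\1_{|x-\mu|\le \beta T}\bigr] < (1-L)\sigma^2,
\]
which is an \emph{upper} bound on the inlier contribution. It does \emph{not} imply your claimed lower bound $\E[(x-\mu)^2\,\1_{|x-\mu|\ge \beta T}] \ge L\sigma^2$, because the total variance is only assumed $\le \sigma^2$, not $=\sigma^2$. Concretely: take a distribution supported entirely in $[\mu-\beta T/4,\,\mu+\beta T/4]$ with variance $(1-L-0.01)\sigma^2$. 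It is $(\beta,L)$-inlier-light, yet your $M = \E[(x-\mu_0)^2\,\1_{|x-\mu_0|\ge \beta T/2}] = 0$. Your displayed deviation bound then collapses to $\sigma^2/(2T)+T\log(1/\delta)/n \approx OPT_1$ with no $(1-\eta L/4)$ gain, because you already discarded the gain from small variance when you replaced $\E[(x-\mu_0)^2]$ by $\sigma^2+(\mu-\mu_0)^2$.

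The fix, which is what the paper does, is to avoid splitting into ``upper-bound the full second moment'' and ``lower-bound $M$'' as separate steps. Instead keep the combination
\[
\E[(x-\mu_0)^2] - \eta M \;=\; \E\bigl[(x-\mu_0)^2\,\1_{|x-\mu_0|<\beta T/2}\bigr] + (1-\eta)\E\bigl[(x-\mu_0)^2\,\1_{|x-\mu_0|\ge \beta T/2}\bigr]
\]
and rewrite it (after the $(x-\mu_0)^2 \to (x-\mu)^2$ expansion you already describe) as $(1-\eta)\,\mathrm{Var}(x) + \eta\,\E[(x-\mu)^2\,\1_{|x-\mu_0|<\beta T/2}] + (\text{small})$. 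Now the inlier-light bound applies directly to the second term (via the event inclusion you noted), giving $(1-\eta)\sigma^2 + \eta(1-L)\sigma^2 = (1-\eta L)\sigma^2$ plus lower-order terms. Everything else in your plan --- the indicator shift using $|\mu-\mu_0|\ll \beta T$, the cross-term control, the final arithmetic with the chosen $T$ --- is correct.
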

\subsection{Testing Inlier-Light vs. Outlier-Light}
Our strategy will be to first test whether our two-dimensional samples come from a distribution that is \emph{inlier-light}, or \emph{outlier-light}, and use an appropriate estimator accordingly. Our tester (Algorithm~\hyperref[lem:2d_inlier_outlier_testing]{\textsc{2DInlierOutlierLightTesting}}, described in Appendix~\ref{app:inlier_outlier_test}) takes in $n$ samples along with initial estimates $\mu_0^{e_1}, \mu_0^{e_2}$ of the mean in directions $e_1, e_2$ respectively, and \emph{either} identifies a direction $e_j$ in which the distribution is inlier-light, \emph{or} certifies that the distribution is outlier-light in every direction. Formally,

\begin{restatable}[Two-dimensional Inlier-Light vs. Outlier-Light Test]{lemma}{lemtwodinlieroutliertesting}
    \label{lem:2d_inlier_outlier_testing}
    For every constant $\beta < \frac{1}{8}$, $L > 8 \beta$, and $C_1 > 1$, there exists constant $C_2 > 1$ such that the following holds. Suppose $n > C_2 \log \frac{1}{\delta}$ and suppose our initial estimates $\mu_0^{e_j}$ satisfy $|\mu_0^{e_j} - \inner{e_j, \mu}| \le C_1 \sigma \sqrt{\frac{\log \frac{1}{\delta}}{n}}$ for $j \in \{1, 2\}$. We let $T = \sigma \sqrt{\frac{n}{2 \log \frac{4}{\delta}}}$.

    Given $n$ two-dimensional iid samples $x_1, \dots, x_n$ with mean $\mu$ and covariance $\Sigma \preccurlyeq \sigma^2 I_2$, with probability $1 - \delta$, Algorithm~\hyperref[alg:2d_inlier_vs_outlier_light-tester]{\textsc{2DInlierOutlierLightTester}} satisfies the following.
    \begin{itemize}
        \item If the output is $e_j$, $\inner{e_j, x_i}$ is $(\beta, L)$-inlier-light
        \item If the output is $\perp$, $x_i$ is $\left(16 \beta, 16 L\right)$-outlier-light.  (That is, $\inner{x_i, w}$ is $\left(16 \beta, 16 L\right)$-outlier-light for all unit vectors $w$.)
    \end{itemize}
\end{restatable}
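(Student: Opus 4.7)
The plan is to build a tester that, for each axis direction $j \in \{1,2\}$, estimates the one-dimensional inlier variance $v_j^{\text{in}} = \E[\inner{e_j, x-\mu}^2 \1_{|\inner{e_j, x-\mu}|\le \beta T}]$ by the truncated empirical quantity
\[
\hat v_j \;=\; \frac{1}{n}\sum_{i=1}^n \bigl(\inner{e_j, x_i} - \mu_0^{e_j}\bigr)^2 \,\1_{|\inner{e_j, x_i} - \mu_0^{e_j}|\le \beta T},
\]
compares against a threshold slightly below $(1-L)\sigma^2$, and outputs $e_j$ as soon as some $\hat v_j$ falls below it; if neither does, it outputs $\perp$.

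\textbf{Concentration and soundness of the two output cases.} Each summand of $\hat v_j$ is bounded by $\beta^2 T^2 = O(\beta^2 \sigma^2 n/\log(1/\delta))$ and has variance at most $\beta^2 T^2 \sigma^2$, so Bernstein's inequality together with a union bound over $j \in \{1,2\}$ gives $|\hat v_j - v_j^{\text{in}}| \le c_0 \beta \sigma^2$ with probability $1-\delta$, where the small additive error from replacing $\mu_j$ by $\mu_0^{e_j}$ (which moves each truncated summand by at most $|\mu_0^{e_j}-\mu_j|\cdot\beta T = O(\beta\sigma^2)$) is absorbed into $c_0$. Setting the threshold at $(1-L)\sigma^2 - 2c_0\beta\sigma^2$ then makes both directions of the test sharp, and the hypothesis $L > 8\beta$ ensures a comfortable gap. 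If the algorithm outputs $e_j$, then $v_j^{\text{in}} < (1-L)\sigma^2$, so $\inner{e_j, x}$ is $(\beta, L)$-inlier-light. If instead it outputs $\perp$, both $v_j^{\text{in}} \ge (1-L)\sigma^2 - 3c_0\beta\sigma^2 \ge (1-2L)\sigma^2$, and since $\Var(\inner{e_j, x-\mu})\le \sigma^2$ this forces $v_j^{\text{out}}(\beta T) \le 2L\sigma^2$ for $j\in\{1,2\}$.

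\textbf{Propagating to every direction, the main obstacle.} The harder step is to upgrade axis-directional outlier-lightness to $(16\beta, 16L)$-outlier-lightness in an arbitrary unit direction $w = a e_1 + b e_2$. Writing $y_j = \inner{e_j, x-\mu}$, Cauchy-Schwarz gives $(ay_1+by_2)^2 \le y_1^2+y_2^2$, and the event $\{|ay_1+by_2|\ge 16\beta T\}$ implies $y_1^2+y_2^2 \ge 256\beta^2 T^2$, hence $\{|y_1|\ge 8\sqrt 2\beta T\}\cup\{|y_2|\ge 8\sqrt 2\beta T\}$. Thus
\[
v_w^{\text{out}}(16\beta T) \;\le\; \E\bigl[(y_1^2+y_2^2)\1_{|y_1|\ge 8\sqrt 2\beta T}\bigr] + \E\bigl[(y_1^2+y_2^2)\1_{|y_2|\ge 8\sqrt 2\beta T}\bigr].
\]
Expanding, each of the two pieces contains a diagonal term $\E[y_j^2\1_{|y_j|\ge 8\sqrt 2\beta T}] = v_j^{\text{out}}(8\sqrt 2\beta T) \le 2L\sigma^2$ and a cross term $\E[y_j^2 \1_{|y_k|\ge 8\sqrt 2\beta T}]$ with $j\ne k$, which is where the two coordinates interact nontrivially. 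I would bound the cross term by splitting at $|y_j|=8\sqrt 2\beta T$: on $\{|y_j|\le 8\sqrt 2\beta T\}$ use $y_j^2 \le 128\beta^2 T^2$ combined with Chebyshev $\P[|y_k|\ge 8\sqrt 2\beta T] \le v_k^{\text{out}}(8\sqrt 2\beta T)/(8\sqrt 2\beta T)^2 \le 2L\sigma^2/(128\beta^2 T^2)$, and on $\{|y_j|> 8\sqrt 2\beta T\}$ absorb into $v_j^{\text{out}}(8\sqrt 2\beta T)$. Each contribution is $\le 2L\sigma^2$, giving cross terms $\le 4L\sigma^2$, so each piece is $\le 6L\sigma^2$ and altogether $v_w^{\text{out}}(16\beta T)\le 12L\sigma^2 < 16L\sigma^2$. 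The factor-$16$ inflation in both the threshold and the parameter is exactly what is needed so that the Cauchy-Schwarz loss and the two cross-term contributions fit inside $16L\sigma^2$; this bookkeeping is the only genuinely non-routine step.
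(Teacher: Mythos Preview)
Your proposal is correct. The one-dimensional testing step (truncated empirical inlier variance, Bernstein, absorbing the $\mu_0^{e_j}$-vs-$\mu_j$ shift into the constant) is essentially the paper's Lemma~\ref{lem:inlier_outlier_1d_testing}, so that part matches.

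The propagation step is where you differ. The paper isolates it as a separate deterministic lemma (Lemma~\ref{lem:1_direction_to_all_outlier_lightness}): given that $\langle e_1,x\rangle$ and $\langle e_2,x\rangle$ are each $(\beta',L')$-outlier-light, $x$ is $(4\beta',4L')$-outlier-light in every direction. The paper's argument is a one-liner: for any unit $w$ there is a coordinate $e_j$ with $|\langle w,e_j\rangle|\ge 1/2$, and the bound is read off from that single coordinate's outlier tail. Applied with $(\beta',L')=(4\beta,4L)$ from the 1D tester, this gives the stated $(16\beta,16L)$. Your route is more explicit: you pass through Cauchy--Schwarz $(ay_1+by_2)^2\le y_1^2+y_2^2$, expand, and bound the cross terms $\E[y_j^2\1_{|y_k|\ge c}]$ by a case split on $|y_j|\lessgtr c$ combined with Chebyshev on the tail probability. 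This costs a few more lines but is fully transparent about where each factor of $L$ is spent, and it lands safely at $12L\sigma^2<16L\sigma^2$. The paper's modular packaging (separating the 1D concentration lemma from the deterministic propagation lemma) is cleaner to reuse elsewhere; your integrated argument has the advantage that every inequality is visibly justified.
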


\subsection{Catoni-Based Estimator for Two-Dimensional Inlier-Light Distributions}
We recall the standard definition of a $\rho$-net of vectors over $\R^2$:

\begin{restatable}{assumption}{assumptionrhonetu}
    \label{assumption:rho_net_U}
    $U$ is a $\rho$-net of $O(1/\rho)$ unit vectors such that for every $v \in \S^1$, there exists a vector $u \in U$ with $\|v - u\| \le \rho$.
\end{restatable}

If our distribution over $\R^2$ is determined to be inlier-light in some direction $e_j$, we will make use of the following $2$-dimensional estimator.
\begin{algorithm}[H]\caption{\textsc{2DInlierLightEstimator}}
    \label{alg:two-d-catoni} 
    \vspace{0.2cm}
    \paragraph{Input parameters:}
    \begin{itemize}
        \item Failure probability $\delta$, Two-dimensional iid samples $x_1, \dots, x_n$, $\psi$ function, Scaling parameter $T$, Inlier-Outlier-Ligtness parameters $\beta, L$, Approximation parameters $0 < \xi, \tau < 1$, Set of unit vectors $U$, Initial estimates $\mu_0^u$ for $u \in U$. 
    \end{itemize}
    \begin{enumerate}
        \item For every $u \in U$, run Algorithm~\hyperref[alg:1d_inlier_vs_outlier_light_tester]{\textsc{1DInlierOutlierTester}} with samples $\inner{u, x_1}, \dots, \inner{u, x_n}$, Failure probability $\frac{\delta}{4|U|}$, initial estimate $\mu_0^u$, and Lightness parameters $\beta/32, L/32$. If the output is ``INLIER-LIGHT'', let $\alpha_u = 1 - \Theta(\tau)$. Otherwise, let $\alpha_u = 1 + \xi$.
        \item For every $u \in U$, run Algorithm~\hyperref[alg:catoni_local]{\textsc{CatoniEstimatorLocal}} with samples $\inner{u, x_1}, \dots, \inner{u, x_n}$, initial estimate $\mu_0^u$, and failure probability $\frac{\delta}{4|U|}$ and let the mean estimate obtained be $\wh \mu_u$. 
        \item For each $u \in U$, define set $S_u = \left\{w : |\inner{u, w} - \wh \mu_u| \le \alpha_u \cdot \sigma \sqrt{\frac{2 \log \frac{2}{\delta}}{n}}\right\}$. Let $S$ be the convex set given by $S := \cap_{u \in U} S_u$.
        \item Consider the minimum enclosing ball of set $S$ and return its center as the mean estimate $\wh \mu$.
    \end{enumerate}
\end{algorithm}
\hyperref[alg:two-d-catoni]{\textsc{2DInlierLightEstimator}} takes in a $\rho$-net $U$, in addition to the iid samples $x_1, \dots, x_n \in \R^2$ and failure probability $\delta$. For each net vector $u \in U$, it tests whether the distribution of the $\inner{u, x_i}$ is inlier-light, computes an estimate of the mean in direction $u$ using our $1$-d estimator for inlier-light distributions, and assigns a confidence interval accordingly. The final estimate $\wh \mu$ is the center of the minimum enclosing ball of the points that satisfy all $|U|$ confidence intervals. We show:

\begin{restatable}[Two-Dimensional Estimator for Inlier-Light Distributions]{lemma}{lemtwodinlierestimator}
    \label{lem:two-d-catoni}
    For every constant $0 <\beta<1/32, L > 32\beta$, and $C > 1$, there exist constants $\xi, \tau < 1$ such that the following holds. Suppose $n > O_\xi\left( \log \frac{1}{\delta}\right)$, and we have that $|\mu_0^u - \inner{u, \mu}| \le C \sigma \sqrt{\frac{\log \frac{1}{\delta}}{n}}$ for all $u \in U$. Suppose further that $\psi$ satisfies Assumption~\ref{assumption:improved_catoni_psi} for parameter $\beta/8$ and that $U$ satisfies Assumption~\ref{assumption:rho_net_U} for $\rho = \delta^{\Theta(\xi)}$. Let $T = \sigma \sqrt{\frac{n}{2 \log \frac{2}{\delta}}}$.

    Given $n$ two-dimensional iid samples $x_1, \dots, x_n$ with mean $\mu$ and covariance $\Sigma \preccurlyeq \sigma^2 I_2$ such that $\inner{e_k, x_i}$ is $(\beta, L)$-inlier-light, with probability $1 - \delta$, Algorithm~\hyperref[alg:two-d-catoni]{\textsc{2DInlierLightEstimator}} returns a mean estimate $\wh \mu$ with
    \begin{align*}
        \|\wh \mu - \mu\| \le (1 - \tau) \cdot JUNG_2 \cdot \sigma \sqrt{\frac{2 \log \frac{2}{\delta}}{n}}
    \end{align*}
\end{restatable}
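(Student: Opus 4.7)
The plan is to combine a union bound over the $|U|=O(\delta^{-\Theta(\xi)})$ net directions with Jung's theorem, exploiting the inlier-lightness hypothesis to strictly beat $JUNG_2$ by forcing at least one of the three Jung-triangle directions to receive a confidence slab narrower than $\sigma\sqrt{2\log(2/\delta)/n}$. First I would apply a one-dimensional analog of Lemma~\ref{lem:2d_inlier_outlier_testing}, Lemma~\ref{lem:improved_catoni_local}, and a vanilla Catoni guarantee to each $u\in U$ with failure probability $\delta/(4|U|)$. By union bound, with probability $1-\delta$, for every $u\in U$: if the test returned INLIER-LIGHT then $\inner{u,x_i}$ is $(\beta/16,L/16)$-inlier-light and Lemma~\ref{lem:improved_catoni_local} gives $|\inner{u,\wh\mu_u-\mu}|\le (1-\eta L/64)\cdot\sigma\sqrt{2\log(2/\delta)/n}$, while if the test returned OUTLIER-LIGHT the vanilla Catoni bound gives $|\inner{u,\wh\mu_u-\mu}|\le (1+\xi)\cdot\sigma\sqrt{2\log(2/\delta)/n}$. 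In either case $|\inner{u,\wh\mu_u-\mu}|\le\alpha_u\cdot\sigma\sqrt{2\log(2/\delta)/n}$, so $\mu\in S_u$ for every $u\in U$, hence $\mu\in S$, and $\|\wh\mu-\mu\|\le R$ where $R$ is the MEB radius of $S$.

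Next I would bound $R$ geometrically. Because $U$ is a $\rho$-net with $\rho=\delta^{\Theta(\xi)}$, for every unit direction $v$ some $u\in U$ satisfies $\|v-u\|\le\rho$, so $S$ has half-width at most $(\alpha_u+O(\rho))\sigma\sqrt{2\log(2/\delta)/n}$ in direction $v$. In two dimensions the Chebyshev center of a convex body is certified either by two antipodal support points or by three support points whose convex hull contains the center. In the two-point case, $R$ equals a single-direction half-width and is at most $(1+\xi+O(\rho))\sigma\sqrt{2\log(2/\delta)/n}$, which is already strictly smaller than $(1-\tau)JUNG_2\cdot\sigma\sqrt{2\log(2/\delta)/n}$ since $JUNG_2=2/\sqrt3>1$. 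In the three-point case the Jung-tight configuration has three support directions $v_1,v_2,v_3$ at exactly $120^\circ$, and a quantitative Jung calculation (expressing $R$ as essentially $JUNG_2$ times the average of the three slab half-widths) shows that if one $\alpha_{v_i}$ is $1-\Theta(\tau)$ while the other two are at most $1+\xi$, then $R\le JUNG_2\cdot\bigl(1+\tfrac{2\xi-\tau}{3}\bigr)\sigma\sqrt{2\log(2/\delta)/n}+O(\rho)\cdot\sigma\sqrt{2\log(2/\delta)/n}$. Choosing $\xi$ and $\rho$ small relative to $\tau$ yields $R\le(1-\tau')JUNG_2\cdot\sigma\sqrt{2\log(2/\delta)/n}$ for a constant $\tau'=\Theta(\tau)$.

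It remains to show that in the three-point case at least one of $v_1,v_2,v_3$ has $\alpha_{v_i}=1-\Theta(\tau)$. By pigeonhole, one of three directions spaced at $120^\circ$ must make angle at most $60^\circ$ with $\pm e_k$; call it $v^*$. The main obstacle is a propagation claim: $(\beta,L)$-inlier-lightness in direction $e_k$ implies $(\Theta(\beta),\Theta(L))$-inlier-lightness in direction $v^*$, which makes the one-dimensional test (calibrated at $\beta/32,L/32$) return INLIER-LIGHT and hence force $\alpha_{v^*}=1-\Theta(\tau)$. The plan for this propagation is to decompose $y=x-\mu$ as $\inner{e_k,y}=c_1\inner{v^*,y}+c_2\inner{(v^*)^\perp,y}$ with $|c_1|\ge 1/2$, further truncate the $e_k$-outlier event to $\{|\inner{e_k,y}|>C\beta T\}$ for a sufficiently large constant $C$, and use Chebyshev on $\inner{(v^*)^\perp,y}$ (whose variance is at most $\sigma^2$) to show the perpendicular component cannot cancel the $v^*$-component on this event except on a small-probability set. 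The difficulty lies precisely here: outliers in $e_k$ can partially cancel against the perpendicular component, so the propagation must lose constant factors in both $\beta$ and $L$ while still retaining enough outlier variance in $v^*$ to be detected by the one-dimensional test.
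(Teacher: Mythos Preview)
Your overall architecture—union bound over net directions, show $\mu\in S$, then bound the circumradius $R$—matches the paper. The gap is in the propagation step. Your claim that $(\beta,L)$-inlier-lightness of $e_k$ implies $(\Theta(\beta),\Theta(L))$-inlier-lightness of a \emph{single} direction $v^*$ with $|\langle v^*,e_k\rangle|\ge\tfrac12$ is false. Take the distribution whose deviation $x-\mu$ lies entirely along $v^*$ (at $60^\circ$ from $e_k$), with all mass within $\beta' T$ of the mean and variance exactly $\sigma^2$; this has $\Sigma\preceq\sigma^2 I_2$. Then $\langle e_k,x-\mu\rangle=\tfrac12\langle v^*,x-\mu\rangle$ has variance $\sigma^2/4$, all from inliers, so $e_k$ is $(\beta,L)$-inlier-light for any $L<3/4$. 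But $\langle v^*,x-\mu\rangle$ has inlier variance $\sigma^2$, so $v^*$ is not inlier-light for any $L>0$. Your Chebyshev sketch implicitly assumes inlier-light implies outlier-heavy, which fails exactly when the total $e_k$-variance is well below $\sigma^2$.

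The paper's resolution (Lemma~\ref{lem:one_direction_inlier_light_implies_two_triangle_directions_inlier_light}) uses all three triangle side-directions together with the constraint $|\langle v_i,v_j\rangle|\le\tfrac34$ for every pair. Under this constraint, \emph{two} of the $v_j$ satisfy $|\langle v_j,e_k\rangle|\ge\tfrac12$, and these two are themselves well-separated. A case split then works: if $\Var(\langle e_k,x\rangle)$ is small, one of the two $v_j$ must also have small variance (by their separation, they cannot both align with the top eigendirection), hence is inlier-light; if $\Var(\langle e_k,x\rangle)$ is near $\sigma^2$, then $e_k$ is genuinely outlier-heavy and the contrapositive of Lemma~\ref{lem:1_direction_to_all_outlier_lightness} pushes this to one of the two $v_j$. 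This is also why the paper does not assume the three directions sit at exactly $120^\circ$: it first disposes of the far-from-equilateral case (some $|\langle v_i,v_j\rangle|>\tfrac34$) by a direct circumradius bound, reserving the propagation lemma for the near-equilateral case where its separation hypothesis holds. Your reduction to exactly-$120^\circ$ directions and your ``$R\approx JUNG_2\cdot(\text{average }\alpha)$'' heuristic both skip this case analysis.
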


\subsection{Trimmed-Mean-Based Estimator for Two-Dimensional Outlier-Light Distributions}
\begin{algorithm}[H]\caption{\textsc{2DOutlierLightEstimator}}
    \label{alg:two_dimensional_trimmed_mean_estimator} 
    \paragraph{Input parameters:}
    \begin{itemize}
        \item Failure probability $\delta$, Two-dimensional samples $x_1, \dots, x_n$, Initial estimates $\mu_0^{e_1}, \mu_0^{e_2}$, Scaling parameter $T$, Approximation parameters $0<\beta, \xi < 1$.
    \end{itemize}
    \begin{enumerate}
        \item Consider the subset of samples $X'$ obtained by throwing out any sample $x_i$ with $|\inner{e_j, x_i} - \mu_0^{e_j}| > \sqrt{\beta} T$ for either $e_1$ or $e_2$. Return estimate $\wh \mu = \frac{1}{n} \sum_{i \in X'} x_i$.
    \end{enumerate}
\end{algorithm}
For \emph{outlier-light} distributions, \hyperref[alg:two_dimensional_trimmed_mean_estimator]{\textsc{2DOutlierLightEstimator}} computes a simple trimmed-mean estimate, throwing out any  point more than $\sqrt{\beta} T$ away from the initial mean estimate in the $e_1, e_2$ directions.
\begin{restatable}[Two-Dimensional Estimator for Outlier-Light Distributions]{lemma}{lemtwodoutlierlightestimator}
    \label{lem:2d_trimmed_mean}
    Define $T =\sigma \sqrt{\frac{n}{2 \log \frac{2}{\delta}}}$. For any constant $\beta < 1$, let $x_1, \dots, x_n$ be iid samples from a two-dimensional $(\beta, O(\beta))$-outlier-light distribution with mean $\mu$ and covariance $\Sigma \preccurlyeq \sigma^2 I_2$. Then, the output of Algorithm~\hyperref[alg:two_dimensional_trimmed_mean_estimator]{\textsc{2DOutlierLightEstimator}} when given as input initial estimates $\mu_0^j$ satisfying $|\mu_0^j - \inner{e_j, \mu}| \le O\left(\sigma \sqrt{\frac{\log \frac{1}{\delta}}{n}} \right)$ outputs estimate $\wh \mu$ satisfying with probability $1 - \delta$,
    \begin{align*}
        \|\wh \mu - \mu \| \le \left(1 + O\left(\sqrt \beta\right) \right) \cdot OPT_1
    \end{align*}
\end{restatable}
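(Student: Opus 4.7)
The plan is to bound the bias $\|\E[\wh\mu]-\mu\|$ and the fluctuation $\|\wh\mu-\E[\wh\mu]\|$ separately, showing the bias is $O(\sqrt{\beta})\cdot OPT_1$ and the fluctuation is $(1+O(\sqrt{\beta}))\cdot OPT_1$ with probability $1-\delta$, so the triangle inequality closes the argument. Shifting coordinates so that $\mu=0$, the hypothesis gives $|\mu_0^{e_j}|=O(\sigma\sqrt{\log(1/\delta)/n})=o(\sqrt{\beta}\,T)$ for constant $\beta$ and $n\gg\log(1/\delta)$. Thus the kept event $A_i=\{|\inner{e_j,x_i}-\mu_0^{e_j}|\le\sqrt{\beta}\,T,\ j=1,2\}$ differs only up to vanishing multiplicative slack from the centered event $\tilde A_i=\{|\inner{e_j,x_i}|\le(1+o(1))\sqrt{\beta}\,T,\ j=1,2\}$, so I may analyze $Y_i=x_i\1_{\tilde A_i}$ in place of $x_i\1_{A_i}$.

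For the bias, $\E[Y_i]=-\E[x_i\1_{\tilde A_i^c}]$. I decompose $\tilde A_i^c=B_i^{(1)}\cup B_i^{(2)}$ with $B_i^{(j)}=\{|\inner{e_j,x_i}|>(1+o(1))\sqrt{\beta}\,T\}$. Since $\sqrt{\beta}>\beta$ for $\beta<1$, the $(\beta,O(\beta))$-outlier-light hypothesis gives $\E[\inner{e_j,x_i}^2\1_{B_i^{(j)}}]=O(\beta)\sigma^2$, and Chebyshev restricted to the outlier-light tail yields $\P[B_i^{(j)}]=O(\log(1/\delta)/n)$. For any coordinate $k$, Cauchy--Schwarz bounds the main term $|\E[\inner{e_k,x_i}\1_{B_i^{(k)}}]|\le\sqrt{O(\beta)\sigma^2\cdot O(\log(1/\delta)/n)}=O(\sqrt{\beta})\cdot OPT_1$, while the disjoint cross term $|\E[\inner{e_k,x_i}\1_{B_i^{(3-k)}\setminus B_i^{(k)}}]|\le\sqrt{\beta}\,T\cdot\P[B_i^{(3-k)}]$ is of the same order since $\sqrt{\beta}\,T\cdot\log(1/\delta)/n=\Theta(\sqrt{\beta})\cdot OPT_1$. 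Hence $\|\E[\wh\mu]\|=O(\sqrt{\beta})\cdot OPT_1$.

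For the fluctuation, I fix a unit vector $v\in\S^1$ and consider the iid bounded variables $Z_i=\inner{v,Y_i}$, for which $|Z_i|\le\sqrt{2(1+o(1))\beta}\,T$ and $\Var(Z_i)\le\E[\inner{v,x_i}^2]\le\sigma^2$. Applying Bernstein's inequality at $t=(1+c\sqrt{\beta})\cdot OPT_1$ for a suitable constant $c$, the bounded-range contribution to the Bernstein denominator is $O(\sqrt{\beta}\,T\cdot OPT_1)=O(\sqrt{\beta})\sigma^2$, yielding an effective denominator $(1+O(\sqrt{\beta}))\sigma^2$; hence $|\frac1n\sum(Z_i-\E Z_i)|>t$ occurs for this fixed $v$ with probability at most $\delta'\ll\delta$. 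Union-bounding over a $\rho$-net $N\subset\S^1$ of size $O(1/\rho)$ with $\rho=\sqrt{\beta}$ and $\delta'=\rho\delta$ adds only a lower-order $\log(1/\rho)$ to the effective log-failure, and the standard convexity upgrade $\|\wh\mu-\E\wh\mu\|\le(1-\rho)^{-1}\sup_{v\in N}|\inner{v,\wh\mu-\E\wh\mu}|$ gives $\|\wh\mu-\E\wh\mu\|\le(1+O(\sqrt{\beta}))\cdot OPT_1$.

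The main obstacle is bookkeeping rather than any deep step: I must ensure that the three sources of slack --- replacing $A_i$ by $\tilde A_i$, the $O(\sqrt{\beta})$ truncation bias, and the Bernstein fluctuation with its bounded-range correction --- all compose into a single $(1+O(\sqrt{\beta}))$ factor on $OPT_1$. The key quantitative inputs are that outlier-lightness automatically controls both the tail probability and the tail second moment at the strictly larger scale $\sqrt{\beta}\,T$, and that the Bernstein range $B=O(\sqrt{\beta}\,T)$ times $t=O(OPT_1)$ contributes only an $O(\sqrt{\beta})\sigma^2$ correction to the variance term, keeping the optimal sub-Gaussian leading constant intact.
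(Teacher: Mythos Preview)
Your proposal is correct and follows essentially the same bias-plus-fluctuation strategy as the paper: bound the truncation bias by $O(\sqrt{\beta})\cdot OPT_1$ using outlier-lightness (your Cauchy--Schwarz bound $|\E[\inner{e_k,x}\1_{B^{(k)}}]|\le\sqrt{\E[\inner{e_k,x}^2\1_{B^{(k)}}]\cdot\P[B^{(k)}]}$ is a valid and slightly slicker alternative to the paper's tail-integral lemma), then apply Bernstein in each direction of a net and upgrade to the full norm. The one place to be more careful is the net parameter: with your choice $\rho=\sqrt{\beta}$, the additive $\log(1/\rho)=\Theta(\log(1/\beta))$ from the union bound is only ``lower-order'' when $\log(1/\delta)\gtrsim\log(1/\beta)/\sqrt{\beta}$, which the lemma does not assume; the paper sidesteps this by taking $\rho=\delta^{\Theta(\sqrt{\beta})}$, so that $\log|N|=\Theta(\sqrt{\beta})\log(1/\delta)$ is automatically absorbed into the $(1+O(\sqrt{\beta}))$ factor uniformly in $\delta$.
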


\subsection{Final Two-Dimensional Estimator}
Finally, we put together the previous parts to obtain our final Algorithm~\hyperref[alg:final_two_dimensional_estimator]{\textsc{2DHeavyTailedEstimator}}.
\begin{algorithm}[H]
\caption{\textsc{2DHeavyTailedEstimator}}
    \label{alg:final_two_dimensional_estimator} 
    \vspace{0.2cm}
    \paragraph{Input parameters:}
    \begin{itemize}
        \item Failure probability $\delta$, Two-dimensional samples $x_1, \dots, x_n$, $\psi$ function, Scaling parameter $T$, Inlier-Outlier-Lightness parameters $\beta, L$, Approximation parameters $0< \xi, \tau < 1$, set of unit vectors $U$
    \end{itemize}
    \begin{enumerate}
        \item Using $\Theta(\xi) n$ samples, compute Median-of-Means estimates $\mu_0^{e_j}$ of the one-dimensional samples $\inner{e_j, x_i}$ with failure probability $\frac{\delta}{4(|U|+2)}$ for each $j \in \{1, 2\}$.
        \item Using $\Theta(\xi) n$ samples, compute Median-of-Means estimates $\mu_0^u$ of the one-dimensional samples $\inner{u, x_i}$ with failure probability $\frac{\delta}{4(|U|+2)}$ for each $u \in U$.
        \item Let the set of the remaining $(1 - \Theta(\xi)) n$ samples be $X'$. Run Algorithm~\hyperref[alg:2d_inlier_vs_outlier_light-tester]{\textsc{2DInlierOutlierLightTester}} using failure probability $\delta/4$, the samples in $X'$ and initial estimates $\mu_0^{e_1}, \mu_0^{e_2}$.
        \item If the output of \hyperref[alg:2d_inlier_vs_outlier_light-tester]{\textsc{2DInlierOutlierLightTester}} is some $e_j$, run \hyperref[alg:two-d-catoni]{\textsc{2DInlierLightEstimator}} using failure probability $\delta/8$, the samples in $X'$, and the initial estimates $\mu_0^u$, and output its mean estimate $\wh \mu$.
        \item If instead the output of \hyperref[alg:2d_inlier_vs_outlier_light-tester]{\textsc{2DInlierOutlierLightTester}} is $\perp$, run \hyperref[alg:two_dimensional_trimmed_mean_estimator]{\textsc{2DOutlierLightEstimator}} using failure probability $\delta/4$, the samples in $X'$ and initial estimates $\mu_0^{e_1}, \mu_0^{e_2}$. Return its output $\wh \mu$.
    \end{enumerate}
\end{algorithm}
\begin{restatable}[Final Two-Dimensional Estimator]{theorem}{thmtwodestimator}
    \label{thm:2d_estimator}
    For any sufficiently small constant $\tau > 0$, there exist constants $0 <\xi, \beta, L < 1$ such that the following holds. Suppose $n > O_\xi(\log \frac{1}{\delta})$ and $T = \sigma \sqrt{\frac{n}{2 \log \frac{2}{\delta}}}$. Suppose the set $U$ is a $\rho$-net, satisfying Assumption~\ref{assumption:rho_net_U} for $\rho = \delta^{\Theta(\xi)}$.

    Given $n$ two-dimensional samples $x_1, \dots, x_n$ with mean $\mu$ and covariance $\Sigma \preccurlyeq \sigma^2 I_2$, with probability $1 - \delta$, Algorithm~\hyperref[alg:final_two_dimensional_estimator]{\textsc{2DHeavyTailedEstimator}} returns an estimate $\wh \mu$ with
    \begin{align*}
        \|\wh \mu - \mu\| \le (1 - \tau) \cdot JUNG_2 \cdot \sigma \sqrt{\frac{2 \log \frac{2}{\delta}}{n}}
    \end{align*}
\end{restatable}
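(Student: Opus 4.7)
The theorem is obtained by combining Lemmas~\ref{lem:2d_inlier_outlier_testing}, \ref{lem:two-d-catoni}, and~\ref{lem:2d_trimmed_mean} with median-of-means (MoM) initialization on a small hold-out sample. The entire proof reduces to (i) choosing the constants $\beta, L, \xi$ in the right order so the three sub-guarantees chain together, (ii) accounting for the $(1 - \Theta(\xi))$-sample shrinkage from the MoM hold-out, and (iii) a three-way union bound.

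\paragraph{Choice of constants.} Given a sufficiently small target $\tau > 0$, first pick $\beta < 1/32$ small enough that the bound from Lemma~\ref{lem:2d_trimmed_mean} satisfies $1 + O(\sqrt \beta) \leq (1 - 2\tau) \cdot JUNG_2$; this is feasible since $JUNG_2 = 2/\sqrt{3} > 1$. Set $L = 33 \beta$, so $L > 8\beta$ (Lemma~\ref{lem:2d_inlier_outlier_testing}) and $L > 32 \beta$ (Lemma~\ref{lem:two-d-catoni}) both hold. Lemma~\ref{lem:two-d-catoni} then produces some slack $\tau' > 0$; shrink $\tau$ if necessary so that $\tau \leq \tau'/2$. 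Choose $\xi = \xi(\tau)$ small enough that the sample-size replacement $n \mapsto (1-\Theta(\xi)) n$ in each downstream lemma multiplies the Gaussian rate by at most $(1 + \tau/10)$; take the net parameter $\rho = \delta^{\Theta(\xi)}$ as required, and let $\psi$ be any function meeting Assumption~\ref{assumption:improved_catoni_psi} for parameter $\beta/8$.

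\paragraph{Initialization and branching.} Using the first $\Theta(\xi) n$ samples, compute MoM estimates in each of the $|U|+2 = \delta^{-\Theta(\xi)}$ directions $\{e_1, e_2\} \cup U$, each at failure probability $\delta / (4(|U|+2))$. Because $\log(|U|/\delta) = O(\log(1/\delta))$ for small $\xi$, with probability at least $1 - \delta/4$ every $|\mu_0^u - \inner{u, \mu}| \leq O(\sigma \sqrt{\log(1/\delta)/n})$, matching the initial-estimate hypothesis of all three downstream lemmas. The remaining $(1-\Theta(\xi))n$ samples $X'$ are independent of these initializations, so the iid hypotheses of the lemmas apply verbatim on $X'$. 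Run the tester on $X'$ at failure probability $\delta/4$. In the inlier-light branch (tester returns some $e_j$), Lemma~\ref{lem:two-d-catoni} applied to \hyperref[alg:two-d-catoni]{\textsc{2DInlierLightEstimator}} at failure probability $\delta/8$ gives $\|\wh \mu - \mu\| \leq (1 - \tau') \cdot JUNG_2 \cdot \sigma \sqrt{2 \log(2/\delta) / |X'|}$, which is at most $(1-\tau')(1 + \tau/10) \cdot JUNG_2 \cdot OPT_1 \leq (1-\tau) \cdot JUNG_2 \cdot OPT_1$ by the choice of $\xi$ and $\tau \leq \tau'/2$. In the outlier-light branch (tester returns $\perp$), $X'$ is $(16\beta, 16L)$-outlier-light, and since $16L = 528\beta = O(16\beta)$, this is $(\beta', O(\beta'))$-outlier-light for $\beta' = 16\beta$, so Lemma~\ref{lem:2d_trimmed_mean} applied to \hyperref[alg:two_dimensional_trimmed_mean_estimator]{\textsc{2DOutlierLightEstimator}} at failure probability $\delta/4$ gives $\|\wh \mu - \mu\| \leq (1 + O(\sqrt{\beta'})) \cdot OPT_1 \leq (1 - \tau) \cdot JUNG_2 \cdot OPT_1$ by the choice of $\beta$. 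A union bound over initialization, tester, and whichever estimator is invoked yields total failure probability at most $\delta/4 + \delta/4 + \delta/4 < \delta$.

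\paragraph{Main obstacle.} No individual step is hard; the only subtlety is aligning the tester's $(16\beta, 16L)$-outlier-light output with the $(\beta', O(\beta'))$-outlier-light precondition of Lemma~\ref{lem:2d_trimmed_mean}, which forces $L = \Theta(\beta)$. Everything else -- the $(1-\Theta(\xi))$ sample split, ensuring $\log(|U|/\delta) = O(\log(1/\delta))$ so the MoM initializations stay in the scale the downstream lemmas expect, and the three-way union bound -- is routine bookkeeping on the constants.
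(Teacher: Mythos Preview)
The proposal is correct and follows essentially the same approach as the paper: MoM initialization on a hold-out, run the tester, branch to either the inlier-light or outlier-light estimator, and union-bound. You are in fact more careful than the paper on two points---you explicitly track the $(1-\Theta(\xi))n$ sample shrinkage (which the paper silently absorbs into $\xi$), and you correctly carry the tester's $(16\beta,16L)$ output into Lemma~\ref{lem:2d_trimmed_mean} (the paper's proof writes $(8\beta,8L)$ there, inconsistent with its own Lemma~\ref{lem:2d_inlier_outlier_testing}).
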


\begin{proof}
    First note that by classical results on Median-of-Means~\citep{mom_estimator} and a union bound, for every vector $v \in U \cup \{e_1, e_2\}$, we have with probability $1 - \delta/4$,
    \begin{align*}
        \left|\mu_0^v - \inner{v, \mu} \right| \le O\left(\sigma \sqrt{\frac{\log \frac{1}{\delta}}{n}} \right)
    \end{align*}
since $n > O_\xi( \log \frac{1}{\delta})$. For the remaining proof, we condition on the above. Now, by a union bound, there exist constants $0 < \beta, L < 1$, such that by Lemmas~\ref{lem:2d_inlier_outlier_testing},~\ref{lem:two-d-catoni}~and~\ref{lem:2d_trimmed_mean}, the following events happen with probability $1 - 3\delta/4$.
    \begin{itemize}
        \item If the output of Algorithm~\hyperref[alg:2d_inlier_vs_outlier_light-tester]{\textsc{2DInlierOutlierLightTester}} is $e_j$, $\inner{e_j, x_i}$ is $(\beta, L)$-inlier-light. On the other hand, if the output is $\perp$, $x_i$ is $(8 \beta, 8 L)$-outlier-light.
        \item If $\inner{e_j, x_i}$ is $(\beta, L)$-inlier-light, Algorithm~\hyperref[alg:two-d-catoni]{\textsc{2DInlierLightEstimator}} returns $\wh \mu$ with
        \begin{align*}
            \|\wh \mu - \mu\| &\le (1 - 2\tau + \Theta(\xi)) \cdot JUNG_2 \cdot \sigma \sqrt{\frac{2 \log \frac{2}{\delta}}{n}}\\
            &\le (1 - \tau)  \cdot JUNG_2 \cdot \sigma \sqrt{\frac{2 \log \frac{2}{\delta}}{n}}
        \end{align*}
        \item If $x_i$ is $(8\beta, 8L)$-outlier-light, for $L = O(\beta)$, Algorithm~\hyperref[alg:two_dimensional_trimmed_mean_estimator]{\textsc{2DOutlierLightEstimator}} returns $\wh \mu$ with
            \begin{align*}
                \| \wh \mu - \mu \| \le \left(1 + O\left(\sqrt{\beta} \right) \right) \cdot \sigma \sqrt{\frac{2 \log \frac{1}{\delta}}{n}}
            \end{align*}
    \end{itemize}
    So, with probability $1 - \delta$ in total, for $\beta$ small enough, 
    Algorithm~\hyperref[alg:final_two_dimensional_estimator]{\textsc{2DHeavyTailedEstimator}} returns estimate $\wh \mu$ with 
    \begin{align*}
        \| \wh \mu - \mu\| \le (1 - \tau) \cdot JUNG_2 \cdot \sigma \sqrt{\frac{2 \log \frac{2}{\delta}}{n}}
    \end{align*}
\end{proof}
\section{Open Questions}
Our work suggests a number of exciting avenues for future research. Some of these are:
\begin{itemize}
    \item What is the sharp rate for heavy tailed estimation when $\log \frac{1}{\delta} \gg d$? Our work establishes that it is not achieved by the naive strategy of aggregating one-dimensional estimates. Is it possible to achieve the Gaussian rate?
    \item Our upper and lower bounds are statistical---what about polynomial-time estimation? What are the sharp constants achievable, and is there a computational-statistical tradeoff? In particular,  for large $d$ no estimator achieving even the $JUNG_d \approx \sqrt{2}$ factor loss is known---current polynomial-time estimators~\citep{hopkins_subgaussian,pmlr-v99-cherapanamjeri19b} rely on the median-of-means framework, which loses constants even in one dimension.
    \item What are the sharp rates for other estimation problems under heavy-tailed noise? For instance, covariance estimation or regression?
\end{itemize}


\section*{Acknowledgements}
SBH was funded by NSF CAREER award no. 2238080 and MLA@CSAIL.  SG and
EP were funded by NSF awards CCF-2008868 and CCF-1751040 (CAREER).

\bibliographystyle{alpha}
\bibliography{references}
\appendix
\section{Vanilla One-Dimensional Catoni Estimator}\label{sec:catoni}
We first describe a variant of Catoni's one-dimensional estimator~\cite{catoni} for bounded variance distributions.

\setcounter{algorithm}{0}
\begin{algorithm}
\caption{\textsc{CatoniEstimatorLocal}}
    \vspace{0.2cm}
    \paragraph{Input parameters:}
    \begin{itemize}
        \item Failure probability $\delta$, One-dimensional iid samples $x_1, \dots, x_n$, Initial estimate $\mu_0$, $\psi$ function, Scaling parameter $T$.
    \end{itemize}
    \begin{enumerate}
        \item Compute
            \begin{align*}
                r(\mu_0) = \frac{T}{n} \sum_{i=1}^n \psi\left(\frac{x_i - \mu_0}{T} \right)
            \end{align*}
        \item Return mean estimate $\wh \mu = r(\mu_0) + \mu_0$
    \end{enumerate}
\end{algorithm}
\begin{assumption}
\label{assumption:vanilla_catoni_psi}
$\psi$ satisfies that for all $x$,
\begin{align*}
-\log\left(1 - x + \frac{x^2}{2}\right)\le \psi(x) \le \log\left(1 + x + \frac{x^2}{2} \right)
\end{align*}
\end{assumption}

\begin{lemma}
    \label{lem:vanilla_catoni_local}
    For every constant $C_1 > 1$ there exists constant $C_2 > 1$ such that the following holds. Suppose $\psi$ satisfies Assumption~\ref{assumption:vanilla_catoni_psi}, $n > C_2 \log \frac{1}{\delta}$, and we have an initial estimate $\mu_0$ with $|\mu_0 - \mu| \le C_1 \sigma \sqrt{\frac{\log \frac{1}{\delta}}{n}}$.   
  We let $T = \sigma \sqrt{\frac{n}{2 \log \frac{2}{\delta}}}$. Given $n$ one-dimensional iid samples $x_1, \dots, x_n$ with mean $\mu$ and variance at most $\sigma^2$, with probability $1 - \delta$, the output $\wh \mu$ of Algorithm~\hyperref[alg:catoni_local]{\textsc{CatoniEstimatorLocal}} satisfies
    \begin{align*}
        \left|\wh \mu - \mu \right| \le \left(1 + C_2 \frac{\log \frac{1}{\delta}}{n} \right) \cdot \sigma \cdot \sqrt{\frac{2 \log \frac{2}{\delta}}{n}}
    \end{align*}
\end{lemma}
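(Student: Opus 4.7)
The plan is a Chernoff/MGF argument driven by the upper and lower constraints on $\psi$. Write $\wh\mu - \mu = (\mu_0 - \mu) + (T/n)\sum_{i=1}^n \psi((x_i - \mu_0)/T)$, and apply Markov's inequality to $\exp(\lambda(\wh\mu - \mu))$ with the natural choice $\lambda = n/T$. This yields $\E[\exp(\lambda(\wh\mu - \mu))] = e^{n(\mu_0 - \mu)/T} \prod_{i=1}^n \E[\exp(\psi((x_i - \mu_0)/T))]$, and crucially isolates the role of $\psi$ to a per-sample MGF bound.

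For each per-sample expectation, Assumption~\ref{assumption:vanilla_catoni_psi} gives $\psi(y) \le \log(1 + y + y^2/2)$, so $\E[\exp(\psi((x_i - \mu_0)/T))] \le 1 + (\mu - \mu_0)/T + (\sigma^2 + (\mu - \mu_0)^2)/(2T^2)$, where I use that $\E[(x_i - \mu_0)^2] = \Var(x_i) + (\mu-\mu_0)^2 \le \sigma^2 + (\mu - \mu_0)^2$. Invoking $1 + z \le e^z$ and taking the product, the linear-in-$(\mu_0 - \mu)$ contribution in the exponent combines with the $e^{n(\mu_0 - \mu)/T}$ prefactor to cancel \emph{exactly}, leaving $\E[\exp(\lambda(\wh\mu - \mu))] \le \exp(n(\sigma^2 + (\mu-\mu_0)^2)/(2T^2))$.

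Next, substituting $T = \sigma\sqrt{n/(2\log(2/\delta))}$ makes $n\sigma^2/(2T^2) = \log(2/\delta)$, and Markov yields $\P[\wh\mu - \mu > t] \le \exp(-nt/T + \log(2/\delta) + n(\mu - \mu_0)^2/(2T^2))$. Forcing this below $\delta/2$ requires $t \ge 2T\log(2/\delta)/n + (\mu - \mu_0)^2/(2T)$. A direct computation shows the first term equals exactly $\sigma\sqrt{2\log(2/\delta)/n}$, which is the target Gaussian rate. For the second, plugging in $|\mu - \mu_0| \le C_1\sigma\sqrt{\log(1/\delta)/n}$ and unpacking $T$ gives $(\mu - \mu_0)^2/(2T) \le (C_1^2/2) \cdot (\log(1/\delta)/n) \cdot \sigma\sqrt{2\log(2/\delta)/n}$, which is precisely the multiplicative slack $1 + O(\log(1/\delta)/n)$ claimed in the lemma. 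Choosing $C_2 = \Theta(C_1^2)$ suffices.

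The lower tail follows by the symmetric argument using $\psi(y) \ge -\log(1 - y + y^2/2)$ applied to $-\wh\mu$, and a union bound gives total failure probability $\delta$. There is no real ``hard part'' beyond bookkeeping; the key simplification that makes this cleaner than Catoni's original fixed-point proof is the exact cancellation of the linear $(\mu_0 - \mu)/T$ terms between the prefactor and the per-sample expansion, which is precisely why the algorithm adds $\mu_0$ back at the end rather than locating a root. The only place where the initial-estimate quality enters is through the $(\mu-\mu_0)^2/(2T^2)$ variance-like contribution, and showing that it only perturbs the rate by the lower-order $\log(1/\delta)/n$ multiplicative factor is the single step that actually uses the assumption $|\mu_0 - \mu| \lesssim \sigma\sqrt{\log(1/\delta)/n}$.
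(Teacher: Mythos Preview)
Your proof is correct and essentially identical to the paper's: both run a Chernoff/MGF argument at scale $\lambda = n/T$, use the $\psi$ bounds to control the per-sample MGF by $1 + (\mu-\mu_0)/T + (\sigma^2 + (\mu-\mu_0)^2)/(2T^2)$, invoke $1+z\le e^z$, and then Markov plus the symmetric lower-tail computation. The only cosmetic difference is that the paper bounds $r(\mu_0) - (\mu - \mu_0)$ directly while you phrase it as an ``exact cancellation'' after multiplying in the $e^{n(\mu_0-\mu)/T}$ prefactor; these are the same calculation, and both arrive at $C_2 = \Omega(C_1^2)$.
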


\begin{proof}
    By Assumption~\ref{assumption:vanilla_catoni_psi}, we have
    \begin{align*}
        \E\left[\exp\left( \frac{n}{T} r(\mu_0) \right) \right] &= \prod_{i=1}^n \E\left[\exp\left(\psi\left(\frac{x_i - \mu_0}{T} \right)\right) \right]\\
        &\le \prod_{i=1}^n \E\left[\left( 1 + \frac{x_i - \mu_0}{T} + \frac{(x_i - \mu_0)^2}{2 T^2}\right)\right]\\
        &= \left(1 + \frac{\mu - \mu_0}{T} + \frac{1}{2T^2} \cdot \left[\sigma^2 + (\mu - \mu_0)^2 \right]  \right)^n\\
        &\le \exp\left(\frac{n}{T} (\mu - \mu_0) + \frac{n }{2T^2} \cdot \left[ \sigma^2 + (\mu - \mu_0)^2 \right] \right)
    \end{align*}
    So, by Markov's inequality, for $T = \sigma \sqrt{\frac{n}{2 \log \frac{2}{\delta}}}$, we have
    \begin{align*}
        &\Pr\left[r(\mu_0) \ge (\mu - \mu_0) + \sigma \cdot \sqrt{\frac{2 \log \frac{2}{\delta}}{n}}\left(1 + C_2 \frac{\log \frac{1}{\delta}}{n} \right) \right] \\
        &\le \exp\left(\frac{n}{2T^2} \cdot \left[\sigma^2 + (\mu - \mu_0)^2 \right] - \frac{n}{T} \cdot \sigma \cdot \sqrt{\frac{2 \log \frac{2}{\delta}}{n}}\left(1 + C_2 \frac{\log \frac{1}{\delta}}{n} \right) \right)\\
        &\le \exp\left(- \log \frac{2}{\delta}\right)\\
        &= \delta/2
    \end{align*}
    since $|\mu - \mu_{0}| \le C_1\sigma \sqrt{\frac{\log \frac{1}{\delta}}{n}} $, for $C_2 \ge \Omega(C_1^2)$. 

    Similarly, for the lower tail, the MGF is given by
    \begin{align*}
        \E\left[\exp\left(- \frac{n}{T} r(\theta_0) \right) \right] &\le \prod_{i=1}^n \E\left[\left(1 - \frac{x_i - \mu_0}{T} + \frac{(x_i - \mu_0)^2}{2T^2} \right) \right]\\
        &\le \exp\left( -\frac{n}{T}  (\mu - \mu_0) + \frac{n }{2T^2} \left[\sigma^2 + (\mu - \mu_0)^2 \right]\right)
    \end{align*}
    So, by Markov's inequality, for $T = \sigma \cdot \sqrt{\frac{n}{2 \log \frac{2}{\delta}}}$, we have
    \begin{align*}
        \Pr\left[r(\mu_0) \le (\mu - \mu_0) - \sigma \cdot \sqrt{\frac{2 \log \frac{2}{\delta}}{n}}\left(1 + C_2 \frac{\log \frac{1}{\delta}}{n} \right) \right] \le \delta/2
    \end{align*}
    Then, taking a union bound gives the claim.
\end{proof}
\setcounter{algorithm}{4}

\begin{algorithm}[H]\caption{\textsc{CatoniEstimator}}
    \label{alg:catoni_global} 
    \paragraph{Input parameters:}
    \begin{itemize}
        \item Failure probability $\delta$, One-dimensional iid samples $x_1, \dots, x_n$, $\psi$ function, Scaling parameter $T$, Approximation parameter $0 < \xi < 1$.
    \end{itemize}
    \begin{enumerate}
        \item Use the first $\Theta(\xi n)$ samples to compute the Median-of-Means estimate $\mu_0$ with failure probability $\delta/2$.
        \item Return the result $\wh \mu$ of Algorithm~\hyperref[alg:catoni_local]{\textsc{CatoniEstimatorLocal}} using initial estimate $\mu_0$, and the remaining $(1 - \Theta(\xi)) n$ samples, and failure probability $\delta/2$.
    \end{enumerate}
\end{algorithm}

\begin{theorem}
    \label{thm:1d_improved_catoni}
    For every constant $\xi > 0$, suppose $n > O_{\xi}(\log \frac{1}{\delta})$, $\psi$ satisfies Assumption~\ref{assumption:vanilla_catoni_psi}, and consider $T = \sigma \sqrt{\frac{n}{2 \log \frac{4}{\delta}}}$. Given $n$ one-dimensional iid samples $x_1, \dots, x_n$ with mean $\mu$ and variance at most $\sigma^2$, with probability $1 - \delta$, the output $\wh \mu$ of Algorithm~\hyperref[alg:catoni_global]{\textsc{CatoniEstimator}} satisfies
    \begin{align*}
        \left|\wh \mu - \mu \right| \le \left(1 + \xi \right) \cdot \sigma \cdot \sqrt{\frac{2 \log \frac{4}{\delta}}{n}}
    \end{align*}
\end{theorem}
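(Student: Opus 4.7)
The plan is to simply compose the two ingredients and control the small losses introduced by sample-splitting. First, I would apply the classical high-probability guarantee for the Median-of-Means estimator on the initial $\Theta(\xi n)$ samples: with failure probability at most $\delta/2$, this yields an initial estimate $\mu_0$ with
\[
|\mu_0 - \mu| \le C_1(\xi) \cdot \sigma \sqrt{\frac{\log(1/\delta)}{n}},
\]
where $C_1(\xi)$ depends only on $\xi$, since the Median-of-Means constant swallows the $1/\sqrt{\xi}$ loss from using only a $\Theta(\xi)$-fraction of the samples. Critically, $\mu_0$ is independent of the remaining $n' = (1 - \Theta(\xi)) n$ samples, so we may condition on the event above and treat those samples as a fresh i.i.d. batch.

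Next, I would invoke Lemma~\ref{lem:vanilla_catoni_local} on the remaining $n'$ samples with failure probability $\delta/2$, initial estimate $\mu_0$, threshold $T' = \sigma \sqrt{n'/(2\log(4/\delta))}$, and the constant $C_1 = C_1(\xi)$ from the previous step. Provided $n' \ge C_2(\xi) \log(1/\delta)$, which holds whenever $n \gtrsim_{\xi} \log(1/\delta)$, the lemma produces $C_2 = C_2(\xi)$ and guarantees with probability $1 - \delta/2$
\[
|\wh\mu - \mu| \le \Bigl(1 + C_2 \frac{\log(1/\delta)}{n'}\Bigr) \cdot \sigma \sqrt{\frac{2 \log(4/\delta)}{n'}}.
\]

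To finish, I would compare this error to the bound claimed in the theorem, which uses $n$ rather than $n'$. Writing $n' = (1 - \Theta(\xi)) n$, we get $\sqrt{n/n'} = 1 + \Theta(\xi)$, and the factor $1 + C_2 \log(1/\delta)/n'$ is $1 + O_\xi(\log(1/\delta)/n) = 1 + o_\xi(1)$ in the regime $n \gg_\xi \log(1/\delta)$. Choosing the hidden constant in $\Theta(\xi)$ sufficiently small and $n$ sufficiently large as a function of $\xi$ ensures the total multiplicative overhead is at most $1 + \xi$, matching the target bound $(1+\xi) \sigma \sqrt{2\log(4/\delta)/n}$. A final union bound over the two failure events of probability $\delta/2$ each yields the claim.

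The only mild subtlety is the bookkeeping of threshold scaling: Lemma~\ref{lem:vanilla_catoni_local} is stated with its own $T$ tied to the number of samples it is run on, so one must pass $T' \neq T$ into the subroutine and then translate the guarantee back into the $T$-normalized bound of the theorem; this is exactly what absorbs the $(1-\Theta(\xi))^{-1/2}$ factor into the final $(1+\xi)$. There is no substantive obstacle here; the result is effectively a black-box composition of Median-of-Means with Lemma~\ref{lem:vanilla_catoni_local}.
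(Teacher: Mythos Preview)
The proposal is correct and follows exactly the same approach as the paper's proof: apply Median-of-Means on the first $\Theta(\xi n)$ samples to obtain a coarse initial estimate, then invoke Lemma~\ref{lem:vanilla_catoni_local} on the remaining $(1-\Theta(\xi))n$ samples at failure probability $\delta/2$, and union bound. Your write-up is in fact more careful than the paper's (which is a two-sentence sketch), particularly in tracking the $n$ versus $n'$ normalization and the threshold $T'$ passed to the subroutine.
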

\begin{proof}
    First, by classical results on Median-of-Means \cite{mom_estimator}, $\mu_0$ satisfies with probability $1 - \delta/2$,
    \begin{align*}
        |\mu_0 - \mu| \le C \sigma \sqrt{\frac{\log \frac{1}{\delta}}{n \xi}}
    \end{align*}
    for some constant $C > 0$. Then, invoking Lemma~\ref{alg:catoni_local} using the remaining $(1 - \Theta(\xi))n$ samples and failure probability $\delta/2$ gives the claim.
\end{proof}

\section{Improved Heavy-Tailed Estimator}
We will make use of the following notions of ``$(\beta, L)$-inlier-light'' and ``$(\beta, L)$-outlier-light'' distributions throughout this section.
\inlierlightdef*
\outlierlightdef*

\subsection{Improved One-Dimensional Catoni-Based Estimator when Inlier-Light}

The following assumption on $\psi$ functions is stronger than the Catoni requirement, and allows for a more accurate estimate when a distribution is $(\beta, L)$-inlier-light.
\inliercatonipsidef*

There exist $\psi$ functions such that for every $\beta > 0$, there exists $\eta > 0$ such that Assumption~\ref{assumption:improved_catoni_psi} is satisfied.
One such function is $\psi(x) = x - x^3/6$ for $\abs{x} \leq 1$ and $\psi(x) = \frac{5}{6} \sign(x)$ for $\abs{x} > 1$.
For $\abs{x} \lesssim 1$, there is $\Theta(x^4)$ flexibility in the choice of $\psi(x)$.

%
\lemonedinlierlightcatoni*
\begin{proof}
    By Assumption~\ref{assumption:improved_catoni_psi}, we have
   \begin{align*}
       &\E\left[\exp\left( \frac{n}{T} r(\mu_{0})\right) \right] \\
       &= \prod_{i=1}^n \E\left[\exp\left(\psi\left(\frac{x_i - \mu_0}{T} \right)\right) \right]\\
       &= \prod_{i=1}^n \left(\E\left[\exp\left(\psi\left(\frac{x_i - \mu_{0}}{T} \right)\right) \1_{|x_i - \mu_0| \le \beta T/2 } \right] + \E\left[\exp\left(\psi\left(\frac{x_i - \mu_{0}}{T} \right)\right) \1_{|x_i - \mu_0| > \beta T/2} \right] \right)\\
       &\le \prod_{i=1}^n \left(1 + \E\left[\frac{x_i - \mu_0}{T} \right] + \frac{1}{2 T^2}\left(\E\left[\left(x_i - \mu_0\right)^2 \1_{|x_i - \mu_0| \le \beta T/2}\right] + (1 - \eta) \E\left[ (x_i - \mu_0)^2 \1_{|x_i - \mu_0| > \beta T/2}\right]\right) \right)\\
   \end{align*} 
   Now, since $x_i$ is $(\beta, L)$-inlier-light, so that $\E\left[(x_i - \mu)^2 \1_{|x_i - \mu| \le \beta T} \right] < (1-L) \sigma^2$, we have
   \begin{align*}
        \E\left[(x_i - \mu)^2 \1_{|x_i - \mu_0| \le \beta T/2} \right] \le \E\left[(x_i - \mu)^2 \1_{|x_i - \mu| \le \beta T} \right] \le (1-L) \sigma^2
   \end{align*}
   since $|\mu - \mu_0| \le O\left(\sigma \sqrt{\frac{\log \frac{1}{\delta}}{n}} \right) \le \beta T/2$. So,
   \begin{align*}
       &\E\left[\left(x_i - \mu_0\right)^2 \1_{|x_i - \mu_0| \le \beta T/2}\right] + (1 - \eta) \E\left[ (x_i - \mu_0)^2 \1_{|x_i - \mu_0| > \beta T/2}\right] \\
       &\le (\mu - \mu_0)^2 + \E\left[(x_i - \mu)^2 \1_{|x_i - \mu_0| \le \beta T/2} \right] + (1-\eta)\E\left[(x_i - \mu)^2 \1_{|x_i - \mu_0| > \beta T/2} \right] \\
       &\quad- 2 \eta \E\left[(x_i - \mu)(\mu - \mu_0) \1_{|x_i - \mu_0| > \beta T/2} \right]\\
       &\le \left(1 - \frac{\eta L}{2}\right) \sigma^2
   \end{align*}
   since $n > C_2 \log \frac{1}{\delta}$.
   So,
   \begin{align*}
       \E\left[\exp\left(\frac{n}{T} r(\mu_0) \right) \right] &\le \prod_{i=1}^n \left(1 + \frac{\mu - \mu_0}{T} + \frac{\sigma^2}{2 T^2}\left(1 - \frac{\eta L}{2} \right) \right)\\
       &\le \exp\left(n \cdot \frac{\mu - \mu_0}{T} + \frac{n\sigma^2}{2 T^2} \left(1 - \frac{\eta L}{2} \right) \right)
   \end{align*}
   Then, by Markov's inequality, for $T = \sigma \sqrt{\frac{n}{2 \log \frac{2}{\delta}}}$,
   \begin{align*}
       &\Pr\left[r(\mu_0) \ge (\mu - \mu_{0}) + \left(1 - \frac{\eta L}{4} + \frac{C_2 \log \frac{1}{\delta}}{n} \right) \cdot \sigma \cdot \sqrt{\frac{2 \log \frac{2}{\delta}}{n}}\right]\\ &\le \exp\left(\frac{n \sigma^2}{2 T^2}\left(1 - \frac{\eta L}{2} \right)  - \frac{n}{T} \cdot \left(1 - \frac{\eta L}{4} + \frac{C_2 \log \frac{1}{\delta}}{n} \right) \cdot \sigma \cdot \sqrt{\frac{2 \log \frac{2}{\delta}}{n}} \right)\\
       &\le \exp\left(- \log \frac{2}{\delta}\right)\\
       &= \delta/2
   \end{align*}
   since $|\mu - \mu_0| \le C_1 \sigma \sqrt{\frac{2 \log \frac{1}{\delta}}{n}}$, for $C_2 \ge \Omega(C_1^2)$.
   
   Similarly, for the lower tail, the MGF is given by
   \begin{align*}
       \E\left[\exp\left( - \frac{n}{T} r(\mu_0) \right) \right] \le \exp\left(- n \cdot \frac{\mu - \mu_0}{T} + \frac{n\sigma^2}{T^2} \left(1 - \frac{\eta L}{2} \right)\right)
   \end{align*}
    so that by Markov's inequality,
    \begin{align*}
        \Pr\left[r(\mu_0) \le (\mu - \mu_{0}) - \left(1 - \frac{\eta L}{4} \right) \cdot \sigma \cdot \sqrt{\frac{2 \log \frac{2}{\delta}}{n}} \right] \le \delta/2
    \end{align*}
    for $T = \sigma \sqrt{\frac{n}{2 \log \frac{2}{\delta}}}$.
    
    Taking a union bound gives the claim.
\end{proof}

\subsection{Testing Inlier-Light vs. Outlier-Light}\label{app:inlier_outlier_test}
\begin{algorithm}[H]\caption{\textsc{1DInlierOutlierLightTester}}
    \label{alg:1d_inlier_vs_outlier_light_tester} 
    \paragraph{Input parameters:}
    \begin{itemize}
        \item Failure Probability $\delta$, One-dimensional iid samples $x_1, \dots, x_n$, Scaling parameter $T$, Inlier-Outlier-Lightness parameters $\beta, L$, Initial estimate $\mu_0$.
    \end{itemize}
    \begin{enumerate}
        \item Compute \[B = \frac{1}{n} \sum_{i=1}^n (x_i - \mu_0)^2 \1_{|x_i - \mu_0| \le 2 \beta T}\]
        \item If $B \le \left(1 - 2 L \right) \sigma^2$, return ``INLIER-LIGHT''. Otherwise return ``OUTLIER-LIGHT''.
    \end{enumerate}
\end{algorithm}

\begin{lemma}
    \label{lem:inlier_outlier_1d_testing}
    For every constant $\beta < 1/16, L > 16 \beta$ and $C_1 > 1$, there exists constant $C_2 > 1$ such that the following holds. Suppose $n > C_2 \log \frac{1}{\delta}$, and we have that $|\mu_0 - \mu| \le C_1 \sigma \sqrt{\frac{\log \frac{1}{\delta}}{n}}$. We let $T = \sigma \sqrt{\frac{n}{2 \log \frac{2}{\delta}}}$.
    
    Given $n$ one-dimensional iid samples $x_1, \dots, x_n$, with mean $\mu$ and variance at most $\sigma^2$, with probability $1-\delta$, we have that
    \begin{itemize}
        \item If Algorithm~\hyperref[alg:1d_inlier_vs_outlier_light_tester]{\textsc{1DInlierOutlierLightTester}} returns ``INLIER-LIGHT'', then $x_i$ is $(\beta, L)$-inlier-light, 
        \item If Algorithm~\hyperref[alg:1d_inlier_vs_outlier_light_tester]{\textsc{1DInlierOutlierLightTester}} returns ``OUTLIER-LIGHT'', then $x_i$ is $(4 \beta, 4 L)$-outlier-light
    \end{itemize}
\end{lemma}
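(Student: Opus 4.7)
The plan is to apply Bernstein's inequality to concentrate $B$ around its expectation, and then sandwich $\mathbb{E}[B]$ between the two truncated second moments that appear in the definitions of inlier-light and outlier-light. Throughout, the hypothesis $n > C_2 \log \frac{1}{\delta}$ for large $C_2$ lets us absorb lower-order terms.

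First I would handle the mismatch between the indicator $\1_{|x_i - \mu_0| \le 2\beta T}$ used by the algorithm and the indicators $\1_{|x_i - \mu| \le c\beta T}$ used in the definitions. Since
\[
  \frac{|\mu_0 - \mu|}{\beta T} \;\le\; \frac{C_1}{\beta}\sqrt{\frac{2\log \frac{1}{\delta} \log \frac{2}{\delta}}{n \cdot n}} \cdot \sqrt{n} \;=\; O_{\beta,C_1}\!\left(\sqrt{\tfrac{\log\frac{1}{\delta}}{n}}\right),
\]
for $C_2$ large enough we have $|\mu_0-\mu| \le \beta T$, and therefore the pointwise containment
\[
  \1_{|x_i-\mu|\le \beta T} \;\le\; \1_{|x_i-\mu_0|\le 2\beta T} \;\le\; \1_{|x_i-\mu|\le 4\beta T}
\]
holds deterministically.

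Next I would apply Bernstein's inequality to $B=\tfrac{1}{n}\sum_i Y_i$ with $Y_i:=(x_i-\mu_0)^2 \1_{|x_i-\mu_0|\le 2\beta T}$. Each $Y_i$ is bounded by $M=4\beta^2 T^2$, and its variance is controlled by $\Var(Y_i)\le M\cdot \mathbb{E}[Y_i]\le 4\beta^2 T^2(\sigma^2+(\mu-\mu_0)^2)\le 8\beta^2 T^2\sigma^2$. Plugging in $T^2 = \sigma^2 n/(2\log\frac{2}{\delta})$, Bernstein gives, for $t=L\sigma^2/4$,
\[
  \Pr[|B-\mathbb{E}[B]|>t] \;\le\; 2\exp\!\left(-\frac{nt^2}{2v+ 2Mt/3}\right) \;\le\; 2\exp\!\left(-\frac{L^2}{256\beta^2}\log\tfrac{2}{\delta}\right),
\]
which is at most $\delta$ whenever $L>16\beta$ (the $Mt$ term is dominated by $v$ since $L<1$). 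So with probability $1-\delta$, $|B-\mathbb{E}[B]|\le L\sigma^2/4$.

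After that, I would relate $\mathbb{E}[B]$ to the truncated variance around $\mu$. Expanding $(x-\mu_0)^2=(x-\mu)^2+2(x-\mu)(\mu-\mu_0)+(\mu-\mu_0)^2$ and using Cauchy--Schwarz (together with $|\mu-\mu_0|=O(\sigma\sqrt{\log\frac{1}{\delta}/n})$), the cross-term and square-term contribute at most $O_{C_1}\!\big(\sigma^2\sqrt{\log\frac{1}{\delta}/n}\big)$ to $\mathbb{E}[B]$, which is $o_{C_2}(1)\sigma^2$. Combined with the sandwich from the first step, this yields
\[
  \mathbb{E}\bigl[(x-\mu)^2\1_{|x-\mu|\le \beta T}\bigr] - o(1)\sigma^2 \;\le\; \mathbb{E}[B] \;\le\; \mathbb{E}\bigl[(x-\mu)^2\1_{|x-\mu|\le 4\beta T}\bigr] + o(1)\sigma^2.
\]
Combining with the Bernstein bound closes both directions by contrapositive: if $x$ is not $(\beta,L)$-inlier-light, then $\mathbb{E}[B]\ge (1-L-o(1))\sigma^2$, so $B\ge (1-\tfrac{5L}{4}-o(1))\sigma^2 > (1-2L)\sigma^2$ (for $C_2$ large), and the tester returns ``OUTLIER-LIGHT''; conversely, if $x$ is not $(4\beta,4L)$-outlier-light, then $\mathbb{E}[(x-\mu)^2\1_{|x-\mu|\le 4\beta T}]\le (1-4L)\sigma^2$, so $\mathbb{E}[B]\le (1-4L+o(1))\sigma^2$ and $B\le (1-\tfrac{15L}{4}+o(1))\sigma^2 < (1-2L)\sigma^2$, so the tester returns ``INLIER-LIGHT''.

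The main obstacle is getting Bernstein to be tight enough: the per-sample variance scales like $\beta^2 T^2\sigma^2 = \Theta(\beta^2\sigma^4 n/\log\frac{1}{\delta})$, so after averaging over $n$ samples the standard deviation of $B$ is $\Theta(\beta\sigma^2)$. Distinguishing $(1-L)\sigma^2$ from $(1-4L)\sigma^2$ at the threshold $(1-2L)\sigma^2$ with failure probability $\delta$ therefore requires exactly the quantitative relation $L\gtrsim \beta$, which the hypothesis $L>16\beta$ supplies. Everything else is slack that can be tuned by making $C_2$ large.
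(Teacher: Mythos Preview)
Your proof is correct and follows essentially the same approach as the paper: Bernstein's inequality for concentration, the indicator sandwich $\1_{|x-\mu|\le\beta T}\le\1_{|x-\mu_0|\le2\beta T}\le\1_{|x-\mu|\le4\beta T}$, and the Cauchy--Schwarz expansion to pass between $(x-\mu_0)^2$ and $(x-\mu)^2$. The only cosmetic difference is that you apply Bernstein once directly to $B$ and do the $\mu_0\to\mu$ comparison at the level of expectations, whereas the paper applies Bernstein twice to the $\mu$-centered empirical sums (with thresholds $\beta T$ and $4\beta T$) and does the comparison at the level of empirical sums; both orderings work and yield the same quantitative requirement $L\gtrsim\beta$.
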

\begin{proof}
    First, note that the variance of $(x_i - \mu)^2 \1_{|x_i - \mu| \le \beta T}$ is at most $(\beta T \sigma)^2$, and it is bounded by $(\beta T)^2$. Thus, by Bernstein's inequality, since $T = \sigma \sqrt{\frac{n}{2 \log \frac{2}{\delta}}}$ and $L > 8 \beta$, with probability $1 - \delta$,
    \begin{align*}
        \left|\frac{1}{n} \sum_{i=1}^n (x_i - \mu)^2 \1_{|x_i - \mu| \le \beta T} - \E\left[(x - \mu)^2 \1_{|x - \mu| \le \beta T} \right]\right| &\le \beta T \sigma \sqrt{\frac{2 \log \frac{2}{\delta}}{n}} + 2(\beta T)^2 \frac{\log \frac{2}{\delta}}{n} \\
        &= 2\beta \sigma^2 \le L\sigma^2/4
    \end{align*}
    and
    \begin{align*}
        \left|\frac{1}{n} \sum_{i=1}^n (x_i - \mu)^2 \1_{|x_i - \mu| \le 4\beta T} - \E\left[(x - \mu)^2 \1_{|x - \mu| \le 4\beta T} \right]\right| &\le 4\beta T \sigma \sqrt{\frac{2 \log \frac{2}{\delta}}{n}} + 8(\beta T)^2 \frac{\log \frac{2}{\delta}}{n} \\
        &= 8\beta \sigma^2 \le L\sigma^2
    \end{align*}
    We condition on the above. Now, since $|\mu_0 - \mu| \le C_1 \sigma \sqrt{\frac{\log \frac{1}{\delta}}{n}} \le \frac{\beta T}{2}$, we have
    \begin{align*}
        (x_i - \mu_0)^2 \1_{|x_i - \mu| \le \beta T} \le (x_i - \mu_0)^2 \1_{|x_i - \mu_0| \le 2 \beta T}
    \end{align*}
    So, since $|\mu - \mu_0| \le C_1 \sigma \sqrt{\frac{\log \frac{1}{\delta}}{n}}$, we have, by Cauchy-Schwarz,
    \begin{align*}
        &\frac{1}{n} \sum_{i=1}^n (x_i - \mu)^2 \1_{|x_i - \mu| \le \beta T}\\
        &\le \frac{1}{n}\sum_{i=1}^n (x_i - \mu_0)^2 \1_{|x_i - \mu_0| \le 2 \beta T} + C_1^2 \sigma^2 \frac{\log \frac{1}{\delta}}{n} + 2 C_1 \sigma \sqrt{\frac{\log \frac{1}{\delta}}{n}} \cdot \sqrt{\frac{1}{n}\sum_{i=1}^n (x_i - \mu_0)^2 \1_{|x_i - \mu_0| \le 2 \beta T } } \\
    \end{align*}
    So, if Algorithm~\hyperref[alg:1d_inlier_vs_outlier_light_tester]{\textsc{1DInlierOutlierLightTester}} returns ``INLIER-LIGHT'' so that $\frac{1}{n} \sum_{i=1}^n (x_i - \mu_0)^2 \1_{|x_i - \mu_0| \le 2 \beta T} \le \left(1 - 2 L \right) \sigma^2$, then,
    \begin{align*}
        \frac{1}{n} \sum_{i=1}^n (x_i - \mu)^2 \1_{|x_i - \mu| \le \beta T} &\le \left(1 - \frac{3 L}{2}\right) \sigma^2
    \end{align*}
    for $C_2$ large enough. So, in this case
    \begin{align*}
        \E\left[(x- \mu)^2 \1_{|x- \mu| \le \beta T} \right] \le \left(1 - L \right) \sigma^2
    \end{align*}
    so that $x_i$ is $(\beta, L)$-inlier-light, as claimed. For the other case, note that
    \begin{align*}
        (x_i - \mu_0)^2\1_{|x_i - \mu|\le 4 \beta T} \ge (x_i - \mu_0)^2 \1_{|x_i - \mu_0| \le 2 \beta T}
    \end{align*}
    Again, by Cauchy-Schwarz, since $|\mu - \mu_0| \le C_1 \sigma \sqrt{\frac{\log \frac{1}{\delta}}{n}}$,
    \begin{align*}
        &\frac{1}{n} \sum_{i=1}^n (x_i - \mu)^2 \1_{|x_i - \mu| \le 4 \beta T}\\
        &\ge \frac{1}{n} \sum_{i=1}^n (x_i - \mu_0)^2 \1_{|x_i - \mu_0| \le 2 \beta T} - C_1^2 \sigma^2 \frac{\log \frac{1}{\delta}}{n} - 2 C_1 \sigma \sqrt{\frac{\log \frac{1}{\delta}}{n}} \cdot \sqrt{\frac{1}{n} \sum_{i=1}^n (x_i - \mu_0)^2 \1_{|x_i - \mu_0| \le 2 \beta T}}
    \end{align*}
    So, when Algorithm~\hyperref[alg:1d_inlier_vs_outlier_light_tester]{\textsc{1DInlierOutlierLightTester}} returns ``OUTLIER-LIGHT'' so that $\frac{1}{n} \sum_{i=1}^n (x_i - \mu_0)^2 \1_{|x_i - \mu_0| \le 2 \beta T} > (1 - 2 L) \sigma^2$,
    \begin{align*}
        \frac{1}{n} \sum_{i=1}^n (x_i - \mu)^2 \1_{|x_i - \mu| \le 4 \beta T} > \left( 1 - 3 L\right)\sigma^2
    \end{align*}
    for $C_2$ large enough. So,
    \begin{align*}
        \E\left[(x - \mu)^2 \1_{|x - \mu| \le 4 \beta T} \right] > (1 - 4 L) \sigma^2
    \end{align*}
    So, since the variance is at most $\sigma^2$,
    \begin{align*}
        \E\left[(x-\mu)^2 \1_{|x - \mu| > 4 \beta T} \right] \le 4 L \sigma^2
    \end{align*}
    so that $x_i$ is $(4\beta, 4L)$-outlier-light, as claimed.
\end{proof}

\begin{lemma}
    \label{lem:1d_if_inlier_light_then_detects}
    For every constant $\beta < 1/16$, $L > 16 \beta$, and $C_1 > 1$, there exists constant $C_2 > 1$ such that the following holds. Suppose $n > C_2 \log \frac{1}{\delta}$, and we have that $|\mu_0 - \mu| \le C_1 \sigma \sqrt{\frac{\log \frac{1}{\delta}}{n}}$. We let $T = \sigma \sqrt{\frac{n}{2 \log \frac{2}{\delta}}}$.

    Given $n$ one-dimensional iid samples $x_1, \dots, x_n$ with mean $\mu$ and variance at most $\sigma^2$, with probability $1 - \delta$, we have that
    \begin{itemize}
        \item If $x_i$ is $(4\beta, 4L)$-inlier-light, then Algorithm~\hyperref[alg:1d_inlier_vs_outlier_light_tester]{\textsc{1DInlierOutlierLightTester}} returns ``INLIER-LIGHT''.
    \end{itemize}
\end{lemma}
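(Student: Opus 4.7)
The plan is to show that when the distribution is $(4\beta, 4L)$-inlier-light, the empirical quantity $B$ computed by the tester concentrates around $\E[(x-\mu)^2 \1_{|x-\mu|\le 4\beta T}]$ (up to a harmless change of truncation window and centering), which is strictly below $(1-2L)\sigma^2$, so the algorithm outputs ``INLIER-LIGHT''. This is essentially the contrapositive of the ``OUTLIER-LIGHT'' direction of Lemma~\ref{lem:inlier_outlier_1d_testing}, and the proof will mirror its structure.

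First I would replace the truncation at $2\beta T$ around $\mu_0$ by a truncation around $\mu$. Because $|\mu_0 - \mu| \le C_1 \sigma \sqrt{\log(1/\delta)/n} \le \beta T/2$ (which holds for $C_2$ large enough, since $T = \sigma\sqrt{n/(2\log(2/\delta))}$), the event $\{|x_i - \mu_0| \le 2\beta T\}$ is contained in $\{|x_i - \mu| \le 4\beta T\}$. Therefore
\[
(x_i - \mu_0)^2 \1_{|x_i - \mu_0| \le 2\beta T} \le (x_i - \mu_0)^2 \1_{|x_i - \mu| \le 4\beta T}.
\]
Expanding $(x_i - \mu_0)^2 = (x_i - \mu)^2 + 2(x_i - \mu)(\mu - \mu_0) + (\mu - \mu_0)^2$ and applying Cauchy--Schwarz to the cross term yields
\[
\tfrac{1}{n}\sum_i (x_i - \mu_0)^2 \1_{|x_i - \mu_0|\le 2\beta T} \le \tfrac{1}{n}\sum_i (x_i - \mu)^2 \1_{|x_i - \mu|\le 4\beta T} + C_1^2 \sigma^2 \tfrac{\log(1/\delta)}{n} + 2 C_1 \sigma \sqrt{\tfrac{\log(1/\delta)}{n}} \sqrt{\tfrac{1}{n}\sum_i (x_i-\mu)^2 \1_{|x_i - \mu|\le 4\beta T}}.
\]

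Next I would apply Bernstein's inequality to the random variable $(x_i - \mu)^2 \1_{|x_i - \mu|\le 4\beta T}$, which is bounded by $(4\beta T)^2$ and has variance at most $(4\beta T)^2 \sigma^2$. Exactly as in the proof of Lemma~\ref{lem:inlier_outlier_1d_testing}, this gives with probability $1-\delta$ a deviation of at most $O(\beta)\sigma^2 \le L\sigma^2/4$ from its expectation. Combined with the $(4\beta, 4L)$-inlier-light hypothesis $\E[(x-\mu)^2 \1_{|x-\mu|\le 4\beta T}] < (1-4L)\sigma^2$, the empirical truncated second moment around $\mu$ is at most $(1 - 4L + L/4)\sigma^2 \le (1 - 3L)\sigma^2$. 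Plugging this into the displayed inequality above and choosing $C_2$ large enough so the lower-order terms (of order $\sigma^2\log(1/\delta)/n$ and $\sigma^2\sqrt{\log(1/\delta)/n}$) are at most $L\sigma^2$, we conclude $B \le (1 - 2L)\sigma^2$, so the tester outputs ``INLIER-LIGHT'' as required.

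There is no real obstacle here; the only subtlety is bookkeeping the constants so that $C_2$ can be chosen (depending on $C_1, \beta, L$) to absorb both the Bernstein slack and the Cauchy--Schwarz slack into the gap between $4L$ (the hypothesis) and $2L$ (the test threshold). The hypothesis $L > 16\beta$ is precisely what makes the Bernstein error $O(\beta)\sigma^2$ small relative to $L\sigma^2$, mirroring the other direction of Lemma~\ref{lem:inlier_outlier_1d_testing}.
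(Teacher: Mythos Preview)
Your proposal is correct and follows essentially the same approach as the paper's proof: Bernstein's inequality on $(x_i-\mu)^2\1_{|x_i-\mu|\le 4\beta T}$ to control the empirical truncated second moment, the inclusion $\{|x_i-\mu_0|\le 2\beta T\}\subset\{|x_i-\mu|\le 4\beta T\}$ via $|\mu_0-\mu|\le\beta T/2$, and the same Cauchy--Schwarz bookkeeping to pass from centering at $\mu$ to centering at $\mu_0$. The only difference is the order in which you present the steps; the content and constants match.
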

\begin{proof}
    The proof is similar to the proof of Lemma~\ref{lem:inlier_outlier_1d_testing}.
    First, note that the variance of $(x_i - \mu)^2 \1_{|x_i - \mu| \le 4\beta T}$ is at most $(4\beta T \sigma)^2$, and it is bounded by $(4\beta T)^2$. Thus, by Bernstein's inequality, since $T = \sigma \sqrt{\frac{n}{2 \log \frac{2}{\delta}}}$ and $L > 16 \beta$, with probability $1 - \delta$,
    \begin{align*}
        \left|\frac{1}{n} \sum_{i=1}^n (x_i - \mu)^2 \1_{|x_i - \mu| \le 4\beta T} - \E\left[(x - \mu)^2 \1_{|x - \mu| \le 4\beta T} \right]\right| &\le 4\beta T \sigma \sqrt{\frac{2 \log \frac{2}{\delta}}{n}} + 2(4\beta T)^2 \frac{\log \frac{2}{\delta}}{n} \\
        &\le 4\beta \sigma^2 \le L\sigma^2/4
    \end{align*}
    We condition on the above. Now, since $|\mu_0 - \mu| \le C_1 \sigma \sqrt{\frac{\log \frac{1}{\delta}}{n}} \le \frac{\beta T}{2}$, we have
    \begin{align*}
        (x_i - \mu_0)^2 \1_{|x_i - \mu_0| \le 2 \beta T} \le (x_i - \mu_0)^2 \1_{|x_i - \mu| \le 4 \beta T}
    \end{align*}
    So, since $|\mu - \mu_0| \le C_1 \sigma \sqrt{\frac{\log \frac{1}{\delta}}{n}}$, we have, by Cauchy-Schwarz,
    \begin{align*}
        &\frac{1}{n} \sum_{i=1}^n (x_i - \mu_0)^2 \1_{|x_i - \mu| \le 4\beta T}\\
        &\le \frac{1}{n}\sum_{i=1}^n (x_i - \mu)^2 \1_{|x_i - \mu| \le 4 \beta T} + C_1^2 \sigma^2 \frac{\log \frac{1}{\delta}}{n} + 2 C_1 \sigma \sqrt{\frac{\log \frac{1}{\delta}}{n}} \cdot \sqrt{\frac{1}{n}\sum_{i=1}^n (x_i - \mu)^2 \1_{|x_i - \mu| \le 4 \beta T } } \\
    \end{align*}
    So, if $x_i$ is $(4 \beta, 4L)$-inlier-light so that by the above
    \[
        \frac{1}{n} \sum_{i=1}^n (x_i - \mu)^2 \1_{|x_i - \mu| \le 4 \beta T} \le \left( 1 - 3 L\right) \sigma^2
    \]
    we have
    \[
        \frac{1}{n} \sum_{i=1}^n (x_i - \mu_0)^2 \1_{|x_i - \mu_0| \le 2 \beta T} \le (1 - 2 L) \sigma^2
    \]
    so that Algorithm~\hyperref[alg:1d_inlier_vs_outlier_light_tester]{\textsc{1DInlierOutlierLightTester}} returns ``INLIER-LIGHT'' as claimed.
\end{proof}
\begin{algorithm}[H]\caption{\textsc{2DInlierOutlierLightTester}}
    \label{alg:2d_inlier_vs_outlier_light-tester} 
    \paragraph{Input parameters:}
    \begin{itemize}
        \item Two-dimensional iid samples $x_1, \dots, x_n$, Scaling parameter $T$, Inlier-Outlier-Lightness parameters $\beta, L$, Initial estimates $\mu_0^{e_1}, \mu_0^{e_2}$.
    \end{itemize}
    \begin{enumerate}
        \item For each $j \in \{1, 2\}$, run Algorithm~\hyperref[alg:1d_inlier_vs_outlier_light_tester]{\textsc{1DInlierOutlierLightTester}} using samples $\inner{e_j, x_1}, \dots, \inner{e_j, x_n}$ and initial estimate $\mu_0^{e_j}$.
        \item If for both $j$ the output is ``OUTLER-LIGHT'', return $\perp$. Otherwise, return $e_j$ such that the output for run $j$ was ``INLIER-LIGHT''.
    \end{enumerate}
\end{algorithm}

\begin{lemma}
    \label{lem:1_direction_to_all_outlier_lightness}
    Suppose $x$ is a distribution over $\R^2$ with mean $\mu$ and covariance $\Sigma \preccurlyeq \sigma^2 I_d$ such that $\inner{v_j, x}$ is $(\beta, L)$-outlier-light for each $j \in [2]$. Suppose also that $|\inner{v_1, v_2}| \le 3/4$.  Then $x$ is $(4 \beta, 4 L)$-outlier-light.
\end{lemma}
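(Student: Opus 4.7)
I will express an arbitrary unit direction $w \in \R^2$ in the (non-orthogonal) basis $(v_1, v_2)$ and translate the outlier-light event in direction $w$ back to outlier-light events in directions $v_1$ and $v_2$.

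First, since $c := \inner{v_1, v_2}$ satisfies $|c| \le 3/4 < 1$, the vectors $v_1, v_2$ form a basis of $\R^2$, so I can write $w = a_1 v_1 + a_2 v_2$. From $1 = \|w\|^2 = a_1^2 + 2 c a_1 a_2 + a_2^2$ and $|2 c a_1 a_2| \le |c|(a_1^2 + a_2^2)$, I get $a_1^2 + a_2^2 \le (1-|c|)^{-1} \le 4$, and in particular $|a_1|, |a_2| \le 2$.

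Setting $X_j := \inner{v_j, x - \mu}$ and $E := \{|\inner{w, x - \mu}| \ge 4\beta T\}$, the triangle inequality applied to $\inner{w, x-\mu} = a_1 X_1 + a_2 X_2$ forces $|a_j X_j| \ge 2\beta T$ for some $j$ on $E$; combined with $|a_j| \le 2$ this gives $|X_j| \ge \beta T$. Hence $E \subseteq E_1 \cup E_2$, where $E_j := \{|X_j| \ge \beta T\}$ is the 1d outlier event for direction $v_j$. By hypothesis $\E[X_j^2 \1_{E_j}] < L \sigma^2$, which also yields the Chebyshev-style tail bound $\Pr[E_j] \le L\sigma^2/(\beta T)^2$.

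Next I would bound $\E[X_j^2 \1_E]$ by splitting $E = (E \cap E_j) \sqcup (E \cap E_j^c)$: the first piece is at most $\E[X_j^2 \1_{E_j}] \le L\sigma^2$, and on the second $|X_j| < \beta T$ while $E \cap E_j^c \subseteq E_{3-j}$, giving a contribution at most $\beta^2 T^2 \cdot \Pr[E_{3-j}] \le L\sigma^2$. Combining these via Cauchy--Schwarz, $(a_1 X_1 + a_2 X_2)^2 \le (a_1^2 + a_2^2)(X_1^2 + X_2^2) \le 4(X_1^2 + X_2^2)$, then yields a bound on $\E[\inner{w, x-\mu}^2 \1_E]$ of the form $O(L)\sigma^2$.

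The main obstacle is squeezing the leading constant down to the claimed $4L$: a naive combination of Cauchy--Schwarz with the two-term split above produces a substantially larger constant. To tighten this I would partition $E$ into the three disjoint subregions $E \cap E_1 \cap E_2^c$, $E \cap E_1^c \cap E_2$, and $E \cap E_1 \cap E_2$. On each asymmetric region, the sharper pointwise estimate $|\inner{w, x-\mu}| \le 2|X_j| + 2\beta T \le 4|X_j|$ (valid because $|X_j| \ge \beta T$ there) absorbs the inlier coordinate into the dominant one without an additional Cauchy--Schwarz factor, and on the doubly-outlier region the two 1d outlier-light bounds apply simultaneously. Summing these contributions carefully should give $\E[\inner{w, x-\mu}^2 \1_E] \le 4L\sigma^2$ for every unit $w$, which is exactly the statement that $x$ is $(4\beta, 4L)$-outlier-light.
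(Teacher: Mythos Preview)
Your route—expressing $w=a_1v_1+a_2v_2$, bounding $|a_j|\le 2$, and showing $E\subseteq E_1\cup E_2$—is correct and genuinely different from the paper's. The paper instead picks a \emph{single} index $j$ with $|\langle w,v_j\rangle|\ge\tfrac12$ and asserts in one line that
\[
\E\!\left[\langle w,x-\mu\rangle^2\,\1_{|\langle w,x-\mu\rangle|>4\beta T}\right]\le 4\,\E\!\left[\langle v_j,x-\mu\rangle^2\,\1_{|\langle v_j,x-\mu\rangle|>\beta T}\right],
\]
whereas you work with both $v_1,v_2$ simultaneously via the basis decomposition and the Chebyshev-type bound $\Pr[E_j]\le L\sigma^2/(\beta T)^2$. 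Your argument is more explicit and self-contained; the paper's is terse.

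There is, however, a gap in your final ``tightening to $4L$'' step. On the asymmetric region $E\cap E_1\cap E_2^c$ your pointwise bound $|\langle w,x-\mu\rangle|\le 4|X_1|$ gives, after squaring, $\langle w,x-\mu\rangle^2\le 16X_1^2$, so that piece contributes up to $16\,\E[X_1^2\1_{E_1}]\le 16L\sigma^2$, not something summing to $4L\sigma^2$. Adding the three disjoint regions you get a constant well above $4$. In fact your \emph{first}, simpler argument already gives $\E[\langle w,x-\mu\rangle^2\1_E]\le (a_1^2+a_2^2)\cdot\bigl(\E[X_1^2\1_E]+\E[X_2^2\1_E]\bigr)\le 4\cdot 4L\sigma^2=16L\sigma^2$, and the coefficient bound $a_1^2+a_2^2\le 4$ (tight when $|\langle v_1,v_2\rangle|=3/4$) prevents you from doing better by this method; only in the orthonormal case $a_1^2+a_2^2=1$ does the computation close at exactly $4L$. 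Since everywhere this lemma is invoked the constants are immediately absorbed into larger unspecified constants, the distinction between $4L$ and $16L$ is immaterial to the paper—but your write-up should state the constant you actually prove rather than claim the sharp~$4$.
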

\begin{proof}  
    For any $w \in \S^1$, there is some $j \in \{1, 2\}$ with $\inner{w, e_j} \ge \frac{1}{2}$.
    So,
    \begin{align*}
        \E\left[\inner{w, x - \mu}^2 \1_{|\inner{w, x - \mu}| > 4 \beta T} \right] \le 4 \E\left[\inner{e_j, x - \mu}^2 \1_{|\inner{e_j, x -\mu}| > \beta T} \right] \le 4 L\sigma^2
    \end{align*}
    as required.
\end{proof}

\lemtwodinlieroutliertesting*
%
\begin{proof}
    By Lemma~\ref{lem:inlier_outlier_1d_testing}, with probability $1 - 2\delta$,
    \begin{itemize}
        \item If the output is $e_j$, then $\inner{e_j, x_i}$ is $(\beta, L)$-inlier-light as claimed.
        \item If the output is $\perp$, then both $\inner{e_1, x_i}$ and $\inner{e_2, x_i}$ are $(4 \beta, 4L)$-outlier-light. Then, by Lemma~\ref{lem:1_direction_to_all_outlier_lightness}, $x_i$ is $(16 \beta, 16 L)$-outlier-light.
    \end{itemize}
    Reparameterizing $\delta$ gives the claim.
\end{proof}

\subsection{Properties of Inlier-Lightness}
\begin{lemma}
    \label{lem:one_direction_inlier_light_implies_two_triangle_directions_inlier_light}
    Suppose $x$ is a two-dimensional distribution with mean $\mu$ and covariance $\Sigma \preccurlyeq \sigma^2 I_d$ such that $\inner{e_i, x}$ is $(\beta, L)$-inlier-light. Consider vectors $v_1, v_2, v_3$ such that for each $j \neq k$, $|\inner{v_j, v_k}| \le \frac{3}{4}$. Then, for some $j$, $\inner{v_j, x}$ is $(\beta/8, L/8)$-inlier-light.
\end{lemma}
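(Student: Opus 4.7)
The plan is to prove the contrapositive: assume for every $j \in \{1,2,3\}$ that $\inner{v_j, x}$ is \emph{not} $(\beta/8, L/8)$-inlier-light, and derive that $\inner{e_i, x}$ cannot be $(\beta, L)$-inlier-light. Writing $Y := x - \mu$ and $A_j := \{|\inner{v_j, Y}| \le \beta T/8\}$, the assumption gives $\E[\inner{v_j, Y}^2 \1_{A_j}] \ge (1-L/8)\sigma^2$, hence $v_j^T \Sigma v_j \ge (1-L/8)\sigma^2$; combined with $v_j^T \Sigma v_j \le \sigma^2$ (from $\Sigma \preccurlyeq \sigma^2 I$) this yields the aggregate outlier-variance bound $\sum_j \E[\inner{v_j, Y}^2 \1_{A_j^c}] \le (3L/8)\sigma^2$. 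The proof is completed by establishing the reverse inequality using ``inlier-lightness of $\inner{e_i, x}$'' together with two sharp two-dimensional geometric facts.

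The first geometric fact lower-bounds $\sigma_{\min}(\Sigma)$. In the spectral decomposition $\Sigma = \sigma_1^2 u_1 u_1^T + \sigma_2^2 u_2 u_2^T$ with $\sigma_1^2 \ge \sigma_2^2$, the bound $v_j^T \Sigma v_j \ge (1 - L/8)\sigma^2$ forces $\inner{v_j, u_1}^2 \ge \gamma := ((1-L/8)\sigma^2 - \sigma_2^2)/(\sigma_1^2 - \sigma_2^2)$. But the set $\{w \in \S^1 : \inner{w, u_1}^2 \ge \gamma\}$ is, modulo $\pi$, an arc of length $2\arccos\sqrt{\gamma}$, while $|\inner{v_j, v_k}| \le 3/4$ forces pairwise angular separation at least $\arccos(3/4)$ modulo $\pi$; fitting three $v_j$'s into the arc thus forces $2\arccos\sqrt\gamma \ge 2\arccos(3/4)$, i.e.\ $\gamma \le 9/16$. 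Substituting $\sigma_1^2 \le \sigma^2$ into $\gamma \le 9/16$ and rearranging yields $\sigma_2^2 \ge (1 - 2L/7)\sigma^2$, hence $\sigma_{e_i}^2 \ge (1 - 2L/7)\sigma^2$; combined with inlier-lightness of $\inner{e_i, x}$ this gives $\E[\inner{e_i, Y}^2 \1_{|\inner{e_i, Y}| > \beta T}] > \sigma_{e_i}^2 - (1-L)\sigma^2 \ge (5L/7)\sigma^2$.

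The second geometric fact is $\lambda_{\min}(M) \ge 7/8$ for $M := \sum_j v_j v_j^T$. Since $\tr(M) = 3$ and $\tr(M^2) = 3 + 2\sum_{j<k}\inner{v_j, v_k}^2$, one parametrizes $v_j = (\cos\phi_j, \sin\phi_j)$ and shows, using $\sum_{j<k}\cos^2(\phi_j - \phi_k) = \tfrac{3}{2} + \tfrac{1}{2}\sum_{j<k}\cos(2\phi_j - 2\phi_k)$ optimized under $\cos^2(\phi_j - \phi_k) \le 9/16$, that $\sum_{j<k}\inner{v_j, v_k}^2 \le 73/64$; solving the resulting quadratic gives $\lambda_{\min}(M) \ge 3/2 - 5/8 = 7/8$. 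Consequently, pointwise, $\sum_j \inner{v_j, Y}^2 \ge (7/8) \inner{e_i, Y}^2$, and since $\inner{v_j, Y}^2 \1_{A_j} \le (\beta T/8)^2$, one obtains
\[
    \sum_j \inner{v_j, Y}^2 \1_{A_j^c} \;\ge\; (7/8) \inner{e_i, Y}^2 - 3(\beta T/8)^2.
\]

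Taking expectation restricted to $E := \{|\inner{e_i, Y}| > \beta T\}$ (where the right-hand side is positive, since $7/8 > 3/64$) and absorbing the constant term via the Chebyshev-style bound $(\beta T)^2 \Pr[E] \le \E[\inner{e_i, Y}^2 \1_E]$ yields
\[
    \sum_j \E[\inner{v_j, Y}^2 \1_{A_j^c}] \;\ge\; \bigl(\tfrac{7}{8} - \tfrac{3}{64}\bigr)\E[\inner{e_i, Y}^2 \1_E] \;>\; \tfrac{53}{64}\cdot\tfrac{5L}{7}\sigma^2 \;=\; \tfrac{265}{448}L\sigma^2 \;>\; (3L/8)\sigma^2,
\]
contradicting the upper bound from the first paragraph. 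The main obstacle is establishing the two sharp two-dimensional geometric bounds (arc-packing for $\gamma$, Frobenius for $\lambda_{\min}(M)$) and verifying that the numerical inequality $\tfrac{265}{448} > \tfrac{168}{448} = 3/8$ closes; the factor $1/8$ appearing in the statement is tuned precisely so that the chain $\sigma_2^2 \ge (1 - 2L/7)\sigma^2 \Rightarrow \E[\inner{e_i, Y}^2 \1_{|\inner{e_i, Y}| > \beta T}] > (5L/7)\sigma^2$ combines with $\lambda_{\min}(M) \ge 7/8$ and the Chebyshev slack $(3/64)$ to strictly beat $3L/8$.
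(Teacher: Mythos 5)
Your proposal is correct, but it takes a genuinely different route from the paper. The paper's proof is a short, modular case analysis: since the $v_j$'s are pairwise separated, two of them satisfy $|\inner{v_j, e_i}| \ge \tfrac12$; if $\Var(\inner{e_i,x})$ is noticeably below $\sigma^2$ then such a direction has variance bounded away from $\sigma^2$ and is therefore trivially inlier-light, and otherwise $\inner{e_i,x}$ must be outlier-heavy, and the contrapositive of Lemma~\ref{lem:1_direction_to_all_outlier_lightness} transfers outlier-heaviness (hence inlier-lightness) to one of the two $v_j$'s. You instead prove the contrapositive quantitatively: the aggregate outlier-variance bound $\sum_j \E[\inner{v_j,Y}^2\1_{A_j^c}] \le \tfrac{3L}{8}\sigma^2$, the arc-packing bound forcing $\gamma \le 9/16$ and hence $\sigma_{\min}(\Sigma) \ge (1-\tfrac{2L}{7})\sigma^2$ (so the $e_i$-outlier mass exceeds $\tfrac{5L}{7}\sigma^2$), and the Gram-matrix bound $\lambda_{\min}(\sum_j v_jv_j^T) \ge 7/8$, combined via the Chebyshev absorption; all of these steps check out, including the final comparison $\tfrac{53}{64}\cdot\tfrac{5}{7} > \tfrac{3}{8}$. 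What the paper's route buys is brevity and reuse of an existing lemma with no delicate numerics; what yours buys is a self-contained, sharper spectral argument---but it is numerically tight, and the crux is the extremal bound $\sum_{j<k}\inner{v_j,v_k}^2 \le 73/64$. Be aware that the per-pair bound $\cos(2\phi_j-2\phi_k) \le 1/8$ applied termwise only gives $27/16$, hence $\lambda_{\min} \approx 0.53$, which is \emph{not} enough to beat $3L/8$ at the end; you genuinely need the constrained optimization over the three angles (the maximum $73/64$ is attained at line angles $0$, $\arccos(3/4)$, $2\arccos(3/4)$, verified by a boundary analysis), so this step deserves a full proof rather than the one-line identity you cite. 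You should also note, for completeness, the degenerate cases $\sigma_1^2=\sigma_2^2$ or $\gamma \le 0$, where $\sigma_{\min}(\Sigma) \ge (1-\tfrac{L}{8})\sigma^2$ holds directly and the argument only gets easier.
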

\begin{proof}
    Under the constraints provided, there exists two $j \in [3]$ with $|\inner{v_j, e_i}| \ge \frac{1}{2}$. Then, there are two cases
    \begin{itemize}
        \item \textbf{$\Var(\inner{e_i, x}) \le \left(1 - \frac{L}{2} \right)\sigma^2$.} In this case, 
        \begin{align*}
            \Var(\inner{v_j, x}) &\le \frac{3}{4} \sigma^2 + \frac{1}{2} \left(1 - \frac{L}{2} \right) \sigma^2\\
            &\le \left(1 - \frac{L}{4} \right)\sigma^2
        \end{align*}
        so that $\inner{v_j, x}$ is $(\beta/4, L/4)$-inlier-light.
        \item \textbf{$\Var(\inner{e_i, x}) > \left(1 - \frac{L}{2} \right)\sigma^2$.}  In this case, since $\inner{e_i, x}$ is $(\beta, L)$-inlier-light,
        \begin{align*}
            \E\left[(\inner{e_i, x - \mu})^2 \1_{|\inner{e_i, x-\mu} > \beta T|} \right] &\ge \left(1 - \frac{L}{2}  \right)\sigma^2 - \E\left[(\inner{e_i, x - \mu})^2 \1_{|\inner{e_i, x-\mu} \le \beta T|} \right]\\
            &\ge \frac{L}{2} \sigma^2
        \end{align*}
        so that $\inner{e_i, x}$ is $(\beta/2, L/2)$-outlier-heavy. Then, by (the contrapositive of) Lemma~\ref{lem:1_direction_to_all_outlier_lightness}, one of the two $\inner{v_j, x}$ is $(\beta/8, L/8)$ outlier-heavy, and hence $(\beta/8, L/8)$-inlier-light.
    \end{itemize}
\end{proof}
%
\setcounter{algorithm}{1}
\subsection{Two-Dimensional Catoni-Based Estimator when Inlier-Light}
\begin{algorithm}[H]\caption{\textsc{2DInlierLightEstimator}}
    \vspace{0.2cm}
    \paragraph{Input parameters:}
    \begin{itemize}
        \item Failure probability $\delta$, Two-dimensional iid samples $x_1, \dots, x_n$, $\psi$ function, Scaling parameter $T$, Inlier-Outlier-Ligtness parameters $\beta, L$, Approximation parameters $0 < \xi, \tau < 1$, Set of unit vectors $U$, Initial estimates $\mu_0^u$ for $u \in U$. 
    \end{itemize}
    \begin{enumerate}
        \item For every $u \in U$, run Algorithm~\hyperref[alg:1d_inlier_vs_outlier_light_tester]{\textsc{1DInlierOutlierLightTester}} with samples $\inner{u, x_1}, \dots, \inner{u, x_n}$, Failure probability $\frac{\delta}{4|U|}$, initial estimate $\mu_0^u$, and Lightness parameters $\beta/32, L/32$. If the output is ``INLIER-LIGHT'', let $\alpha_u = 1 - \Theta(\tau)$. Otherwise, let $\alpha_u = 1 + \xi$.
        \item For every $u \in U$, run Algorithm~\hyperref[alg:catoni_local]{\textsc{CatoniEstimatorLocal}} with samples $\inner{u, x_1}, \dots, \inner{u, x_n}$, initial estimate $\mu_0^u$, and failure probability $\frac{\delta}{4|U|}$ and let the mean estimate obtained be $\wh \mu_u$. 
        \item For each $u \in U$, define set $S_u = \left\{w : |\inner{u, w} - \wh \mu_u| \le \alpha_u \cdot \sigma \sqrt{\frac{2 \log \frac{2}{\delta}}{n}}\right\}$. Let $S$ be the convex set given by $S := \cap_{u \in U} S_u$.
        \item Consider the minimum enclosing ball of set $S$ and return its center as the mean estimate $\wh \mu$.
    \end{enumerate}
\end{algorithm}

\assumptionrhonetu*

This assumption is satisfied by a standard $\rho$-net in two-dimensions. Then, we have the main result of this section - that if our distribution is inlier-light in some direction $e_j$, then, Algorithm~\ref{alg:two-d-catoni} outputs an estimate that has error smaller than $JUNG_2 \cdot \sigma \sqrt{\frac{2 \log \frac{2}{\delta}}{n}}$ by a constant factor.

%

%
\lemtwodinlierestimator*
%
\begin{proof}
    First, by Lemma~\ref{lem:1d_if_inlier_light_then_detects} and a union bound, with probability $1 - \frac{\delta}{4}$, for any $u \in U$ such that $\inner{u, x_i}$ is $(\beta/8, L/8)$-inlier-light, Algorithm~\hyperref[alg:1d_inlier_vs_outlier_light_tester]{\textsc{1DInlierOutlierLightTester}} returns ``INLIER-LIGHT'', so that $\alpha_u = 1 - \Theta(\tau)$ for all such $u$.
    
    By Theorem~\ref{alg:catoni_local} and the union bound, with probability $1 - \frac{\delta}{4}$, since $|U| = O(1/\rho) = \delta^{-\Theta(\xi)}$, we have that for every $u \in U$,
    \begin{align*}
        \left|\wh \mu_u - \inner{u, \mu} \right| &\le \left(1 + O(\xi) \right) \cdot \sigma \cdot \sqrt{\frac{2 \log \frac{1}{\rho} + 2 \log \frac{2}{\delta}}{n}}\\
        &\le (1 + \xi) \cdot \sigma \cdot \sqrt{\frac{2 \log \frac{2}{\delta}}{n}}
    \end{align*}
    Similarly, for every $u$ such that $\inner{u, x_i}$ is $(\beta/8, L/8)$-inner-light, by Theorem~\ref{thm:1d_improved_catoni} and a union bound, with probability $1 - \frac{\delta}{4}$,
    \begin{align*}
        \left|\wh \mu_{u} - \inner{u, \mu} \right| \le \left(1 - \frac{\eta L}{4} + \xi \right) \cdot \sigma \sqrt{\frac{2 \log \frac{2}{\delta}}{n}}
    \end{align*}
    So, for constant $\xi$ sufficiently small, there is a constant $\tau > 0$ with 
    \begin{align*}
        \left|\wh \mu_{u} - \inner{u, \mu} \right| \le \left(1 - \Theta(\tau)\right) \cdot \sigma \sqrt{\frac{2 \log \frac{2}{\delta}}{n}}
    \end{align*}
    With probability $1 - \delta$, all the above conditions hold, so that for any $u \in U$ that has $\inner{u, x_i}$ that is $(\beta/8, L/8)$-inner-light, we have that $\wh \mu_u$ has smaller error than in the general case, and $\alpha_u$ captures this error. We condition on this event. 

    Then, if $R$ is the circumradius of the set $S$ in Algorithm~\hyperref[alg:two-d-catoni]{\textsc{2DInlierLightEstimator}}, its center $\wh \mu$ satisfies for every $u \in U$,
    \begin{align*}
        |\inner{u, \wh \mu - \mu}| \le R
    \end{align*}
    since by definition, the true mean $\mu$ lies in $S$.
    So,
    \begin{align*}
        \|\wh \mu - \mu\| = \sup_{w : \|w\| = 1} \inner{w, \wh \mu - \mu} \le \sup_{v \in U \cup \{e_j\}} \inner{v, \wh \mu - \mu} + \rho \|\wh \mu - \mu\| \le R + \rho\| \wh \mu - \mu\|
    \end{align*}
    so that
    \begin{align*}
        \|\wh \mu - \mu\| \le (1 + O(\rho)) R = (1 + \xi) R
    \end{align*}
    since $\rho = \delta^{\Theta(\xi)} \le \xi$.
    So, it suffices to bound $R$ by $(1 - \Theta(\tau)) \cdot JUNG_2 \cdot \sigma \sqrt{\frac{2 \log \frac{2}{\delta}}{n}}$, since for $\tau$ a small enough constant, this would imply
    \begin{align*}
        \|\wh \mu - \mu\| \le (1 - \tau) \cdot JUNG_2 \cdot \sigma \sqrt{\frac{2 \log \frac{2}{\delta}}{n}} 
    \end{align*}
    as required.
    
    To do this, note that if we consider the set $S$ along with its circumcircle, since $S$ is convex, there must be a triangle contained in $S$ whose vertices touch the circumcircle. Let $v_1, v_2, v_3$ be the unit vectors aligned with the sides of this triangle. There are two cases:
    \begin{itemize}
        \item \textbf{There exists a pair $i \neq j$ such that $|\inner{v_i, v_j}| > \frac{3}{4}$.} In this case, $R$ must be small. In particular, since the diameter of $S$ is at most $2 \cdot (1 + \xi) \cdot \sigma \sqrt{\frac{2 \log \frac{2}{\delta}}{n}}$ each side length corresponding to $v_i, v_j$, say $a_i, a_j$ must be at most this quantity. But by law of cosines, the other side must have length at most $\frac{2}{\sqrt{2}} (1 + \xi) \cdot \sigma \sqrt{\frac{2 \log \frac{2}{\delta}}{n}}$. But for a triangle with sides $a, b, c$, the circumradius is equal to $\frac{a b c}{\sqrt{(a+b +c)(b+c-a)(c+a-b)(a+b-c)}}$, which is monotonic in $a, b, c$. So, we have
        \begin{align*}
            R \le \frac{2 \sqrt{2}}{\sqrt{7}} \cdot (1 + \xi) \cdot \sigma \sqrt{\frac{2 \log \frac{2}{\delta}}{n}} \le (1 - \Theta(\tau)) \cdot \sqrt{\frac{4}{3}} \cdot \sigma \sqrt{\frac{2 \log \frac{2}{\delta}}{n}} = (1 - \Theta(\tau)) \cdot JUNG_2 \cdot \sigma \sqrt{\frac{2 \log \frac{2}{\delta}}{n}}
        \end{align*}
        as required.
        \item \textbf{For every pair $i \neq j$, $|\inner{v_i, v_j}| \le \frac{3}{4}$.} Then, since $\inner{e_k, x_i}$ is $(\beta, L)$-inlier-light, by Lemma~\ref{lem:one_direction_inlier_light_implies_two_triangle_directions_inlier_light}, there exists an $l \in [3]$ such that $v_l$ is $(\beta/8, L/8)$-inlier-light. Then, by the above, we have that $\alpha_{v_l} = 1 - \Theta(\tau)$ so that the side of the triangle corresponding to $v_l$ has length at most $(1 - \Theta(\tau)) \cdot \sigma \sqrt{\frac{2 \log \frac{2}{\delta}}{n}}$. But this means that $R$, the circumradius of a triangle with all two side lengths bounded by $(1 + \xi) \cdot \sigma \sqrt{\frac{2 \log \frac{2}{\delta}}{n}}$, and the third bounded by $(1 - \Theta(\tau)) \cdot \sigma \sqrt{\frac{2 \log \frac{2}{\delta}}{n}}$ has
        \begin{align*}
            R \le (1 - \Theta(\tau)) \cdot \sigma \sqrt{\frac{2 \log \frac{2}{\delta}}{n}}
        \end{align*}
        as required.
    \end{itemize}
    
\end{proof}

\subsection{Two-Dimensional Trimmed Mean Estimator when Outlier-Light}
\begin{algorithm}[H]\caption{\textsc{2DOutlierLightEstimator}}
    \paragraph{Input parameters:}
    \begin{itemize}
        \item Failure probability $\delta$, Two-dimensional samples $x_1, \dots, x_n$, Initial estimates $\mu_0^{e_1}, \mu_0^{e_2}$, Scaling parameter $T$, Approximation parameters $0<\beta, \xi < 1$.
    \end{itemize}
    \begin{enumerate}
        \item Consider the subset of samples $X'$ obtained by throwing out any sample $x_i$ with $|\inner{e_j, x_i} - \mu_0^{e_j}| > \sqrt{\beta} T$ for either $e_1$ or $e_2$. Return estimate $\wh \mu = \frac{1}{n} \sum_{i \in X'} x_i$.
    \end{enumerate}
\end{algorithm}
\begin{lemma}
\label{lem:trimming_doesnot_change_mean_much}
For any constants $\beta, L < 1$, suppose $x$ is a $(\beta, L)$-outlier-light distribution with mean $\mu$ and variance at most $\sigma^2$. Then, for $T = \sigma \sqrt{\frac{n}{2 \log \frac{2}{\delta}}}$, we have the following.
    \begin{align*}
        \left|\E\left[x \1_{|x - \mu| \le 2 \sqrt{\beta} T} \right] - \mu \right| \lesssim \frac{L}{\sqrt{\beta}} \cdot OPT_1
    \end{align*}
\end{lemma}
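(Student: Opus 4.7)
The plan is to reduce to the centered case and then bound the tail contribution using a Markov-style inequality combined with the outlier-lightness assumption. Since the quantity $\E[x \1_{|x - \mu| \le 2\sqrt{\beta} T}] - \mu$ is translation-invariant (shifting $x \mapsto x - \mu$ shifts both sides by $-\mu$), I may assume without loss of generality that $\mu = 0$. Then using $\E[x] = 0$, I rewrite
\begin{align*}
\E\left[x \1_{|x| \le 2\sqrt{\beta} T}\right] = -\E\left[x \1_{|x| > 2\sqrt{\beta} T}\right],
\end{align*}
so the problem becomes bounding the tail expectation $|\E[x \1_{|x| > 2\sqrt{\beta} T}]|$.

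Next I would use the elementary inequality $|x| \le x^2 / (2\sqrt{\beta} T)$ whenever $|x| > 2\sqrt{\beta} T$, which gives
\begin{align*}
\left|\E\left[x \1_{|x| > 2\sqrt{\beta} T}\right]\right| \le \E\left[|x| \1_{|x| > 2\sqrt{\beta} T}\right] \le \frac{1}{2\sqrt{\beta} T} \E\left[x^2 \1_{|x| > 2\sqrt{\beta} T}\right].
\end{align*}
Now I would invoke the $(\beta, L)$-outlier-light hypothesis. Since $\beta < 1$ we have $2\sqrt{\beta} \geq \beta$, so the event $\{|x| > 2\sqrt{\beta} T\}$ is contained in $\{|x| \ge \beta T\}$, giving $\E[x^2 \1_{|x| > 2\sqrt{\beta} T}] \le \E[x^2 \1_{|x| \ge \beta T}] < L \sigma^2$.

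Combining yields $|\E[x \1_{|x| \le 2\sqrt{\beta} T}]| \le \frac{L \sigma^2}{2 \sqrt{\beta} T}$. Substituting $T = \sigma \sqrt{n / (2 \log \frac{2}{\delta})}$ gives $\sigma^2/T = \sigma \sqrt{2 \log \frac{2}{\delta}/n} = \Theta(OPT_1)$, so the bound becomes $\frac{L}{\sqrt{\beta}} \cdot OPT_1$ up to constants, as required. There is no real obstacle here; the only point to be careful about is ensuring that the trimming threshold $2\sqrt{\beta} T$ is at least $\beta T$ so that outlier-lightness applies directly, which holds for $\beta < 1$ (indeed for any $\beta \le 4$).
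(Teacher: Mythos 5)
Your proof is correct and takes essentially the same route as the paper's: both rewrite the truncated-mean bias as the tail contribution $\E\left[x \1_{|x-\mu| > 2\sqrt{\beta}T}\right]$ and control it via the outlier-light bound $\E\left[(x-\mu)^2\1_{|x-\mu|\ge \beta T}\right] < L\sigma^2$; your pointwise inequality $|x| \le x^2/(2\sqrt{\beta}T)$ on the tail event is a slightly cleaner substitute for the paper's integration of $\Pr\left[|x-\mu|\ge t\right] \lesssim L\sigma^2/t^2$ over $t \ge 2\sqrt{\beta}T$, and it yields the same $\frac{L\sigma^2}{\sqrt{\beta}T} \lesssim \frac{L}{\sqrt{\beta}}\cdot OPT_1$ bound. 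One small caveat: the quantity is not exactly translation-invariant --- recentering changes it by $\mu\Pr\left[|x-\mu| > 2\sqrt{\beta}T\right]$ --- but the paper's own proof makes the same implicit identification of $x$ with $x-\mu$ after its first line, so your reduction to $\mu = 0$ matches the intended (centered) reading of the lemma rather than constituting a gap relative to the paper.
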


\begin{proof}
     We have
     \begin{align*}
         \left|\E\left[x \1_{|x - \mu| \le 2 \sqrt{\beta} T} \right] - \mu\right| &\le \left|\E\left[x \1_{|x - \mu| > 2 \sqrt{\beta} T} \right]\right|\\
         &\le \E\left[|x| \1_{|x - \mu| > 2 \sqrt{\beta} T} \right]\\
         &= \int_{2 \sqrt \beta T}^\infty \Pr\left[|x - \mu| \ge t \right] dt
     \end{align*}
     Note that $\Pr\left[|x - \mu| \ge t \right] \lesssim \frac{ L \sigma^2}{t^2}$ for $t > 2 \sqrt{\beta} T$ since $x$ is $(\beta, L)$-outlier-light, so that 
     \[
         \E\left[(x - \mu)^2 \1_{|x - \mu| > 2\sqrt{\beta} T} \right] \le \E\left[(x-\mu)^2 \1_{|x-\mu| > \beta T} \right] < L \sigma^2
     \]
     So,
     \begin{align*}
         |\E\left[x \1_{|x - \mu| \le 2 \sqrt{\beta} T} \right] - \mu| &\le \int_{2 \sqrt{\beta} T}^\infty \frac{L \sigma^2}{t^2} dt\\
         &\lesssim \frac{L \sigma^2}{\sqrt{\beta} T}\\
         &\lesssim \frac{L}{\sqrt{\beta}} \cdot \sigma \sqrt{\frac{2 \log \frac{1}{\delta}}{n}}
     \end{align*}
\end{proof}

\begin{lemma}
    \label{lem:1d_trimmed_mean_robustness}
     Define $T = \sigma \sqrt{\frac{n}{2 \log \frac{2}{\delta}}}$. Let $x$ be a one-dimensional distribution supported in $[-2 \sqrt \beta T, 2 \sqrt \beta T]$. Let $w \in \{0, 1\}$ be jointly distributed with $x$ such that:
    \begin{itemize}
        \item $\Pr[w = 0] \le O\left(\frac{\log \frac{1}{\delta}}{n} \right)$
    \end{itemize}
    Then,
    \begin{align*}
        |\E[w x] - \E[x]| \lesssim \beta \cdot OPT_1
    \end{align*}
\end{lemma}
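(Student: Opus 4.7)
The plan is a short triangle-inequality argument. First, I would rewrite
\[
  \E[wx] - \E[x] = -\E[(1-w)\,x],
\]
and bound $|\E[(1-w)x]| \le \E[(1-w)|x|]$. Since $(1-w) \in \{0,1\}$ and the support hypothesis gives $|x| \le 2\sqrt{\beta}\,T$ pointwise, this is at most $2\sqrt{\beta}\,T \cdot \Pr[w=0]$.

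Next, I would substitute $T = \sigma\sqrt{n/(2\log(2/\delta))}$ and $\Pr[w=0] \lesssim \log(1/\delta)/n$, and use $\log(2/\delta) \asymp \log(1/\delta)$ to simplify
\[
  2\sqrt{\beta}\,T \cdot \Pr[w=0] \;\lesssim\; \sqrt{\beta}\cdot \sigma \sqrt{\tfrac{\log(1/\delta)}{n}} \;=\; O(\sqrt{\beta}) \cdot OPT_1.
\]
This crude pass already recovers the lemma up to a $\sqrt{\beta}$-versus-$\beta$ discrepancy in the exponent, and it uses nothing beyond the two stated hypotheses.

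The main obstacle is upgrading from $\sqrt{\beta}\cdot OPT_1$ to the stated $\beta \cdot OPT_1$. The naive estimate above is worst-case tight when $\{w=0\}$ concentrates at the boundary $|x| = 2\sqrt{\beta}T$, so the extra $\sqrt{\beta}$ factor must come from additional joint structure of $(w,x)$; notably, the empty second bullet in the lemma's hypothesis list suggests a second condition was intended. The natural upgrade is a Cauchy--Schwarz step
\[
  \E[(1-w)|x|] \;\le\; \sqrt{\Pr[w=0]} \cdot \sqrt{\E[x^2\,\1(w=0)]},
\]
which closes the gap provided an auxiliary bound $\E[x^2\,\1(w=0)] \lesssim \beta\,\sigma^2$ is available. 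In the calling context (Algorithm~\hyperref[alg:two_dimensional_trimmed_mean_estimator]{\textsc{2DOutlierLightEstimator}}), such a bound should follow from the outlier-lightness of the underlying distribution: the event $\{w=0\}$ corresponds to the other coordinate being trimmed, i.e., to an outlier event whose second-moment contribution is controlled by $L\sigma^2 = O(\beta)\sigma^2$. My plan would therefore be to derive the crude $\sqrt{\beta}$ bound first, then isolate and apply this second-moment hypothesis via Cauchy--Schwarz to obtain the claimed $\beta \cdot OPT_1$.
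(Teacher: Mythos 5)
Your ``crude pass'' is exactly the paper's proof: since $w\in\{0,1\}$, $|\E[wx]-\E[x]|=|\E[x\,\1_{w=0}]|\le 2\sqrt{\beta}\,T\cdot\Pr[w=0]\lesssim\sqrt{\beta}\,\sigma\sqrt{\tfrac{\log\frac1\delta}{n}}$, and that is all the paper does. Your instinct about the $\beta$-versus-$\sqrt{\beta}$ discrepancy is also right, but the resolution is simpler than you suggest: the paper's own proof only establishes $\sqrt{\beta}\cdot OPT_1$ (its final display literally reads $\lesssim\sqrt{\beta}\,\sigma\sqrt{2\log\frac1\delta/n}$), and the only place the lemma is invoked, Lemma~\ref{lem:trimmed_mean_1d}, uses it as an $O(\sqrt{\beta})\cdot OPT_1$ bound. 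So the ``$\beta$'' in the lemma statement is best read as a typo or harmless slack, not as something you need to recover; indeed, as you note, with only the two stated hypotheses the bound $\sqrt{\beta}\cdot OPT_1$ is worst-case tight (mass of $\{w=0\}$ at the boundary $|x|=2\sqrt{\beta}T$), so $\beta\cdot OPT_1$ is not provable from the statement as written.

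One arithmetic caution about your proposed upgrade: even granting the auxiliary hypothesis $\E[x^2\,\1_{w=0}]\lesssim\beta\sigma^2$, Cauchy--Schwarz gives
\begin{align*}
  \E\bigl[(1-w)|x|\bigr]\;\le\;\sqrt{\Pr[w=0]}\cdot\sqrt{\E[x^2\,\1_{w=0}]}\;\lesssim\;\sqrt{\tfrac{\log\frac1\delta}{n}}\cdot\sqrt{\beta}\,\sigma\;\asymp\;\sqrt{\beta}\cdot OPT_1,
\end{align*}
which is the same $\sqrt{\beta}$ rate as the crude bound, not $\beta\cdot OPT_1$; to actually reach $\beta\cdot OPT_1$ this way you would need $\E[x^2\,\1_{w=0}]\lesssim\beta^2\sigma^2$, which the outlier-lightness in the calling context does not supply. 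Fortunately none of this is needed: your first two displays already constitute a complete proof of what the paper proves and uses.
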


\begin{proof}
    Since $w \in \{0, 1\}$,
    \begin{align*}
        \left|\E[w x] - \E[x]\right| = |\E[x\1_{w = 0}]| \le 2 \sqrt \beta T \cdot O\left(\frac{\log \frac{1}{\delta}}{n} \right) \lesssim \sqrt \beta \sigma \sqrt{\frac{2 \log \frac{1}{\delta}}{n}}
    \end{align*}
\end{proof}

\begin{lemma}
    \label{lem:trimmed_mean_1d}
   Define $T = \sigma \sqrt{\frac{n}{2 \log \frac{2}{\delta}}}$.
  Let $x$ be a one-dimensional $(\beta, L)$-outlier-light distribution with mean $\mu$ and variance at most $\sigma^2$.
    Let $w \in \{0, 1\}$ be jointly distributed with $x$ such that:
    \begin{itemize}
        \item $\Pr[w = 0] \le O\left(\frac{\log \frac{1}{\delta}}{n} \right)$
        \item $w = 0$ if $\abs{x - \mu} > 2 \sqrt{\beta} T$
    \end{itemize}
    Then with $1-\delta$ probability, given $n$ independent samples $(x_i, w_i)$, we have
    \[
    \left|\frac{1}{n} \sum_{i=1}^n w_i x_i - \mu\right| \leq \left(1 + O\left(\sqrt{\beta} + \frac{L}{\sqrt{\beta}}\right)\right) \cdot OPT_1.
    \]
\end{lemma}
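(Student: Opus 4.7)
The plan is to decompose
\[
\frac{1}{n}\sum_{i=1}^n w_i x_i - \mu = A + B + C,
\]
where $A = \frac{1}{n}\sum_{i=1}^n w_i(x_i - \mu) - \E[w(x-\mu)]$ is a centered random sum, $B = \E[w(x-\mu)]$ is the bias introduced by the weighting, and $C = \mu\bigl(\frac{1}{n}\sum_{i=1}^n w_i - 1\bigr)$ is the residual coming from normalizing by $n$ rather than $\sum w_i$. I bound each term separately and take a union bound.

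For $A$, the assumption ``$w = 0$ whenever $|x - \mu| > 2\sqrt{\beta}T$'' forces $|w_i(x_i - \mu)| \le 2\sqrt{\beta}T$ almost surely, while $\Var(w_i(x_i - \mu)) \le \E[(x - \mu)^2] \le \sigma^2$. Bernstein's inequality together with $T = \sigma\sqrt{n/(2\log(2/\delta))}$ then gives $|A| \le \sigma\sqrt{2\log(6/\delta)/n} + O(\sqrt{\beta}T \log(6/\delta)/n) = (1 + O(\sqrt{\beta})) \cdot OPT_1$ with probability $1 - \delta/3$; here the boundedness term simplifies to $O(\sqrt{\beta})\cdot OPT_1$ by the same arithmetic as in the proof of Lemma~\ref{lem:1d_trimmed_mean_robustness}.

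For $B$, decompose $\E[w(x-\mu)] = \E[(x-\mu)\1_{|x-\mu|\le 2\sqrt{\beta}T}] - \E[(x-\mu)\1_{|x-\mu|\le 2\sqrt{\beta}T,\,w=0}]$. Since $\E[x-\mu] = 0$, the first piece equals $-\E[(x-\mu)\1_{|x-\mu|>2\sqrt{\beta}T}]$, and the same tail-integral computation as in Lemma~\ref{lem:trimming_doesnot_change_mean_much} (applied to $x - \mu$ rather than $x$) shows its magnitude is at most $O(L/\sqrt{\beta})\cdot OPT_1$. The second piece is bounded in absolute value by $2\sqrt{\beta}T\cdot\Pr[w=0] \le O(\sqrt{\beta}T\log(1/\delta)/n) = O(\sqrt{\beta})\cdot OPT_1$, exactly as in Lemma~\ref{lem:1d_trimmed_mean_robustness}. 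Therefore $|B| \le O(\sqrt{\beta}+L/\sqrt{\beta})\cdot OPT_1$. For $C$, the $1 - w_i$ are i.i.d.\ Bernoulli with mean at most $O(\log(1/\delta)/n)$, so Bernstein (or Chernoff) yields $\frac{1}{n}\sum (1-w_i) = O(\log(1/\delta)/n)$ with probability $1 - \delta/3$, so $|C| \le |\mu|\cdot O(\log(1/\delta)/n)$.

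The main obstacle is pinning down the leading constant of $1$ on $OPT_1$. This works because Bernstein's two-regime bound splits cleanly into a variance-term, which gives $(1+o(1))\cdot OPT_1$ (using $\log(6/\delta)/\log(1/\delta) \to 1$), and a boundedness-term of lower order $O(\sqrt{\beta})\cdot OPT_1$, with the choice of $T$ being precisely what balances the two. A secondary technicality is the residual $C$, which is $o(OPT_1)$ only when $|\mu| = O(T)$; this is satisfied in the downstream application of Algorithm~\ref{alg:two_dimensional_trimmed_mean_estimator}, where we may shift by the initial estimate $\mu_0$ (so effectively $|\mu| \le |\mu - \mu_0| = O(\sigma\sqrt{\log(1/\delta)/n}) \ll T$), making this contribution negligible. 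A union bound over the three high-probability events then gives the claimed $(1 + O(\sqrt{\beta} + L/\sqrt{\beta}))\cdot OPT_1$ error.
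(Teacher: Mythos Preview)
Your argument is correct and follows essentially the same route as the paper: bound the bias of the trimmed/weighted mean via Lemmas~\ref{lem:trimming_doesnot_change_mean_much} and~\ref{lem:1d_trimmed_mean_robustness}, then apply Bernstein's inequality to get the $(1+O(\sqrt{\beta}))\cdot OPT_1$ concentration. The only difference is that you center at $\mu$ first and apply Bernstein to $w(x-\mu)$ (which is genuinely bounded by $2\sqrt{\beta}T$), whereas the paper applies Bernstein directly to $wx$ and writes ``$|wx-\mu|\le 2\sqrt{\beta}T$'' (which fails when $w=0$ unless $|\mu|$ is small); your residual term $C$ makes this $|\mu|$-dependence explicit, and you correctly observe that it vanishes once one shifts by the initial estimate $\mu_0$ as in the downstream Lemma~\ref{lem:2d_trimmed_mean}.
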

\begin{proof}
    First, note that
    \begin{align*}
        \left|\E\left[w x \right] - \mu\right| &= \left|\E\left[wx \1_{|x - \mu| \le 2 \sqrt \beta T} \right] - \mu\right|\\
        &\le \left|\E\left[wx \1_{|x - \mu| \le 2 \sqrt \beta T} \right] - \E\left[ x \1_{|x - \mu| \le 2 \sqrt \beta T}\right]\right| + \left|\E\left[ x \1_{|x - \mu| \le 2 \sqrt \beta T}\right] - \mu\right|\\
        &\lesssim \left(\sqrt \beta + \frac{L}{\sqrt \beta} \right)\cdot OPT_1
    \end{align*}
    by Lemma~\ref{lem:trimming_doesnot_change_mean_much}~and~\ref{lem:1d_trimmed_mean_robustness}.

    Now since $|wx - \mu| \le 2 \sqrt \beta T$, and its variance is at most $\sigma^2$, by Bernstein's inequality,
    \begin{align*}
        \left|\frac{1}{n} \sum_{i=1}^n w_i x_i - \E\left[w x \right] \right| &\le \sigma \sqrt{\frac{2 \log \frac{1}{\delta}}{n}} + O\left(\sqrt \beta T \cdot \frac{\log \frac{1}{\delta}}{n} \right)\\
        &\le \left(1 + O(\sqrt \beta) \right) \cdot \sigma \sqrt{\frac{2 \log \frac{1}{\delta}}{n}}
    \end{align*}
    So, the claim follows.
\end{proof}

\lemtwodoutlierlightestimator*
\begin{proof}
    We will let $w \in \{0, 1\}$ be jointly distributed with $x$ such that $w = 1$ iff $x$ would not be thrown out in Algorithm~\hyperref[alg:two_dimensional_trimmed_mean_estimator]{\textsc{2DOutlierLightEstimator}}.

    \begin{itemize}
        \item 
        \textbf{$w = 0$ if $|\inner{u, x - \mu}|> 2 \sqrt{\beta} T$ for any $u \in \S^1$.}
        Since $|\mu_0^j - \inner{e_j, \mu}| \le O\left(\frac{\log \frac{1}{\delta}}{n} \right) \le \frac{\sqrt{\beta}T}{4}$, and since any sample with $|\inner{e_j, x_i} - \mu_0^j| > \sqrt{\beta} T$ is thrown out, $w = 0$ for any $x$ with $|\inner{e_j, x} - \inner{e_j, \mu}| > \frac{5 \sqrt{\beta} T}{4}$.
    
        Furthermore, for any $u \in \S^1$, we have that if $|\inner{u, x} - \inner{u, \mu}| > 2 \sqrt{\beta} T$, then, for some $j \in \{1, 2\}$,
        \begin{align*}
             |\inner{e_j, x} - \inner{e_j, \mu}| \ge \frac{1}{\sqrt{2}} |\inner{u, x} - \inner{u, \mu}| > \sqrt{2} \beta T > \frac{5 \sqrt{\beta}T}{4}
        \end{align*}
        so that $w = 0$ for any such $x$.
        So, $w$ satisfies that $w = 0$ if $|\inner{u, x - \mu}| > 2 \sqrt{\beta} T$.
        \item 
        \textbf{$\Pr[w=0] \le O\left(\frac{\log \frac{1}{\delta}}{n} \right)$.} Since $x$ is $(\beta, O(\beta))$-outlier-light, which means that $\E\left[|\inner{u, x - \mu}| \ge \beta T \right] \lesssim \beta \sigma^2$ for every $u \in \S^1$, we have that
        \begin{align*}
            \Pr\left[|\inner{u, x - \mu}| \ge 2 \sqrt{\beta} T \right] \le \Pr\left[|\inner{u, x - \mu}| \ge \beta T \right] \lesssim \frac{\beta \sigma^2}{\beta^2 T^2} \lesssim \frac{\log \frac{1}{\delta}}{n}
        \end{align*}
        So, since $|\mu_0^{e_j} - \inner{e_j, \mu}| \le O\left(\frac{\log \frac{1}{\delta}}{n} \right) \le \sqrt{\beta}T$, we have
        \begin{align*}
            \Pr\left[|\inner{e_j, x - \mu_0^{e_j}}| \ge \sqrt{\beta} T \right] \le \Pr\left[|\inner{e_j, x - \mu}| \ge 2 \sqrt{\beta} T \right] \lesssim \frac{\log \frac{1}{\delta}}{n}
        \end{align*}
        so that $\Pr[w = 0] \lesssim \frac{\log \frac{1}{\delta}}{n}$.
    \end{itemize}
    Now, let $U$ be a $\rho$-net as in Assumption~\ref{assumption:rho_net_U}, for $\rho = \delta^{\Theta(\sqrt{\beta})}$. By Lemma~\ref{lem:trimmed_mean_1d} and union bound, with probability $1 - \delta$, for every $u \in U$ simultaneously, and the estimate $\wh \mu$ returned by Algorithm~\hyperref[alg:two_dimensional_trimmed_mean_estimator]{\textsc{2DOutlierLightEstimator}},
    \begin{align*}
        |\inner{u, \wh \mu - \mu}| &\le \left(1 + O\left(\sqrt \beta \right) \right) \cdot \sigma \sqrt{\frac{2 \log \frac{2 |U|}{\delta}}{n}}\\
        &= \left( 1 + O(\sqrt{\beta})\right) \cdot \sigma \cdot \sqrt{\frac{2 \log \frac{2}{\delta}}{n}}
    \end{align*}
    Then, we have
    \begin{align*}
        \|\wh \mu - \mu\| &= \sup_{v : \|v\| = 1} \left|\inner{v, \wh \mu - \mu} \right|\\
        &\le \sup_{u \in U}\left|\inner{u, \wh \mu - \mu} \right| + \rho \|\wh \mu - \mu\|\\
        &\le \left(1 + O\left(\sqrt{\beta} \right) \right) \cdot \sigma \sqrt{\frac{2 \log \frac{2}{\delta}}{n}} + \delta^{\Theta(\sqrt{\beta})} \| \wh \mu - \mu\|
    \end{align*}
    so that
    \begin{align*}
        \|\wh \mu - \mu\| \le \left(1 + O\left(\sqrt \beta \right) \right)  \cdot \sigma \sqrt{\frac{2 \log \frac{2}{\delta}}{n}}
    \end{align*}
    as claimed.
\end{proof}

\subsection{Final Improved Two-Dimensional Estimator}
\setcounter{algorithm}{3}
\begin{algorithm}[H]
\caption{\textsc{2DHeavyTailedEstimator}}
    \vspace{0.2cm}
    \paragraph{Input parameters:}
    \begin{itemize}
        \item Failure probability $\delta$, Two-dimensional samples $x_1, \dots, x_n$, $\psi$ function, Scaling parameter $T$, Inlier-Outlier-Lightness parameters $\beta, L$, Approximation parameters $0< \xi, \tau < 1$, set of unit vectors $U$
    \end{itemize}
    \begin{enumerate}
        \item Using $\Theta(\xi) n$ samples, compute Median-of-Means estimates $\mu_0^{e_j}$ of the one-dimensional samples $\inner{e_j, x_i}$ with failure probability $\frac{\delta}{4(|U|+2)}$ for each $j \in \{1, 2\}$.
        \item Using $\Theta(\xi) n$ samples, compute Median-of-Means estimates $\mu_0^u$ of the one-dimensional samples $\inner{u, x_i}$ with failure probability $\frac{\delta}{4(|U|+2)}$ for each $u \in U$.
        \item Let the set of the remaining $(1 - \Theta(\xi)) n$ samples be $X'$. Run Algorithm~\hyperref[alg:2d_inlier_vs_outlier_light-tester]{\textsc{2DInlierOutlierLightTester}} using failure probability $\delta/4$, the samples in $X'$ and initial estimates $\mu_0^{e_1}, \mu_0^{e_2}$.
        \item If the output of \hyperref[alg:2d_inlier_vs_outlier_light-tester]{\textsc{2DInlierOutlierLightTester}} is some $e_j$, run \hyperref[alg:two-d-catoni]{\textsc{2DInlierLightEstimator}} using failure probability $\delta/8$, the samples in $X'$, and the initial estimates $\mu_0^u$, and output its mean estimate $\wh \mu$.
        \item If instead the output of \hyperref[alg:2d_inlier_vs_outlier_light-tester]{\textsc{2DInlierOutlierLightTester}} is $\perp$, run \hyperref[alg:two_dimensional_trimmed_mean_estimator]{\textsc{2DOutlierLightEstimator}} using failure probability $\delta/4$, the samples in $X'$ and initial estimates $\mu_0^{e_1}, \mu_0^{e_2}$. Return its output $\wh \mu$.
    \end{enumerate}
\end{algorithm}

\thmtwodestimator*
\begin{proof}
    First note that by classical results on Median-of-Means~\citep{mom_estimator} and a union bound, for every vector $v \in U \cup \{e_1, e_2\}$, we have with probability $1 - \delta/4$,
    \begin{align*}
        \left|\mu_0^v - \inner{v, \mu} \right| \le O\left(\sigma \sqrt{\frac{\log \frac{1}{\delta}}{n}} \right)
    \end{align*}
since $n > O_\xi( \log \frac{1}{\delta})$. For the remaining proof, we condition on the above. Now, by a union bound, there exist constants $0 < \beta, L < 1$, such that by Lemmas~\ref{lem:2d_inlier_outlier_testing},~\ref{lem:two-d-catoni}~and~\ref{lem:2d_trimmed_mean}, the following events happen with probability $1 - 3\delta/4$.
    \begin{itemize}
        \item If the output of Algorithm~\hyperref[alg:2d_inlier_vs_outlier_light-tester]{\textsc{2DInlierOutlierLightTester}} is $e_j$, $\inner{e_j, x_i}$ is $(\beta, L)$-inlier-light. On the other hand, if the output is $\perp$, $x_i$ is $(8 \beta, 8 L)$-outlier-light.
        \item If $\inner{e_j, x_i}$ is $(\beta, L)$-inlier-light, Algorithm~\hyperref[alg:two-d-catoni]{\textsc{2DInlierLightEstimator}} returns $\wh \mu$ with
        \begin{align*}
            \|\wh \mu - \mu\| &\le (1 - 2\tau + \Theta(\xi)) \cdot JUNG_2 \cdot \sigma \sqrt{\frac{2 \log \frac{2}{\delta}}{n}}\\
            &\le (1 - \tau)  \cdot JUNG_2 \cdot \sigma \sqrt{\frac{2 \log \frac{2}{\delta}}{n}}
        \end{align*}
        \item If $x_i$ is $(8\beta, 8L)$-outlier-light, for $L = O(\beta)$, Algorithm~\hyperref[alg:two_dimensional_trimmed_mean_estimator]{\textsc{2DOutlierLightEstimator}} returns $\wh \mu$ with
            \begin{align*}
                \| \wh \mu - \mu \| \le \left(1 + O\left(\sqrt{\beta} \right) \right) \cdot \sigma \sqrt{\frac{2 \log \frac{1}{\delta}}{n}}
            \end{align*}
    \end{itemize}
    So, with probability $1 - \delta$ in total, for $\beta$ small enough, 
    Algorithm~\hyperref[alg:final_two_dimensional_estimator]{\textsc{2DHeavyTailedEstimator}} returns estimate $\wh \mu$ with 
    \begin{align*}
        \| \wh \mu - \mu\| \le (1 - \tau) \cdot JUNG_2 \cdot \sigma \sqrt{\frac{2 \log \frac{2}{\delta}}{n}}
    \end{align*}
\end{proof}
%

\subsection{Improved $d$-Dimensional Estimator}
\label{appendix:d_dim_estimator}
\paragraph{Notation.} For $x \in \R^d$ and subspace $W$, we will let $x_{\| W}$ mean the projection of $x$ onto $W$.
\begin{assumption}
\label{assumption:subspace_net}
    $V \subset \left(\S^{d-1} \right)^2$ is a set of size $\left( \frac{1}{\zeta}\right)^{O(d)}$ of pairs of vectors $(v^1, v^2)$ with $W$ the subspace spanned by vectors in pair $j$. Let $W^\perp$ be the subspace orthogonal to $W$. Then, for any $x \in \R^d$, there exists $(v^1, v^2) \in V$ such that for the subspace $W$ spanned by $(v^1, v^2)$, $\|x_{\|W^\perp}\| \le \zeta \|x\|$.
\end{assumption}

Note that for every $\zeta < 1$ and $d$, there exists a set $V$ satisfying the above assumption. In particular, if we let $Z \subset \R^d$ be a $\zeta$-net of size $\left(\frac{1}{\zeta} \right)^{O(d)}$, and then let $V \subset (\R^d)^2$ be the set of pairs $(z, w)$ for each $z \in Z$ and any vector $w$ orthogonal to $z$, then $V$ satisfies Assumption~\ref{assumption:subspace_net}.
\setcounter{algorithm}{7}
\begin{algorithm}[H]\caption{\textsc{HighDimensionalHeavyTailedEstimator}}
    \label{alg:final_d_dimensional_estimator} 
    \paragraph{Input parameters:}
    \begin{itemize}
        \item Failure probability $\delta$, $d$-dimensional samples $x_1, \dots, x_n$, Covariance bound $\sigma^2 I_d$.
    \end{itemize}
    \begin{enumerate}
        \item Let $\beta, L, \xi, \tau, \zeta$ be sufficiently small universal constants as given by Theorem~\ref{thm:heavy_upper}~and~\ref{thm:2d_estimator}. Let $\psi$ be a function Assumption~\ref{assumption:improved_catoni_psi}. Let $T = \sigma \sqrt{\frac{\log \frac{2}{\delta}}{n}}$. Let $U \subset \R^2$ satisfy Assumption~\ref{assumption:rho_net_U} for $\rho = \delta^{\Theta(\xi)}$, and let $V \subset (\R^d)^2$ satisfy Assumption~\ref{assumption:subspace_net}.
        \item For each pair of vectors $(v^1, v^2) \in V$, let $W$ be the subspace spanned by them. Let $x_{\|W}$ be the projection of vector $x$ onto $W$.
        \item For each $(v^1, v^2) \in V$ with associated subspace $W$, run Algorithm~\ref{alg:final_two_dimensional_estimator} using samples $x_{1\| W}, \dots, x_{n\|W}$ with failure probability $\delta/|V|$, and approximation parameters $\xi, \Theta(\tau)$, and let the output be two-dimensional mean estimate $\wh \mu_{W}$.
        \item For each $(v^1, v^2) \in V$ with associated subspace $W$, consider the set $S_{(v^1, v^2)} = \left\{w : \|w_{\|W} - \wh \mu_{W}\| \le (1  - \Theta(\tau)) \cdot JUNG_2 \cdot  \sqrt{\frac{2 \log \frac{2}{\delta}}{n}} \right \}$. Let $S$ be the convex set given by $S:= \cap_{(v^1, v^2) 
        \in V} S_{(v^1, v^2)}$.
        \item Return the center $\wh \mu$ of the minimum enclosing ball of the set $S$ as the mean estimate.
    \end{enumerate}
\end{algorithm}

%
\heavyupperthm*

\begin{proof}
    By Theorem~\ref{thm:2d_estimator} and a union bound, with probability $1 - \delta$,
    \begin{align*}
        \|\wh \mu_{W} - \mu_{\| W}\| &\le (1 - \Theta(\tau) + \xi) \cdot JUNG_2 \cdot \sigma \sqrt{\frac{2 \log \frac{1|V|}{\delta}}{n}}\\
        &\le O\left(\sqrt{\frac{d \log \frac{1}{\zeta}}{n}}\right) + \left(1 - \Theta(\tau) +\xi \right) \cdot JUNG_2 \cdot \sigma \sqrt{\frac{2 \log \frac{1}{\delta}}{n}}\\
        &\le \left(1 - \Theta(\tau) \right) \cdot JUNG_2 \cdot \sigma \sqrt{\frac{2 \log \frac{1}{\delta}}{n}}
    \end{align*}
    since $n \ge C^2 d$. Thus, conditioned on the above, $\mu$ when projected onto any subspace $W$ spanned by $(v^1, v^2) \in V$, lies in $S$ projected onto $W$. So, by Theorem~\ref{thm:generalized_jung}, for the center $\wh \mu$ of the minimum enclosing ball of $S$, and any $W$ spanned by $(v^1, v^2) \in V$,
    \begin{align*}
        \|(\wh \mu - \mu)_{\|W}\| \le \left(1 - \Theta(\tau) \right) \cdot JUNG_d \cdot \sigma \sqrt{\frac{2 \log \frac{1}{\delta}}{n}}
    \end{align*}
    Then, by Assumption~\ref{assumption:subspace_net}, there exists $(v^1 v^2) \in V$ with associated subspace $W$ such that \[\|(\wh \mu - \mu)_{\|W^\perp}\| \le \zeta \|\wh \mu - \mu\|\] So,
    \begin{align*}
        \|\wh \mu - \mu\| &= \|(\wh \mu - \mu)_{\| W}\| + \|(\wh \mu - \mu)_{\|W^\perp}\|\\
        &\|(\wh \mu - \mu)_{\| W}\| + \zeta \cdot \| \wh \mu - \mu\|
    \end{align*}
    so that
    \begin{align*}
        \|\wh \mu - \mu\| \le (1 - \tau) \cdot JUNG_d \cdot \sigma \sqrt{\frac{2 \log \frac{2}{\delta}}{n}}
    \end{align*}
    for $\tau$ sufficiently small.
\end{proof}

\section{Robust Lower bound}\label{sec:robust-lower}


Let $v_1, \dotsc, v_{d+1}$ be the $d+1$ vertices of a regular
$d$-dimensional simplex centered at the origin, with $\norm{v_i} = 1$.
Then $\sum v_i = 0$ and $\inner{v_i, v_j} = -\frac{1}{d}$ for
$i \neq j$.

For $\eps \leq \frac{1}{d+1}$, define $D^*$ be the distribution that is
each $v_i$ with probability $\eps$, and $0$ with the remaining
probability $1-\eps(d+1)$ probability.  So $D^*$ has mean $0$ and
an isotropic variance of
\[
  \E_{x \sim D^*}[\inner{v_1, x}^2] = \eps \cdot 1 + (d \eps) \frac{1}{d^2} = \frac{d+1}{d} \eps.
\]

For each $j \in [d+1]$, let $D_j$ be the same as $D^*$ except
replacing $v_j$ with $-v_j$.  Then $D_j$ has mean $- 2 \eps v_j$.  For
every direction $u \perp v_j$, $D_j$ has the same variance
$\frac{d+1}{d} \eps$ as $D^*$; and the variance in direction $v_j$ is
\[
  \E_{x \sim D^*}[\inner{v_j, x}^2] - \E_{x \sim D^*}[\inner{v_j, x}]^2 = \frac{d+1}{d} \eps - 4 \eps^2 < \frac{d+1}{d} \eps.
\]

Thus each $D_j$ has covariance $\Sigma \preceq \frac{d+1}{d} \eps I$,
and $TV(D^*, D_j) = \eps$ for all $j$.

Informally, this means that robust mean estimation, on input
$(D^*, \sigma, \eps)$, needs to output a mean $\wh{\mu}$ that is good
for each $D_j$; the best it can do is output $0$, which has error
$2 \eps$ for each $i$.  Thus the error is
\[
  2 \eps = \sqrt{\frac{2d}{d+1}} \sqrt{2\norm{\Sigma} \eps}
\]
This constant, $\sqrt{\frac{2d}{d+1}}$, is $JUNG_d$.  More formally,
we start with this lemma:

\begin{lemma}\label{lem:simplexdist}
  Let $v_1, \dotsc, v_{d+1} \in \R^d$ be vertices of a regular
  simplex centered at the origin.  Then for any vector $u \in \R^d$.
  \[
    \E_{i \in [d+1]}[ \norm{v_i - u} ] \geq \norm{v_1}.
  \]
\end{lemma}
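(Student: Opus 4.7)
The plan is to apply a single Cauchy--Schwarz step per vertex and then average, using the centering condition $\sum_i v_i = 0$. Since all vertices satisfy $\|v_i\| = \|v_1\| =: r$, the unit vectors $v_i/r$ are well-defined, and Cauchy--Schwarz gives the lower bound
\[
\|v_i - u\| \;\geq\; \Bigl\langle v_i - u,\; \tfrac{v_i}{r} \Bigr\rangle \;=\; r - \tfrac{1}{r}\langle u, v_i\rangle
\]
for each $i$. The point of projecting onto the direction $v_i/r$ (rather than, say, $(v_i-u)/\|v_i-u\|$) is that the inner product $\langle u, v_i\rangle$ is linear in $v_i$, so that the unwanted $u$-dependent terms vanish when we average over $i$.

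Then I would average the above inequality over $i \in [d+1]$ and use the regular-simplex property $\sum_{i=1}^{d+1} v_i = 0$:
\[
\E_{i \in [d+1]} \|v_i - u\| \;\geq\; r - \tfrac{1}{r}\,\Bigl\langle u,\; \E_{i \in [d+1]} v_i \Bigr\rangle \;=\; r \;=\; \|v_1\|.
\]
This is exactly the claimed bound, and it is tight (attained at $u = 0$).

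There is no real obstacle; the only thing to notice is that a naive Cauchy--Schwarz direction like $(v_i-u)/\|v_i-u\|$ would not allow us to exploit $\sum v_i = 0$, whereas choosing the direction $v_i/\|v_i\|$ does. The regular-simplex structure enters only through the two facts $\|v_i\| = \|v_1\|$ and $\sum v_i = 0$; the stronger property $\langle v_i, v_j\rangle = -1/d$ for $i \ne j$ is not needed for this lemma.
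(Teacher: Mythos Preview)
Your proof is correct and takes a genuinely different route from the paper. The paper's argument writes $u$ in barycentric coordinates $u = \sum_i a_i v_i$, symmetrizes over permutations $\pi$ of the vertices (using that a regular simplex is permutation-symmetric to argue $\E_i \|v_i - u_\pi\| = \E_i \|v_i - u\|$), and then applies Jensen's inequality together with $\E_\pi[u_\pi]=0$. Your argument instead projects each $v_i - u$ onto the unit direction $v_i/\|v_i\|$ via Cauchy--Schwarz and averages, so the cross term $\langle u, v_i\rangle$ cancels by $\sum_i v_i = 0$. Your approach is shorter and, as you observe, uses only the two facts $\|v_i\|\equiv r$ and $\sum_i v_i = 0$, whereas the paper's symmetrization step implicitly invokes the full permutation symmetry of the regular simplex. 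Both arguments yield equality at $u=0$.
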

\begin{proof}
  We can write $u$ in barycentric coordinates, $u = \sum a_i v_i$ for
  $\sum a_i = 1$.  Then for any permutation $\pi$ of $[d+1]$, we write
  $u_{\pi} := \sum a_{\pi(i)} v_i$.  By symmetry, this satisfies
  \[
     \E_{i \in [d+1]}[ \norm{v_i - u_\pi} ] = \E_{i \in [d+1]}[ \norm{v_i - u} ].
   \]
   By choosing $\pi$ to be a uniform permutation,
   \[
     \E_{i \in [d+1]}[ \norm{v_i - u} ] =  \E_{\pi} \E_{i \in [d+1]}[ \norm{v_i - u_\pi} ] \geq  \E_{i \in [d+1]}[ \norm{v_i - \E_{\pi}[u_\pi]} ] = \E_{i \in [d+1]}[ \norm{v_i}] = \norm{v_1}.
   \]
   
\end{proof}

\begin{lemma}\label{lem:robust-lower-smalleps}
  For every $d \geq 1$ and $\eps \leq \frac{1}{d+1}$, every algorithm for robust estimation of
  $d$-dimensional distributions with covariance
  $\Sigma \preceq \sigma^2 I$ has error rate
  \[
    \E[\norm{\wh{\mu} - \mu}] \geq JUNG_d \cdot \sqrt{2 \sigma^2 \eps}
  \]
  on some input distribution.
\end{lemma}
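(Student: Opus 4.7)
The plan is to use the family $\{D_j\}_{j \in [d+1]}$ already constructed in the excerpt as the hard instances, and apply Lemma~\ref{lem:simplexdist} to rule out any single output $\wh\mu$ from being simultaneously close to every $\mu_j := -2\eps v_j$. Since $TV(D_j, D^*) = \eps$ for every $j$, the adversary is free to choose any $j$, draw clean samples from $D_j$, and then $\eps$-corrupt them so that the algorithm's input distribution is exactly $D^*$; in the population limit the algorithm's output $\wh\mu$ is therefore a (possibly random) function of $D^*$ alone, independent of $j$. Let $p$ denote the distribution of $\wh\mu$.

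The next step is the averaging argument. Taking worst case over $j$,
\[
\max_{j \in [d+1]} \E_{\wh\mu \sim p}\bigl[\norm{\wh\mu - \mu_j}\bigr] \;\geq\; \E_{j \in [d+1]} \E_{\wh\mu \sim p}\bigl[\norm{\wh\mu - \mu_j}\bigr] \;=\; \E_{\wh\mu \sim p} \E_{j \in [d+1]}\bigl[\norm{\wh\mu - \mu_j}\bigr].
\]
The points $\mu_j = -2\eps v_j$ are the vertices of a regular $d$-simplex centered at the origin with circumradius $2\eps$, so Lemma~\ref{lem:simplexdist} (applied after reflecting through the origin, which does not change the set) gives $\E_{j}[\norm{\wh\mu - \mu_j}] \geq \norm{\mu_1} = 2\eps$ for every $\wh\mu$. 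Thus there is some $j^*$ on which the algorithm incurs expected error at least $2\eps$.

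Finally, I convert $2\eps$ into the claimed form. Each $D_j$ has covariance bounded by $\sigma^2 I$ with $\sigma^2 = \frac{d+1}{d}\eps$, so
\[
JUNG_d \cdot \sqrt{2 \sigma^2 \eps} \;=\; \sqrt{\tfrac{2d}{d+1}} \cdot \sqrt{2 \cdot \tfrac{d+1}{d} \eps \cdot \eps} \;=\; 2\eps,
\]
matching the lower bound and completing the proof. I do not expect a main obstacle: the distributions and the moment bounds are already set up in the excerpt, and Lemma~\ref{lem:simplexdist} is exactly the convex geometry fact needed. The only mild care is in phrasing the reduction so that the algorithm's randomness is handled correctly---hence the averaging form above, which sidesteps worrying about whether $\wh\mu$ is deterministic.
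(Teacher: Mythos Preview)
Your proposal is correct and follows essentially the same approach as the paper: use the simplex family $\{D_j\}$ with the adversary mapping each $D_j$ to $D^*$, then apply Lemma~\ref{lem:simplexdist} to the means $\mu_j = -2\eps v_j$ to get the $2\eps$ lower bound, and finish with the identity $2\eps = JUNG_d\sqrt{2\sigma^2\eps}$ for $\sigma^2 = \frac{d+1}{d}\eps$. The paper phrases the reduction by taking $j$ uniformly at random rather than via your $\max_j \geq \E_j$ step, but the content is identical.
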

\begin{proof}
  Take the distributions $D^*$, $D_j$ described above, so $\sigma^2 = \frac{d+1}{d} \eps$.  Suppose the
  true distribution is $D_j$ for a random $j \in [d+1]$, and the
  adversary perturbs each $D_j$ into $D^*$, then gives the adversary
  samples from $D^*$.  The algorithm's output $\wh{\mu}$ is
  independent of $j$, and has expected error
  \[
    \E[\norm{\wh{\mu} - \mu}]= \E_{\wh{\mu}, j}[\norm{\wh{\mu} - (- 2\eps v_j)}].
  \]
  By Lemma~\ref{lem:simplexdist}, this is at least $2 \eps$.  Thus
  \[
    \E[\norm{\wh{\mu} - \mu}] \geq 2 \eps = \sqrt{\frac{2 d}{d+1}}\sqrt{2 \sigma^2 \eps} = JUNG_d \sqrt{2 \sigma^2\eps}.
  \]
\end{proof}

Finally, we remove the restriction that $\eps \leq \frac{1}{d+1}$ by
applying the above lemma to $(1/\eps - 1)$-dimensional space.

\robustlowerthm*


\begin{proof}
  If $d \leq \frac{1}{\eps} - 1$, this is the same as
  Lemma~\ref{lem:robust-lower-smalleps}.  For
  $d > \frac{1}{\eps} - 1$, we instead restrict to a
  $d' = \floor{\frac{1}{\eps} - 1}$-dimensional space before
  applying Lemma~\ref{lem:robust-lower-smalleps}.  Thus
  \[
    \E[\norm{\wh{\mu} - \mu}] \geq JUNG_{d'} \cdot \sqrt{2 \sigma^2 \eps}
  \]
  Now,
  \[
    JUNG_{d'} = \sqrt{\frac{2d'}{d'+1}} = \sqrt{2} \sqrt{1 - \frac{1}{\floor{1/\eps}}} \geq \sqrt{2} \cdot (1 - \eps) \geq JUNG_d \cdot (1 - \eps).
  \]
\end{proof}

\section{Robust Estimation, Upper Bound}\label{sec:robust-upper}

The following result is folklore:

\begin{lemma}\label{lem:robust1d}
  If $X, Y$ are real-valued variables with $\Var(X), \Var(Y) \leq \sigma^2$ and $TV(X, Y) \leq 2\eps$, then

  \[
    \E[X] - \E[Y] \leq \frac{2  \sqrt{2\sigma^2 \eps}}{\sqrt{1-2\eps}}
  \]
\end{lemma}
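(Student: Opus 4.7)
The plan is to exploit the standard mixture decomposition that $TV(X,Y) \le 2\eps$ provides, use the law of total variance to turn the variance bound into a bound on how far apart the ``non-common'' parts of the two distributions can sit, and then recombine.

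First, I would invoke the maximal-coupling / mixture decomposition of total variation: since $TV(X,Y) \le 2\eps$, there exists $r \ge 1 - 2\eps$ and distributions $R, P, Q$ such that
\begin{align*}
    X \sim r R + (1-r) P, \qquad Y \sim r R + (1-r) Q.
\end{align*}
This immediately gives $\E[X] - \E[Y] = (1-r)(\mu_P - \mu_Q)$, where I write $\mu_P = \E[P]$ etc. The whole game is now to bound $|\mu_P - \mu_Q|$ using the hypothesis $\Var(X), \Var(Y) \le \sigma^2$.

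The key step is to apply the law of total variance to each mixture. Writing $Z \in \{0,1\}$ for the mixture indicator, $\Var(X) = \E[\Var(X \mid Z)] + \Var(\E[X \mid Z])$, and dropping the within-component variance gives
\begin{align*}
    \Var(X) \;\ge\; r(1-r)(\mu_R - \mu_P)^2, \qquad \Var(Y) \;\ge\; r(1-r)(\mu_R - \mu_Q)^2.
\end{align*}
Adding and using the elementary inequality $a^2 + b^2 \ge \tfrac12 (a-b)^2$ with $a = \mu_R - \mu_P$, $b = \mu_R - \mu_Q$ yields $2\sigma^2 \ge \tfrac12 r(1-r)(\mu_P - \mu_Q)^2$, so $|\mu_P - \mu_Q| \le 2\sigma / \sqrt{r(1-r)}$.

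Finally, plugging back in and using $1-r \le 2\eps$ (hence $r \ge 1 - 2\eps$),
\begin{align*}
    |\E[X] - \E[Y]| \;=\; (1-r)\,|\mu_P - \mu_Q| \;\le\; 2\sigma\sqrt{\tfrac{1-r}{r}} \;\le\; 2\sigma\sqrt{\tfrac{2\eps}{1-2\eps}} \;=\; \frac{2\sqrt{2\sigma^2 \eps}}{\sqrt{1-2\eps}},
\end{align*}
which is exactly the claimed bound. There is no real obstacle here; the only thing to be slightly careful about is justifying the mixture decomposition in the continuous (non-atomic) case, which is handled by the standard Hahn-style argument on $\min(p,q)$ where $p, q$ are densities (or Radon--Nikodym derivatives with respect to $P_X + P_Y$).
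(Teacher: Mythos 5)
Your proof is correct, and it takes a genuinely different route from the paper's. The paper works with a maximal coupling of $X$ and $Y$ with $\Pr[X \neq Y] \le 2\eps$, writes $\E[X-Y] = \E[(X-Y)\1_{X \neq Y}]$, applies Cauchy--Schwarz to get $\E[X-Y]^2 \le \left(\Var(X-Y) + \E[X-Y]^2\right)\cdot 2\eps$, bounds $\Var(X-Y) \le 2(\Var(X)+\Var(Y)) \le 4\sigma^2$, and then rearranges the self-referential term $\E[X-Y]^2$ to the left side to conclude $(1-2\eps)\E[X-Y]^2 \le 8\sigma^2\eps$. You instead use the equivalent mixture form of the same TV fact, $X \sim rR + (1-r)P$, $Y \sim rR + (1-r)Q$ with $1-r \le 2\eps$, and extract the bound from the between-component term of the law of total variance, $\sigma^2 \ge r(1-r)(\mu_R-\mu_P)^2$ and likewise for $Q$, combined via $a^2 + b^2 \ge \tfrac12(a-b)^2$; the constants come out identical, $2\sigma\sqrt{(1-r)/r} \le 2\sqrt{2\sigma^2\eps}/\sqrt{1-2\eps}$. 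Your version avoids the rearrangement step and uses the two variance hypotheses separately rather than through $\Var(X-Y)$, and it makes the extremal structure more transparent (the bound is tight when the non-common parts are point masses, matching the simplex-type lower-bound constructions); the paper's coupling-plus-Cauchy--Schwarz argument is marginally shorter and needs no discussion of the mixture decomposition in the non-atomic case, which you correctly flag as the only technical point to justify. Minor housekeeping for a polished write-up: note that the degenerate cases $r \in \{0,1\}$ are trivial (the claim is vacuous for $\eps \ge 1/2$ and immediate if the distributions coincide), and that the component means are finite because each component with positive weight inherits a finite second moment from the mixture.
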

\begin{proof}
  Couple $X$ and $Y$ so that $\Pr[X \neq Y] \leq 2 \eps$.  Then by Cauchy-Schwarz,
  \begin{align*}
    \E[X - Y]^2 &= \E[ (X - Y) 1_{X \neq Y}]\\
                &\leq \E[ (X - Y)^2] \E[1_{X \neq Y}^2]\\
                &\leq (\Var(X - Y) + \E[X - Y]^2)\cdot 2\eps.
  \end{align*}
  Canceling terms, and using that $\Var(X - Y) \leq 2 (\Var(X)  + \Var(Y)) \leq 4 \sigma^2$,
  \[
    (1 - 2 \eps) \E[X - Y]^2 \leq 8 \sigma^2 \eps
  \]
  giving the result.
\end{proof}

\robustupperthm*

\begin{proof}
  Given the corrupted input distribution $D'$, take the set of all
  possible distributions $X$ with $TV(X, D) \leq \eps$ and
  $\Var(X) \leq \sigma^2$, and look at the corresponding means.  Let
  $S$ denote the set of these candidate means.  We know that the
  uncorrupted distribution lies in the candidate set, so its mean lies
  in $S$.

  For any two distributions $X, Y$ in the candidate set, we have
  $TV(X, Y) \leq TV(X, D) + TV(D, Y) \leq 2 \eps$.  Therefore the
  same holds for any 1-dimensional projections $\inner{v, X}$; in
  particular, by Lemma~\ref{lem:robust1d},
  \[
    \norm{\E[X] - \E[Y]} = \max_{\norm{v} = 1} \E[\inner{v, X} - \inner{v, Y}] \leq \frac{2  \sqrt{2\sigma^2 \eps}}{\sqrt{1-2\eps}}
  \]
  so $S$ has diameter at most $\frac{2  \sqrt{2\sigma^2 \eps}}{\sqrt{1-2\eps}}$.

  Then Jung's theorem states that the circumcenter of $S$ has distance
  at most $JUNG_d \cdot \frac{\sqrt{2\sigma^2 \eps}}{\sqrt{1-2\eps}}$
  to each point in $S$, and in particular to the true mean.  Finally,
  given that $\eps \leq 0.3$,
  $\frac{1}{\sqrt{1-2\eps}} < 1 + 2\eps$.
\end{proof}

\section{Geometry Results}
\begin{theorem}[Jung's Theorem \citep{Jung1901}]
    Let $K \subset \R^d$ be a compact set and let $D = \max_{p, q \in K} \|p - q\|_2$ be the diameter of $K$. There exists a closed ball with radius
    \begin{align*}
        R \le D \sqrt{\frac{d}{2(d+1)}}
    \end{align*}
    that contains $K$. The boundary case of equality is obtained by the $d$-simplex.
\end{theorem}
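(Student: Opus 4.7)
The plan is to prove Jung's theorem via the standard minimum enclosing ball (MEB) argument. First, I would invoke compactness of $K$ to obtain the MEB with center $c$ and radius $R$ (existence follows from the fact that the function $c \mapsto \max_{p \in K} \|p - c\|$ is continuous, coercive, and convex, so has a unique minimum). The goal is to upper bound $R$ in terms of the diameter $D$, and the tightness of the bound should be witnessed by the regular $d$-simplex.

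The first substantive step is to identify the contact set $K^* = \{p \in K : \|p - c\| = R\}$ and argue that $c$ lies in the convex hull of $K^*$. The argument is by contradiction: if $c \notin \mathrm{conv}(K^*)$, then by hyperplane separation there is a unit vector $v$ and $\gamma > 0$ with $\langle v, p - c\rangle \geq \gamma$ for every $p \in K^*$. Shifting the center by $tv$ for small $t > 0$ strictly decreases $\|p - (c + tv)\|^2 = R^2 - 2t\langle v, p - c\rangle + t^2$ on $K^*$, and by a compactness/continuity argument also keeps $\|p - (c + tv)\| < R$ for all $p \in K$ on a neighborhood of $K^*$ and outside that neighborhood (where $\|p - c\| < R$ strictly), contradicting minimality of $R$. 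Then by Carathéodory's theorem applied in $\R^d$, I can extract at most $m \leq d+1$ contact points $p_1, \ldots, p_m$ and weights $\lambda_i \geq 0$ with $\sum_i \lambda_i = 1$ and $\sum_i \lambda_i (p_i - c) = 0$.

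The second step is the algebraic computation. Expand
\begin{align*}
0 \;=\; \Bigl\|\sum_i \lambda_i (p_i - c)\Bigr\|^2 \;=\; \sum_i \lambda_i^2 R^2 + \sum_{i \neq j} \lambda_i \lambda_j \langle p_i - c, p_j - c\rangle,
\end{align*}
use the polarization identity $\langle p_i - c, p_j - c\rangle = R^2 - \tfrac12 \|p_i - p_j\|^2$, and simplify (using $\sum_{i,j} \lambda_i \lambda_j = 1$) to obtain
\begin{align*}
R^2 \;=\; \tfrac12 \sum_{i \neq j} \lambda_i \lambda_j \|p_i - p_j\|^2 \;\leq\; \tfrac{D^2}{2}\Bigl(1 - \sum_i \lambda_i^2\Bigr).
\end{align*}
Finally, since $\sum_i \lambda_i = 1$ with $m \leq d+1$ terms, Cauchy–Schwarz gives $\sum_i \lambda_i^2 \geq 1/m \geq 1/(d+1)$, so $R^2 \leq \tfrac{D^2}{2} \cdot \tfrac{d}{d+1}$, which is the claimed bound.

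The main obstacle is the geometric lemma that $c$ lies in the convex hull of the contact set; the algebra afterwards is bookkeeping. For equality, note that every inequality is tight precisely when $m = d+1$, all $\lambda_i = 1/(d+1)$, and all pairwise distances equal $D$, i.e., the $p_i$ form a regular $d$-simplex with the MEB center at its centroid, recovering the stated equality case.
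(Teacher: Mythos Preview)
The paper does not prove Jung's theorem; it merely states it with a citation to \cite{Jung1901} and uses it as a black box. Your proof is the standard minimum-enclosing-ball argument and is correct: the key steps (center lies in the convex hull of the contact set, Carath\'eodory to extract at most $d+1$ points, the polarization identity combined with Cauchy--Schwarz on the weights) are all valid, and your equality analysis correctly identifies the regular simplex as the extremal case.
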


\begin{theorem}[Generalized Jung's Theorem \citep{generalized_Jung}]
    \label{thm:generalized_jung}
    Let $K \subset \R^d$ be a compact set, and let $R_i$ be the maximum circumradius of any $i$-dimensional projection of $K$. Then, for any $1 \le j \le i \le d$,
    \begin{align*}
        R_i \le \sqrt{\frac{i(j+1)}{j(i+1)}} \cdot R_j
    \end{align*}
\end{theorem}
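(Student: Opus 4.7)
The approach is to adapt the classical proof of Jung's theorem to handle $j$-dimensional projections by witnessing the circumradius via a small set of extreme points on the circumsphere. First, by projecting $K$ onto an $i$-dimensional subspace attaining $R_i$, I reduce WLOG to the case $K \subset \R^i$ with $R_i$ equal to the radius of its minimum enclosing ball. By the usual Carath\'eodory/Helly argument for minimum enclosing balls, there exist $k \le i+1$ points $p_1, \ldots, p_k \in K$ on the boundary sphere around the circumcenter such that the circumcenter lies in their convex hull. Translating so the circumcenter is the origin, one has $\|p_l\| = R_i$ and $\sum_l \lambda_l p_l = 0$ for some nonnegative weights $\lambda_l$ summing to $1$, yielding the key identity
\[
  \sum_{l, m} \lambda_l \lambda_m \|p_l - p_m\|^2 \;=\; 2 R_i^2.
\]

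Since $\{p_l\} \subseteq K$, it suffices to exhibit $j+1$ of these witness points whose circumradius inside their affine span is at least $R_i \sqrt{j(i+1)/(i(j+1))}$. The extremal configuration is the regular $i$-simplex inscribed in the circumsphere: its $j$-faces are regular $j$-simplices with exactly this circumradius, so the target inequality is tight. My plan is to average over $(j+1)$-element subsets weighted by $\prod_m \lambda_{l_m}$, use the identity above to show that some subset has sufficiently large weighted average pairwise squared distance, and then apply the classical Jung inequality inside the affine span of that subset to convert the pairwise-distance bound into a circumradius lower bound.

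The main obstacle will be getting the constant right. A direct random-subspace calculation using $\mathbb{E}_V[\|\pi_V(p)\|^2] = (j/i)\|p\|^2$ yields only a bound of order $R_i \sqrt{j/i}$, missing the crucial $(i+1)/(j+1)$ factor that arises precisely because $k \le i+1$ (analogous to how $2R^2 k/(k-1)$, rather than $2R^2$, is the correct lower bound on the diameter squared in classical Jung). Recovering this factor cleanly will require incorporating the balance constraint $\sum \lambda_l p_l = 0$ into the Cauchy--Schwarz step so that the ``$k \le i+1$'' degeneracy of the witness configuration is exploited, or, alternatively, a symmetrization argument showing that any perturbation away from the regular simplex only increases the maximum $j$-face circumradius. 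I expect the subset-averaging route to be more tractable; the symmetrization route is conceptually cleaner but technically delicate, since one must track the maximum over projection subspaces throughout the deformation.
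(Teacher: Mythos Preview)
The paper does not contain a proof of this theorem. It is stated in the ``Geometry Results'' appendix purely as a citation of an external result \cite{generalized_Jung}, alongside the classical Jung theorem, and is then invoked as a black box in the proof of Theorem~\ref{thm:heavy_upper}. There is therefore no ``paper's own proof'' to compare your proposal against.

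As for the proposal itself: it is a reasonable outline of how one might approach the result, and the reduction to $k\le i+1$ witness points on the circumsphere with the identity $\sum_{l,m}\lambda_l\lambda_m\|p_l-p_m\|^2=2R_i^2$ is the right starting point. But as you yourself flag, the proposal does not actually close the argument---the crucial step of recovering the sharp $(i+1)/(j+1)$ factor is left as two speculative plans (weighted subset averaging with a refined Cauchy--Schwarz, or a symmetrization/deformation to the regular simplex), neither of which is carried out. So this is a proof \emph{sketch} with the hard part still open, not a proof. If you want to supply a proof here, you would need to either execute one of those two routes in full or consult the cited reference for the original argument.
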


\end{document}